\newtheorem{theorem}{Theorem}[section]
\newtheorem{claim}[theorem]{Claim}
\newtheorem{corollary}[theorem]{Corollary}
\newtheorem{lemma}[theorem]{Lemma}
\newtheorem{question}[theorem]{Question}
\newtheorem*{claim*}{Claim}
\theoremstyle{definition}
\newtheorem{remark}[theorem]{Remark}
\newtheorem{definition}[theorem]{Definition}
\newtheorem*{definition*}{Definition}
\newcommand{\Itwo}{\text{\(\mathcal{I}_2\)}}
\newcommand{\Ithree}{\text{\(\mathcal{I}_3\)}}
\newcommand{\Ifour}{\text{\(\mathcal{I}_4\)}}
\newcommand{\N}{\mathbb N}
\newcommand{\F}{\mathcal F}
\newcommand{\w}{\omega}
\newcommand{\Tau}{\mathcal T}
\newcommand{\im}{\operatorname{im}}
\newcommand{\dom}{\operatorname{dom}}
\newcommand{\Sym}{\operatorname{Sym}}
\def\id{\operatorname{id}}
\newcommand{\set}[2]{\{#1:#2\}}
\newcommand{\makeset}[2]{\left\lbrace #1 \;\middle|\;
 \begin{tabular}{@{}l@{}}
   #2
  \end{tabular}
  \right\rbrace}
\author{S. Bardyla, L. Elliott, J. D. Mitchell, and Y. P\'eresse}
\address{S.~Bardyla: University of Vienna, Institute of Mathematics, Vienna, Austria}
\thanks{The research of the first named author was funded in whole by the Austrian Science Fund FWF [10.55776/ESP399].}
\email{sbardyla@gmail.com}
\address{L. Elliott: University of Binghamton, Department of Mathematics and Statistics, Binghamton, SUNY, USA}
\email{luna.elliott142857@gmail.com}
\address{J. D. Mitchell: University of St Andrews, School of Mathematics and Statistics, Scotland, UK}
\email{jdm3@st-andrews.ac.uk}
\address{Y. P\'eresse: University of Hertfordshire, School of Physics, Engineering and Computer Science, UK}
\email{y.peresse@herts.ac.uk}
\subjclass[2020]{20M18, 20M20, 22A15, 54H15}
\keywords{Symmetric inverse monoid, Baire space, Polish semigroup, poset of Polish topologies}
\title{Classifying the Polish semigroup topologies on the symmetric inverse monoid}
\begin{document}
    
    \begin{abstract}
    We classify all Polish semigroup topologies on the symmetric inverse monoid $I_\N$ on the natural numbers $\N$. This result answers a question of Elliott et al. There are countably infinitely many such topologies. Under containment, these Polish semigroup topologies form a join-semilattice with infinite descending chains, no infinite ascending chains, and arbitrarily large finite anti-chains. Also, we show that the monoid $I_\N$ endowed with any second countable $T_1$ semigroup topology is homeomorphic to the Baire space $\N^\N$.   
    \end{abstract}
\maketitle

\section{Introduction}

Recall that the \emph{full transformation monoid} $X ^ X$ on a set $X$ consists of all functions from $X$ to $X$ under the operation of composition of functions. The monoid $X^X$ plays the same role
in semigroup theory, as the symmetric group $S_X$ does for groups. That is, by an analogue of Cayley's Theorem
every semigroup embeds into some full transformation monoid. 

From a set-theoretic point of view, $X^X$ is just the Cartesian product of $|X|$ copies of $X$. Thus $X^X$ has a natural topology, namely the (Tychonoff) product topology arising from the discrete topology on each copy of $X$. This product topology, also known as the \emph{pointwise topology}, is compatible with the algebraic structure of $X ^ X$ in the following sense. The semigroup multiplication on $X^X$, seen as a function from $X
^X \times X^X$ to $X^X$, is continuous under the pointwise topology. This makes $X ^ X$, equipped with the pointwise topology, a \emph{topological semigroup}. The symmetric group $S_X$ is a subspace of $X^X$ and forms a \emph{topological group} under the subspace topology. That is, the multiplication map $(f,g) \mapsto fg$ and the inversion map $g\mapsto g^{-1}$ are continuous under the pointwise topology on $S_X$.

The pointwise topology on $X^X$ is of particular interest in the case when $X$ is countably infinite, say $X=\N=\{0,1,2,\dots\}$. In this case, $X ^ X=\N^\N$ is the so-called Baire space. The Baire space is a Polish
space, i.e. a separable completely metrizable topological space. The symmetric group $S_{\N}$ is a $G_{\delta}$ subspace of $\N ^ \N$, and hence is a Polish space also. Polish spaces and groups are widely studied in descriptive set theory, see~\cite{Clas, YBM, CK, main1, GP, K, K1, K2, KM, Kechris, Mon, R}. 
The intrinsicness between the topology of the Baire space and the algebraic structures of the monoid $\N ^
{\N}$ and the group $S_{\N}$ goes further.
The Baire space topology is
the unique Polish semigroup topology on $\N ^ \N$~\cite[Theorem 5.4]{main}; and, similarly, the subspace topology inherited from the Baire space is the 
unique Polish group topology on $S_{\N}$ \cite{Gaughan, KR, RS}. For more about Polish topologies on groups and semigroups and their relation to automatic continuity see~\cite{our, BP, BE,  D, main, HHLS, L, M, Per, PS, PS2, Sa, Tr, T,Y}.

Inverse semigroups are semigroups
satisfying a ``local inverses'' axiom: for every $x\in S$ there exists a unique $x^{-1}\in S$ such that $xx^{-1}x = x$ and $x^{-1}xx^{-1} = x^{-1}$. Being a natural generalization of groups and semilattices, inverse semigroups have been widely studied in the literature; see the monographs~\cite{howie1995fundamentals, Lawson1998, Petrich1984} and references therein. An inverse semigroup $S$ equipped with a topology $\tau$ is called a {\em topological inverse semigroup} if both multiplication and inversion are continuous in $(S,\tau)$. 
The {\em symmetric inverse monoid} $I_X$ consists of all bijections between subsets of $X$ under the usual composition of binary relations.
By the Wagner-Preston Theorem~\cite[Theorem 5.1.7]{howie1995fundamentals}, $I_X$ plays the same role for the class of inverse semigroups as $X ^ X$ for semigroups, or $S_X$ for groups. 
It was shown in \cite[Theorem 5.15]{main} that, analogously to $\N^\N$ and $S_\N$, the symmetric inverse monoid $I_{\N}$ possesses a unique Polish inverse semigroup topology, denoted $\Ifour$ in \cite{main}. The reason for the subscript in the name is that $\Ifour$ is not the only Polish semigroup topology on $I_{\N}$. Whereas a classical result by Montgomery \cite{Mon} implies that every Polish semigroup topology on a group is a group topology, the analogous statement is not true for inverse semigroups. The authors of \cite{main} found two further Polish semigroup topologies $\Itwo$ and $\Ithree$ on $I_\N$. The topologies $\Itwo, \Ithree$ and $\Ifour$ are defined as follows:

For every $x, y\in \N$ let,
\begin{equation*}
    U_{x, y}= \set{h\in I_{\N}}{(x,y)\in h},\quad\hbox{ } W_x = \set{h\in I_{\N}}{ x\notin
    \dom(h)},\quad\hbox{ } W_x^ {-1} =\set{h\in I_{\N}}{x\notin \im(h)}.
\end{equation*}
Then a subbasis for $\Itwo$ is 
\begin{equation}\label{definition_Itwo}
\set{U_{x,y}}{x,y\in \N}\cup\{W_x:x\in\N\};
\end{equation}
a subbasis for $\Ithree$ is
$$\set{U_{x,y}}{x,y\in \N}\cup\{W^{-1}_x:x\in\N\};$$
and $\Ifour$ is the topology generated by $\Itwo \cup \Ithree$.

It was also shown in \cite{main} that every Polish semigroup topology for
$I_{\N}$ is contained in $\Ifour$ and contains either $\Itwo$ or $\Ithree$ which
led the authors to ask the following question:

\begin{question}[{\cite[Question 5.17]{main}}]\label{q2}
Are $\Itwo$, $\Ithree$, and $\Ifour$ the only Polish semigroup topologies on $I_{\N}$?
\end{question}

In this paper, we answer \cref{q2} by classifying all Polish semigroup
topologies on the symmetric inverse monoid $I_\N$; see \cref{tau}, \cref{definition-waning},
and \cref{main-theorem}. In
particular, there are countably many such semigroup topologies, forming a
join-semilattice with: infinite descending chains, no infinite ascending chains,
and arbitrarily long finite anti-chains; see \cref{corollary-order}. 
If $\omega$
denotes the first infinite ordinal, then each such Polish semigroup topology is
characterised by a special kind of function $f:\omega +
1\to \omega+1$ that we will refer to as \emph{waning}; see
\cref{definition-waning}. The order on the join-semilattice of the Polish
semigroup topologies on $I_\N$ is also completely characterised in terms of the
corresponding waning functions, see \cref{newversion}.
Even though $I_{\N}$ can be made into a Polish semigroup in infinitely many different ways, we always obtain the Baire space.
More precisely, we show that the symmetric inverse monoid $I_\N$ endowed with any \(T_1\) second countable semigroup topology is homeomorphic to the Baire space $\N^\N$, see~\cref{newone}.

This paper is organised as follows: in \cref{section2} we provide the necessary
definitions, and then state the main theorems. In
\cref{section-waning-func-to-topology} we show how to associate a Polish
semigroup topology on $I_{\N}$ to every waning function and, conversely, in
\cref{section-from-topologies-to-waning} we show that every Polish semigroup
topology on $I_{\N}$ equals one of the topologies from
\cref{section-waning-func-to-topology}. In \cref{section-corollaries} we prove the remaining main results. 

\section{Main results}\label{section2}

Following usual conventions in Set Theory, we identify the set of natural numbers $\N$ with the first infinite ordinal $\omega=\{0,1,2, \dots\}$ and a natural number $n$ with the ordinal $\{0, \dots, n-1\}$.
The topologies on $I_{\N}$ arising from functions from the successor ordinal $\omega + 1$ of $\omega$ to itself are defined as follows.
\begin{definition}[\textbf{The topology $\mathcal{T}_f$}] \label{tau}
Suppose that \(f:\omega + 1 \to \omega + 1\) is any function. We define
$\mathcal{T}_f$ to be the least topology on $I_{\N}$ containing $\Itwo$ and the
sets of the form:
\[
U_{f, n, X}= \makeset{g\in I_\N}{\(|\im(g) \setminus X|\geq n\) and \(|X\cap \im(g)|\leq (n)f\)}\]
where \(n\in \N\) and \(X\subset \N\) is finite.   
\end{definition}

Roughly speaking, the open sets \(U_{f, n, X}\) consist of those elements of $I_{\N}$ which try not to map any points into \(X\). 
An element \(g\in U_{f, n, X}\) must map at least $n$ points outside of $X$ and is allowed at most $(n)f$ mistakes, i.e. points in the image of $g$ in $X$. 

It will transpire that every function $f: \omega + 1 \to \omega + 1$ gives rise to a Polish semigroup topology $\Tau_f$ on $I_{\N}$ (\cref{theorem-waning-function-to-Polish-topology}). However, 
distinct functions can give rise to the same topology. To circumvent this, we require the following definition.

\begin{definition}[\textbf{Waning function}]\label{definition-waning}
  We say that a non-increasing function \(f:\omega + 1\to \omega + 1\) is
  \textit{waning}  if either: $f$ is constant with value $\omega$; or there exists
  \(i \in \omega\) such that $(i)f \in \omega$, and \((j+1)f< (j)f\) for all $j
  \in \omega$ such that  \(0\neq (j)f \in \omega\).
\end{definition}

For example, the constant function $\overline{0}$ with value $0$ is a waning function, 
and so too is the following function:
\begin{equation*}
(x)f = 
\begin{cases}
\omega & \text{if } x \leq 42\\
1337 - x & \text{if } 42 \leq x < 69 \\
0 & \text{if } x \geq 69.
\end{cases}
\end{equation*}

 If $f$ is a waning function, continuing to speak roughly, partial functions in
 $U_{f, n, X}$ defined on more points are allowed fewer mistakes. 

For a topology $\Tau$ on $I_\N$ let $\Tau^{-1}=\{U^{-1}:U\in\Tau\}$. It is
routine to verify that if $\Tau$ is a (Polish) semigroup topology on $I_\N$, then so is
$\Tau^{-1}$. The following result answers \cref{q2} and is the main result of this paper.

\begin{theorem}\label{main-theorem}
  If $\Tau$ is a \(T_1\) second countable semigroup topology on \(I_\N\), then
  there exists a waning function $f$ such that either $\Tau=\Tau_f$, or
  $\Tau^{-1}=\Tau_f$. 
  
  Conversely, if $f$ and $g$ are distinct waning functions, then 
  $\Tau_f$ and $\Tau_g$ are distinct Polish semigroup topologies for $I_{\N}$.
\end{theorem}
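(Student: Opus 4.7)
The plan is to combine the two main constructions of this paper: \cref{theorem-waning-function-to-Polish-topology} yields that $\Tau_f$ is a Polish semigroup topology for every waning function $f$, and the machinery of \cref{section-from-topologies-to-waning} is used to extract a waning function from a given topology. Starting from a $T_1$ second countable semigroup topology $\Tau$ on $I_\N$, I would first invoke (the appropriate extension to this setting of) the dichotomy from \cite{main} to conclude $\Itwo\subseteq\Tau\subseteq\Ifour$ or $\Ithree\subseteq\Tau\subseteq\Ifour$. Since $\Tau^{-1}$ is also a $T_1$ second countable semigroup topology and inversion swaps $\Itwo$ with $\Ithree$, replacing $\Tau$ by $\Tau^{-1}$ if necessary reduces to the case $\Itwo\subseteq\Tau\subseteq\Ifour$.

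Next, I would define $f:\omega+1\to\omega+1$ by
\[
(n)f := \min\{k\in\omega+1 : U_{n,k,X}\in\Tau \text{ for every finite } X\subseteq\N\},
\]
where $U_{n,k,X}:=\{g\in I_\N : |\im(g)\setminus X|\geq n \text{ and } |X\cap\im(g)|\leq k\}$. This minimum exists in $\omega+1$ because $U_{n,\omega,X}=\{g:|\im(g)\setminus X|\geq n\}$ already lies in $\Itwo\subseteq\Tau$, and monotonicity in $n$ shows $f$ is non-increasing. To prove $f$ is waning one must verify strict descent wherever $(j)f\in\omega\setminus\{0\}$: if hypothetically $(j)f=(j+1)f=k>0$, then composing suitable elements of $U_{j+1,k,X}$ with a carefully chosen partial bijection that shifts one extra image-point into $X$ should, by continuity of multiplication, yield open sets in $\Tau$ witnessing $(j)f<k$, a contradiction. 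The inclusion $\Tau_f\subseteq\Tau$ is then immediate from the definition of $f$ and $\Itwo\subseteq\Tau$. The main obstacle is the reverse inclusion $\Tau\subseteq\Tau_f$: one must argue that any subbasic open set of $\Tau$ not already in $\Tau_f$ would, via further multiplication manipulations, permit a strict decrease of some value of $f$, contradicting minimality. I expect this step to require delicate case analysis depending on how $(n)f$ relates to $\omega$ and to $(n-1)f$.

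Finally, for the converse direction, suppose $f\neq g$ are distinct waning functions and pick $n$ with $(n)f\neq(n)g$, say $(n)f<(n)g$. Since $(n)f<\omega$, choose a finite $X\subseteq\N$ with $|X|=(n)f+1$; then $U_{f,n,X}$ is a subbasic open set of $\Tau_f$. To show $U_{f,n,X}\notin\Tau_g$, I would exhibit a net inside $U_{f,n,X}$ converging in $\Tau_g$ to a partial bijection whose image meets $X$ in strictly more than $(n)f$ points, exploiting that $\Tau_g$ only constrains $|X\cap\im(g)|$ to cardinality $(n)g>(n)f$. This yields $\Tau_f\neq\Tau_g$, and together with \cref{theorem-waning-function-to-Polish-topology} completes the proof.
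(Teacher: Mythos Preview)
Your reduction to the case \(\Itwo\subseteq\Tau\subseteq\Ifour\) via the dichotomy from \cite{main} and the observation that \(\Tau\mapsto\Tau^{-1}\) swaps \(\Itwo\) with \(\Ithree\) matches the paper exactly, and your definition of \((n)f\) as the least \(k\) with \(U_{n,k,X}\in\Tau\) for all finite \(X\) is morally the same invariant the paper extracts (compare \cref{de-wan-func}).

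The genuine gap is the step \(\Tau\subseteq\Tau_f\). You describe it as ``delicate case analysis'' on ``subbasic open sets of \(\Tau\)'', but \(\Tau\) is an \emph{arbitrary} second countable semigroup topology: you have no subbasis for it, and no a priori reason why its open sets should be expressible in terms of the sets \(U_{n,k,X}\) at all. The paper's route is structurally different and substantially harder than your outline anticipates. It first shows (\cref{wany}) that the neighbourhood filter of \(\varnothing\) in any such \(\Tau\) has a base of \emph{wany} sets \(N_{n,\mathfrak{Y}}\), where \(\mathfrak{Y}\subseteq[\N]^{<\infty}\) may be infinite. Upgrading ``wany'' to ``finitely wany'' (which is what is needed to land in a \(\Tau_f\)) is the crux: this is done by a Baire category argument in \(\Sym(\N)\) (\cref{no bad} and \cref{bad_don't_exist}), ruling out hypothetical ``bad'' families \(\mathfrak{Y}\). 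Nothing in your sketch suggests this mechanism, and the ``multiplication manipulations'' you propose do not obviously produce it; continuity of multiplication alone does not prevent \(\Tau\) from having neighbourhoods of \(\varnothing\) encoded by genuinely infinite families \(\mathfrak{Y}\).

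A smaller point on the converse: your proposed net argument is in the wrong direction. A net in \(U_{f,n,X}\) converging in \(\Tau_g\) to a point \emph{outside} \(U_{f,n,X}\) shows that \(U_{f,n,X}\) is not \(\Tau_g\)-closed, not that it fails to be \(\Tau_g\)-open. You need instead a point \(h\in U_{f,n,X}\) and a net \emph{outside} \(U_{f,n,X}\) converging to \(h\) in \(\Tau_g\); the paper does essentially this in \cref{lem-order-preserving}, working at \(h=\id_n\) and using the neighbourhood basis of \cref{neighbourhood_basis}. Also note that taking \(|X|=(n)f+1\) is too small to be useful: with such \(X\) the constraint \(|X\cap\im(g)|\le(n)g\) is vacuous whenever \((n)g\ge(n)f+1\), so you will not detect the difference. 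The paper instead takes \(X\) large (of size \(b>n\) with \(b-n\) between \((n)f\) and \((n)g\)) and builds an explicit element distinguishing \(W_{f,\id_n,b}\) from every \(W_{g,\id_n,r}\).
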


Clearly, there are only countably many waning functions, and so \cref{main-theorem} has the
following corollary.

\begin{corollary}
The symmetric inverse monoid has countably infinitely many distinct Polish semigroup topologies.
\end{corollary}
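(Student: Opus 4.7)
The plan is to apply \cref{main-theorem} directly. By that theorem, every Polish semigroup topology on $I_\N$ has the form $\Tau_f$ or $\Tau_f^{-1}$ for some waning function $f$, and distinct waning functions yield distinct topologies $\Tau_f$. Hence the cardinality of the set of Polish semigroup topologies on $I_\N$ is at least that of the set of waning functions, and at most twice it. So it suffices to show that there are countably infinitely many waning functions.

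For the upper bound, I would argue that a waning function is determined by finite combinatorial data. Indeed, suppose $f$ is waning and not the constant function $\omega$. By \cref{definition-waning} there is a least $i \in \omega$ with $(i)f \in \omega$; since $f$ is non-increasing, $(j)f \in \omega$ for all $j \geq i$; and the strictness clause forces the values to decrease strictly while positive and finite, so after at most $(i)f$ further steps $f$ reaches $0$ and stays $0$ (in particular $(\omega)f = 0$). Thus $f$ is specified by the single index $i$ and the finite strictly decreasing sequence $(i)f > (i+1)f > \cdots > 0$ of its nonzero finite values. There are countably many such choices, and adding the single constant-$\omega$ function, there are countably many waning functions in total.

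For the lower bound, I would exhibit an explicit infinite family. For each $n \in \omega$, define $f_n\colon \omega + 1 \to \omega + 1$ by $(x)f_n = \omega$ for $x < n$ and $(x)f_n = 0$ for $n \leq x \leq \omega$. Each $f_n$ is non-increasing, and the waning condition on positive finite values holds vacuously since $f_n$ takes no positive finite values. Hence each $f_n$ is waning, and clearly the $f_n$ are pairwise distinct. The second assertion of \cref{main-theorem} then yields pairwise distinct topologies $\Tau_{f_n}$, providing infinitely many Polish semigroup topologies on $I_\N$.

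There is no real obstacle here beyond invoking \cref{main-theorem}: the corollary is a short counting consequence of the classification, with all the mathematical difficulty already absorbed into the main theorem.
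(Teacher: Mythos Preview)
Your proposal is correct and follows the same approach as the paper: the corollary is stated there as an immediate consequence of \cref{main-theorem} together with the (asserted as ``clear'') fact that there are only countably many waning functions. You have simply written out the details the paper omits---the finite-data argument for the upper bound and an explicit infinite family for the lower bound---so there is nothing to add.
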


The remaining main results are less immediate consequences of \cref{newversion} and will be proved in \cref{section-corollaries}.
As noted earlier, every $T_1$ second-countable topology for $I_{\N}$ is Polish. In fact, up to homeomorphism, there is only one such topology.

\begin{theorem}\label{newone}
The symmetric inverse monoid $I_\N$ endowed with any \(T_1\) second countable semigroup topology is homeomorphic to the Baire space $\N^\N$.
\end{theorem}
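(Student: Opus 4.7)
By \cref{main-theorem}, every $T_1$ second countable semigroup topology on $I_{\N}$ is of the form $\Tau_f$ or $\Tau_f^{-1}$ for some waning $f$. The involution $g \mapsto g^{-1}$ is a homeomorphism $(I_{\N}, \Tau_f^{-1}) \to (I_{\N}, \Tau_f)$, since the preimage of a $\Tau_f$-open set $U$ under inversion is $U^{-1} \in \Tau_f^{-1}$ by the very definition of $\Tau^{-1}$; it therefore suffices to prove $(I_{\N}, \Tau_f) \cong \N^{\N}$ for every waning $f$. My plan is to apply the Alexandrov--Urysohn characterisation of the Baire space: a non-empty Polish space is homeomorphic to $\N^{\N}$ if and only if it is zero-dimensional and nowhere locally compact. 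That $(I_{\N}, \Tau_f)$ is Polish follows from \cref{theorem-waning-function-to-Polish-topology}, so only zero-dimensionality and nowhere local compactness remain.

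For nowhere local compactness I would exploit that $\Itwo \subseteq \Tau_f$. Given a point $g$ in a basic $\Tau_f$-open set $V = \bigcap_i U_{x_i,y_i} \cap \bigcap_j W_{x_j'} \cap \bigcap_k U_{f,n_k,X_k}$, choose $x_0 \in \N \setminus (\dom(g) \cup \{x_i\} \cup \{x_j'\})$ and distinct values $b_n \to \infty$ outside $\im(g) \cup \bigcup_k X_k$, and set $h_n := g \cup \{(x_0, b_n)\}$. Each $h_n$ lies in $V$, but $h_n(x_0) = b_n$ is eventually distinct from every fixed element of the discrete space $\N \cup \{*\}$, so no subsequence of $(h_n)$ converges in $\Itwo$, and a fortiori none converges in the finer $\Tau_f$. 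Thus $V$ cannot be contained in any compact set, so $(I_{\N}, \Tau_f)$ is nowhere locally compact.

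Zero-dimensionality is the crux and, I expect, the main obstacle. The sub-basic sets $U_{x,y}$ and $W_x$ are already clopen in $\Itwo \subseteq \Tau_f$, so everything reduces to producing, for each $g \in U_{f, n, X}$, a clopen $\Tau_f$-neighbourhood of $g$ inside $U_{f, n, X}$. When $f$ is constantly $\omega$ one has $\Tau_f = \Itwo$ and $U_{f,n,X}$ is visibly a union of basic $\Itwo$-clopens. Otherwise let $N$ be the least integer with $(N)f = 0$; then $U_{f, N, Y}$ forces $\im(h) \cap Y = \emptyset$ for any finite $Y$, and intersecting this with $\Itwo$-clopens that pin down $g$ on $g^{-1}(X \cap \im(g))$ and on $n$ domain points whose $g$-images witness $\im(g)\setminus X$ ought to yield a clopen neighbourhood inside $U_{f, n, X}$, at least when $|\im(g)| \geq N$. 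The genuinely delicate case is when $|\im(g)| < N$: the condition $|X \cap \im(h)| \leq (n)f$ is not $\Itwo$-closed and cannot be captured by a single $U_{f, N, \cdot}$ once $g$'s image is too small. Handling this requires a case analysis along the staircase of values of $f$ below $N$, substituting $U_{f, n', Y}$ with $n' < N$ carefully chosen against the exact value of $(n')f$; this is precisely where the full waning hypothesis — strict decrease on positive finite values, not merely non-increase — is indispensable, and is where I anticipate the proof is most intricate.
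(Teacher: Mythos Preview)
Your overall strategy---reduce to $\Tau_f$ via \cref{main-theorem} and the inversion homeomorphism, then invoke the Alexandrov--Urysohn characterisation---is exactly what the paper does, and your nowhere-local-compactness argument is correct. (The paper phrases that part as ``$\Itwo$-compact sets have empty $\Ifour$-interior'', which suffices since $\Itwo\subseteq\Tau_f\subseteq\Ifour$; your sequence argument is an equivalent way of saying the same thing.)

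The gap is in zero-dimensionality. You call this ``the crux'' and leave the case $|\im(g)|<N$ as an unfinished ``genuinely delicate'' staircase analysis on the values of $f$. But no such analysis is needed, and in fact the result is already available to you: \cref{Polish} states that $(I_\N,\Tau_f)$ has a basis of clopen sets for \emph{every} $f\in(\omega+1)^{\omega+1}$, waning or not. The mechanism, from \cref{newlabel1}, is not to search for a clopen neighbourhood of $g$ of the form $U_{f,n',Y}$ intersected with $\Itwo$-clopens, as you attempt, but rather to observe that each $U_{f,n,X}$ itself decomposes as a countable union of $\Itwo$-closed pieces $U_{f,n,X}\cap U_m$, where the $U_m$ are $\Itwo$-clopen sets covering $\{g:|\im(g)\setminus X|\geq n\}$. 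Indeed, if $g\in U_m\setminus U_{f,n,X}$ then $|X\cap\im(g)|>(n)f$, and pinning down the finitely many points of $(X\cap\im(g))g^{-1}$ gives an $\Itwo$-open neighbourhood of $g$ disjoint from $U_{f,n,X}$. The waning hypothesis plays no role whatsoever in this step, so your anticipated reliance on strict decrease of $f$ is misplaced. Your proposal becomes a complete proof simply by replacing the final paragraph with a citation of \cref{Polish}.
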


As discussed in the introduction, we now address the question of how the Polish semigroup topologies on $I_{\N}$ are ordered with respect to containment.  
If $f, g
\in (\omega + 1)^{\omega+1}$, we write $f \preceq g$ whenever $(i)f \geq (i)g$
for all $i\in \omega + 1$.  The reason that we order $f\preceq g$ when $f$ is
coordinatewise greater than $g$ will become apparent shortly. We denote by
$\mathfrak{W}$ the set of all waning functions on $\omega + 1$ with the partial
order $\preceq$.


Recall that, by \cite{main}, every $T_1$ second-countable semigroup topology on $I_{\N}$ contains $\Itwo$ or $\Ithree=\Itwo^{-1}$ and is contained in $\Ifour=\Ifour^{-1}$. It follows that the entire poset $\mathfrak{P}$ of Polish semigroup topologies on $I_{\N}$, with respect to containment, is described by the following theorem. 

\begin{theorem}\label{newversion}
{\em (i)} The function $f \mapsto \mathcal{T}_f$ is an order-isomorphism from $\mathfrak{W}$
to the subposet of $\mathfrak{P}$ consisting of topologies which contain $\Itwo$ and are contained in $\Ifour$.

{\em (ii)} the function $f \mapsto \mathcal{T}_f ^ {-1}$ is an order-isomorphism from
$\mathfrak{W}$ to the subposet of $\mathfrak{P}$ of topologies lying between $\Ithree$ and $\Ifour$.
\end{theorem}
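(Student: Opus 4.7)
The plan is to leverage \cref{main-theorem}, which already tells us that $f \mapsto \mathcal{T}_f$ is injective and that every Polish semigroup topology on $I_\N$ equals $\mathcal{T}_f$ or $\mathcal{T}_f^{-1}$ for a unique waning $f$. The remaining content of \cref{newversion} then splits into three tasks: (a) identifying the image of $f \mapsto \mathcal{T}_f$ with the topologies lying between $\Itwo$ and $\Ifour$; (b) proving the equivalence $f \preceq g \Leftrightarrow \mathcal{T}_f \subseteq \mathcal{T}_g$; and (c) deducing (ii) from (i) via the order-preserving involution $\Tau \mapsto \Tau^{-1}$, which swaps $\Itwo$ with $\Ithree$ and fixes $\Ifour$.

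For (a), the inclusion $\Itwo \subseteq \mathcal{T}_f$ is immediate by definition, and $\mathcal{T}_f \subseteq \Ifour$ follows by writing each $U_{f, n, X}$ as a finite union, over subsets $Y \subseteq X$ with $|Y| \leq (n)f$, of sets of the form $\{h : X \cap \im(h) = Y,\ |\im(h) \setminus X| \geq n\}$; each such set is $\Ifour$-open, combining the $\Itwo$-open conditions ``$y \in \im(h)$'' for $y \in Y$, the $\Ithree$-open conditions ``$x \notin \im(h)$'' for $x \in X \setminus Y$, and the $\Itwo$-open image-count condition.

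The crux is the forward direction of (b). Fix $h \in U_{f, n, X}$; set $Y := X \cap \im(h)$, $j := |Y|$, and $i := |\im(h) \setminus X|$, so $j \leq (n)f$ and $i \geq n$. Choose distinct $b_1, \ldots, b_n \in \dom(h)$ with $h(b_t) \in \im(h) \setminus X$, take $m := n + j$, and consider
\[
V := \bigcap_{y \in Y} U_{h^{-1}(y),\, y} \;\cap\; \bigcap_{t=1}^n U_{b_t,\, h(b_t)} \;\cap\; U_{g,\, m,\, X \setminus Y}.
\]
Then $V$ is a basic open neighborhood of $h$ in $\mathcal{T}_g$, since $|\im(h) \setminus (X \setminus Y)| = i + j \geq m$, and every $h' \in V$ has $|\im(h') \setminus X| \geq n$ (from the fixings outside $X$) and $|X \cap \im(h')| \leq j + (m)g$. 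The remaining inequality $j + (m)g \leq (n)f$ I would prove by cases: if $(n)f = \omega$ the set $U_{f, n, X}$ is already $\Itwo$-open and the claim is trivial; otherwise $(n)g \leq (n)f$ is finite, and the strict-decrease clause of waning yields $(n+j)g \leq \max\{(n)g - j,\ 0\} \leq (n)f - j$, using both $f \preceq g$ and $j \leq (n)f$.

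For the reverse direction of (b), I argue by contrapositive: if $(n_0)f < (n_0)g$ for some $n_0$, then $U_{f, n_0, X}$ lies in $\mathcal{T}_f \setminus \mathcal{T}_g$ for a finite $X$ with $|X|$ just exceeding $(n_0)f$. The plan is to pick $h \in U_{f, n_0, X}$ hitting exactly $(n_0)f$ points of $X$ with $|\im(h) \setminus X| = n_0$, and to show that every basic $\mathcal{T}_g$-neighborhood of $h$ admits a partial bijection $h'$ hitting more than $(n_0)f$ points of $X$, exploiting the slack $(n_0)g > (n_0)f$ tolerated by each $U_{g, m, Y}$-constraint in the neighborhood. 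I expect this to be the most delicate step, since one must work across all possible basic $\mathcal{T}_g$-neighborhoods and simultaneously respect the $\Itwo$-fixings, domain-exclusions, and cardinality bounds from multiple $U_{g, m, Y}$ generators; the waning condition on both $f$ and $g$ should provide the required flexibility at index $n_0$.
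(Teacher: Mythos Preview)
Your decomposition into (a), (b), (c) is exactly right, and your argument for (a) and for part~(c) is fine. The substantive difference from the paper lies in how (b) is handled.

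In the paper, the order equivalence $f\preceq g \Leftrightarrow \mathcal{T}_f\subseteq\mathcal{T}_g$ is already established as \cref{lem-order-preserving}, proved \emph{not} from the subbasic sets $U_{f,n,X}$ but from the neighbourhood basis $\{W_{f,g,r}\}$ of \cref{neighbourhood_basis}. With that basis in hand, both directions become almost immediate: for $(ii)\Rightarrow(i)$ one simply observes $W_{g,h,r}\subseteq W_{f,h,r}$ whenever $(|h|)g\leq(|h|)f$, and for $(i)\Rightarrow(ii)$ one compares $W_{f,\id_n,b}$ with $W_{g,\id_n,r}$ at a single finite element $\id_n$. The proof of \cref{newversion} itself is then a two-line citation of \cref{lem-order-preserving}, \cref{newcol}, and \cref{main-theorem}.

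Your approach instead works directly with the subbasic sets $U_{f,n,X}$. Your forward direction is correct and complete: the neighbourhood $V$ you build does the job, and the inequality $j+(n+j)g\leq (n)f$ follows exactly as you argue from the strict-decrease clause. This is a genuinely more elementary argument, avoiding the intermediate machinery of \cref{lemma-new}, \cref{much_wan}, and \cref{neighbourhood_basis}. The trade-off is visible in your reverse direction: without the $W$-basis, showing that $U_{f,n_0,X}\notin\mathcal{T}_g$ forces you to rule out \emph{every} basic $\mathcal{T}_g$-neighbourhood, which, as you acknowledge, means dealing with arbitrary finite intersections of $\Itwo$-generators and several $U_{g,m,Y}$ constraints simultaneously. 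This can be done (and your choice of $h$ with $|\im(h)\setminus X|=n_0$ and $|X\cap\im(h)|=(n_0)f$ is the right starting point), but the paper's route via the canonical basis $W_{g,\id_{n_0},r}$ collapses this to a single comparison and is substantially cleaner.
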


The reason for defining $\preceq$ as we did, should now be clear: to avoid
having an ``anti-embedding'' and ``anti-isomorphism'' in the statement of
\cref{newversion}. 

\begin{theorem}\label{corollary-order}
  The partial order of Polish semigroup topologies on $I_{\N}$ contains the following:
  \begin{enumerate}[\rm (a)]
  \item infinite descending linear orders;
  \item only finite ascending linear orders; and 
  \item every finite partial order.
  \end{enumerate}
\end{theorem}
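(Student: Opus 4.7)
The plan is to reduce everything via \cref{newversion}, which identifies $\mathfrak{P}$ with two copies of $(\mathfrak{W}, \preceq)$ glued at $\Ifour$; so each conclusion will be proved by producing the requisite sub-structure inside $(\mathfrak{W}, \preceq)$. Recall that $f \preceq g$ means $(i)f \geq (i)g$ for all $i \in \omega + 1$.

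For (a), we exhibit an explicit infinite descending chain in $\mathfrak{W}$. For each $k \geq 1$ define $g_k$ by $(i)g_k = \max(k - i, 0)$ for $i \in \omega$, and $(\omega)g_k = 0$. The strictly decreasing finite tail $k, k-1, \ldots, 1, 0$ makes each $g_k$ waning, and $(i)g_{k+1} \geq (i)g_k$ for all $i$ with strict inequality at $i = 0$. Hence $g_{k+1} \prec g_k$, which under the order-isomorphism of \cref{newversion} yields the infinite descending chain $\Tau_{g_1} \supsetneq \Tau_{g_2} \supsetneq \cdots$ in $\mathfrak{P}$.

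For (b), the plan is to assign to each $f \in \mathfrak{W}$ an ordinal rank that strictly decreases along any $\subseteq$-ascending chain in $\mathfrak{P}$. Let $i_0(f) \in \omega + 1$ be the least $i$ with $(i)f \in \omega$, with the convention $i_0(\overline{\omega}) = \omega$, and set
\[
\rho(f) \;=\; \omega \cdot i_0(f) \;+\; \sum_{i \in \omega,\, i \geq i_0(f)} (i)f.
\]
The sum is finite: it is empty if $f = \overline{\omega}$, and otherwise the waning condition makes $(i_0(f))f > (i_0(f)+1)f > \cdots$ strictly decrease to $0$ in finitely many steps. Suppose $f \prec g$ in $\mathfrak{W}$ (equivalently $\Tau_f \subsetneq \Tau_g$). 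Since $f$ is coordinatewise $\geq g$, every index where $(i)g = \omega$ also satisfies $(i)f = \omega$, so $i_0(g) \leq i_0(f)$. If this is strict, then $\rho(f) \geq \omega \cdot i_0(f) \geq \omega \cdot i_0(g) + \omega > \rho(g)$. If $i_0(f) = i_0(g)$, the strict coordinatewise inequality lies in the finite tail, so $\sum_{i \geq i_0(f)} (i)f > \sum_{i \geq i_0(g)} (i)g$, again giving $\rho(f) > \rho(g)$. An infinite ascending chain in $\mathfrak{P}$ would thus yield an infinite strictly descending sequence of ordinals, contradicting well-foundedness.

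For (c), it suffices to embed an arbitrary finite poset $P$ into $(\mathfrak{W}, \preceq)$. Enumerating $P = \{q_1, \ldots, q_n\}$, the characteristic vector of the principal down-set gives the standard order-embedding $p \mapsto \mathbf{a}(p)$ of $P$ into $(\{0,1\}^n, \leq)$, where $a_k(p) = 1$ iff $q_k \leq_P p$. It remains to embed $(\{0,1\}^n, \leq)$ into $(\mathfrak{W}, \preceq)$. For $\mathbf{a} = (a_1, \ldots, a_n) \in \{0,1\}^n$ we define
\[
(i)f_{\mathbf{a}} \;=\; \begin{cases} (1 - a_k) + 2(n - k) & \text{if } i = k - 1 \text{ for some } 1 \leq k \leq n, \\ 0 & \text{if } i \geq n, \end{cases}
\]
and $(\omega)f_\mathbf{a} = 0$. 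The successive differences $(k-1)f_\mathbf{a} - (k)f_\mathbf{a} = (a_{k+1} - a_k) + 2 \geq 1$ for $1 \leq k < n$, together with $(n-1)f_\mathbf{a} = 1 - a_n \geq 0 = (n)f_\mathbf{a}$, show that $f_\mathbf{a}$ is strictly decreasing on its nonzero support, hence waning. Moreover $\mathbf{a} \leq \mathbf{b}$ iff $1 - a_k \geq 1 - b_k$ for every $k$ iff $f_\mathbf{a}$ is coordinatewise $\geq f_\mathbf{b}$ iff $f_\mathbf{a} \preceq f_\mathbf{b}$, so $\mathbf{a} \mapsto f_\mathbf{a}$ is the desired order-embedding.

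The part requiring the most care is (c): one must fit $n$ independent bits of information into the rigid staircase shape of a waning function, but spacing the plateaus at intervals of $2$ leaves exactly enough room for a $\pm 1$ fluctuation to encode each coordinate $a_k$ while preserving strict monotonicity.
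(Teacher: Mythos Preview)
Your proof is correct and follows essentially the same approach as the paper: reduce to $(\mathfrak{W},\preceq)$ via \cref{newversion}, exhibit an explicit descending chain for (a), embed $\{0,1\}^n$ into $\mathfrak{W}$ via a spaced staircase for (c), and for (b) exploit that a non-constant waning function has finite support. The only cosmetic difference is in (b), where the paper counts the finitely many waning functions pointwise below a fixed $f_p$, whereas you package the same finiteness into an ordinal rank $\rho$; both arguments are equally elementary.
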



\section{From waning functions to semigroup topologies}\label{section-waning-func-to-topology}

In this section we will prove the following for any waning function $f: \omega + 1 \to \omega + 1$:
\begin{enumerate}[\rm (a)]
   \item in \cref{neighbourhood_basis} we establish a convenient neighbourhood basis for $\mathcal{T}_f$ at every $g\in I_{\N}$; 
    \item  in \cref{theorem-waning-function-to-Polish-topology} we show that 
    $\mathcal{T}_f$ (defined in \cref{definition-waning}) is a Polish
    semigroup topology on the symmetric inverse monoid $I_{\N}$;
   \item 
   in \cref{lem-order-preserving} we show that if $f,h:\omega \rightarrow \omega$ are waning functions, then $\Tau_f \subseteq \Tau_h$ if and only if $f\preceq
   h$.
\end{enumerate}
In terms of proving the main results of the paper, (b) above proves one direction of Theorem~\ref{main-theorem}, and (c) together with Theorem~\ref{main-theorem} proves Theorem~\ref{newversion}.


%


 Let \(g\in I_\N\). We denote by $|g|$ the size of $g$ as a subset of $\N \times \N$ (i.e. $|g| = |\set{(x, (x)g)}{x\in \N}|$), and by $g{\restriction}_r:=\makeset{(x, y)\in g}{\(x\in r\)}$ the restriction of $g$ to $\{0, \ldots, r - 1\}$.

\begin{lemma}\label{lemma-new}
If $f$ is a waning function and $g\in I_{\N}$, then there exists $r\in \N$ such that
$(r)f \leq  (|g|)f=(|g{\restriction}_r|)f$.
\end{lemma}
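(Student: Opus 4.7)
The plan is to reduce to the behavior of $f$ near its ``tail'' and to split into cases according to whether $|g|$ is small or large relative to this tail. The only genuinely moving part is locating a threshold $N$ after which $f$ is constantly $0$; once this is in hand, the rest is bookkeeping.

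First I would dispose of the trivial case in which $f$ is the constant function with value $\omega$: then $(r)f = (|g|)f = (|g\restriction_r|)f = \omega$ for every $r \in \N$, and any $r$ works. So from now on I may assume $f$ takes some finite value. By the waning hypothesis, once $f$ reaches a nonzero finite value at some index $i$, the sequence $(i)f, (i+1)f, \ldots$ strictly decreases in $\omega$, so it must reach $0$ after at most $(i)f$ steps. Hence there is a least $N \in \omega$ with $(N)f = 0$, and by monotonicity of $f$ we have $(j)f = 0$ for every $j \in \omega + 1$ with $j \geq N$; in particular $(\omega)f = 0$.

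Now split into two cases based on the size of $g$. If $|g| \geq N$ (in particular whenever $g$ is infinite), then $(|g|)f = 0$ by the previous paragraph. Since $\dom(g)$ has at least $N$ elements, I can choose $r \in \N$ large enough that $|g\restriction_r| \geq N$ and simultaneously $r \geq N$. Then $(r)f = 0$ and $(|g\restriction_r|)f = 0 = (|g|)f$, which is exactly the desired inequality (with equality throughout).

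The remaining case is $|g| < N$, so $g$ is a finite partial bijection of size $k = |g| < N$ and $(k)f$ is some finite positive value. Here I choose any $r \in \N$ exceeding both $\max \dom(g)$ and $k$. The first condition ensures $g \restriction_r = g$, so $|g\restriction_r| = k = |g|$ and hence $(|g\restriction_r|)f = (|g|)f$. The second condition together with the non-increasing property of $f$ gives $(r)f \leq (k)f = (|g|)f$. Combining the two conclusions gives the claim. The only ``obstacle'' worth flagging is conceptual rather than technical: one must notice that the waning hypothesis forces $f$ to actually reach $0$ once it becomes finite, which is what makes the threshold $N$ available and lets the infinite-$g$ case be handled uniformly with the large-$|g|$ finite case.
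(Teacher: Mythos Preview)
Your proof is correct and follows essentially the same approach as the paper: handle the constant-$\omega$ case trivially, then in the remaining case exploit that $f$ eventually reaches $0$, choosing $r$ so that either $g\restriction_r = g$ (when $|g|$ is small) or $|g\restriction_r|$ lands in the zero region of $f$ (when $|g|$ is large). One harmless slip: in your case $|g|<N$ you assert ``$(k)f$ is some finite positive value,'' but in fact $(k)f$ may equal $\omega$; this claim is never used, so the argument stands.
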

\begin{proof}
If $f$ is constant with value $\omega$, then we define $r = 0$ and  so
$(r)f = \omega \leq \omega = (|g|)f=(|g{\restriction}_r|)f$.

Suppose that $f$ is not a constant function with value $\omega$. If $|g| < \omega$, then we define $r\in \N$ so that 
$r> \max (\dom(g))$. In this case, $g{\restriction}_r = g$ and $(r)f\leq (|g|)f$ since $|g| < r$. This implies that  $(r)f\leq (|g|)f = (|g{\restriction}_r|)f$. 

If  $|g| = \omega$, then we choose $r$ large enough so that $(|g{\restriction}_{r}|)f = 0$. Since $r \geq |g{\restriction}_{r}|$, it follows that $(r)f \leq (|g{\restriction}_{r}|)f = 0 = (\omega)f = (|g|)f$.
\end{proof} 

For any waning function $f$, any $g\in I_{\N}$, and any $r\in
\N$ such that the condition in \cref{lemma-new} holds we define:
\begin{equation}
  \label{definition}
  W_{f,g, r}=\{h\in I_\N: h{\restriction}_{r}=g{\restriction}_r \text{  and  }
  |\im(h) \cap (r\setminus \im(g)) |\leq (|g|)f\}.
\end{equation}

\begin{lemma}\label{explanation}
    If $f$ is a waning function, $g\in I_{\N}$, and $r\in \N$ are such that $W_{f,
    g, r}$ is defined, then for any $p\geq r$, $W_{f, g, p}$ is defined and $W_{f,
    g, p} \subseteq W_{f, g, r}$.    
\end{lemma}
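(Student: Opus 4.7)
The plan is to prove both assertions by unpacking the definitions and using two facts: that $f$ is non-increasing (by definition of waning), and that restricting to a larger initial segment only grows $g\restriction_\cdot$.

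First, for the well-definedness of $W_{f,g,p}$, I need to verify the hypothesis of \cref{lemma-new}, namely that $(p)f \leq (|g|)f = (|g\restriction_p|)f$. Since $p\geq r$ and $f$ is non-increasing, $(p)f \leq (r)f \leq (|g|)f$, where the last inequality uses that $W_{f,g,r}$ is defined. For the equality $(|g\restriction_p|)f = (|g|)f$, observe that $|g\restriction_r| \leq |g\restriction_p| \leq |g|$ (as restrictions to bigger domains can only add pairs), so by monotonicity of $f$,
\[
(|g\restriction_r|)f \;\geq\; (|g\restriction_p|)f \;\geq\; (|g|)f.
\]
Since the outer terms are equal by hypothesis, $(|g\restriction_p|)f = (|g|)f$, as required.

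Second, for the containment $W_{f,g,p}\subseteq W_{f,g,r}$, let $h\in W_{f,g,p}$. The condition $h\restriction_p = g\restriction_p$ immediately implies $h\restriction_r = g\restriction_r$ since $r\leq p$. For the cardinality condition, $r\subseteq p$ as ordinals, so $r\setminus\im(g)\subseteq p\setminus\im(g)$, whence
\[
\bigl|\im(h)\cap(r\setminus\im(g))\bigr| \;\leq\; \bigl|\im(h)\cap(p\setminus\im(g))\bigr| \;\leq\; (|g|)f,
\]
showing $h\in W_{f,g,r}$.

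There is no real obstacle here; the lemma is essentially a monotonicity statement, and both parts reduce to the observation that enlarging $r$ enlarges $g\restriction_r$ and the set $r\setminus\im(g)$, combined with the fact that $f$ is non-increasing. The only mild subtlety is making sure the equality $(|g\restriction_r|)f=(|g|)f$ propagates to $p$, which follows from sandwiching via monotonicity.
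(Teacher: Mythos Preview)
Your proof is correct and follows essentially the same approach as the paper's: both establish well-definedness of $W_{f,g,p}$ via the sandwich $(|g{\restriction}_r|)f \geq (|g{\restriction}_p|)f \geq (|g|)f$ combined with $(p)f \leq (r)f$, and the paper simply says the inclusion ``follows easily from $p\geq r$'' where you spell out the details. One minor phrasing issue: the condition $(p)f \leq (|g|)f = (|g{\restriction}_p|)f$ is not the \emph{hypothesis} of \cref{lemma-new} but rather its conclusion (and the defining condition for $W_{f,g,p}$); the mathematical content is unaffected.
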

\begin{proof}
Assume that $W_{f, g, r}$ is defined, i.e. $(r)f \leq (|g|)f=(|g{\restriction}_r|)f$. Fix any $p\geq r$. Since $f$ is a waning function and by the choice of $r$ we have the following: 
$$(p)f\leq (r)f\leq (|g|)f=(|g{\restriction}_r|)f\geq (|g{\restriction}_p|)f \geq (|g|)f.$$ 
It follows that $(p)f \leq (|g|)f=(|g{\restriction}_p|)f$ and, consequently, the set $W_{f, g, p}$ is defined. The inclusion $W_{f, g, p} \subseteq W_{f, g, r}$ follows easily from $p\geq r$. 
\end{proof}



\begin{definition}\label{max_smaller_wan}
For all \(f\in (\omega+1)^{(\omega+1)}\)  we define \(f':\omega+1 \to \omega+1\) to be the unique waning function obtained inductively as follows:
\begin{itemize}
    \item Let $(0)f'=(0)f$.
    \item  If $i\in \omega$ and $(i)f'\neq 0$, then let $(i+1)f'=\min\{ (i+1)f, (i)f'-1\}$.
    \item If $i\in \omega$ and $(i)f'=0$, then let $(i+1)f'=0$.
    \item if $(i)f'=\omega$ for all $i \in \omega$, then let $(\omega)f'=\omega$, and otherwise let $(\omega)f'=0$. 
\end{itemize}
In other words, for every \(i\in \omega\) we have
\[(i)f' = 
   \max\{0,\min\{(j)f-(i-j): j\leq i\}\},
\]
where we use the convention that $\omega-n=\omega$ for every $n\in \omega$. Note that $f'$ is equal to the minimum waning function with respect to the order $\preceq$ on $(\w+1)^{(\w+1)}$ such that \((i)f' \leq (i)f \) for all \(i\in \omega\). In particular, $f'=f$ whenever $f$ is a waning function. 
\end{definition}

\begin{lemma}\label{much_wan}
Let \(f\in (\omega+1)^{(\omega+1)}\) and $g\in I_\N$. If \(r\in\N\) satisfies \((r)f' \leq (|g|)f'=(|g{\restriction}_r|)f'\), then the set \(W_{f',g, r}\) is an open neighborhood of $g$ in \((I_\N,\Tau_f)\).
\end{lemma}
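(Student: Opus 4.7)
The plan is first to observe that $g \in W_{f',g,r}$ holds trivially, since $g{\restriction}_r = g{\restriction}_r$ and $|\im(g)\cap(r\setminus\im(g))| = 0 \leq (|g|)f'$. To prove openness in $\mathcal{T}_f$, I set $V_1 = \{h'\in I_\N : h'{\restriction}_r = g{\restriction}_r\}$, a finite intersection of subbasic sets of $\Itwo$ (one $U_{x,(x)g}$ for each $x\in \dom(g)\cap r$ and one $W_x$ for each $x\in r\setminus\dom(g)$), and hence open in $\mathcal{T}_f$; I also fix $X := r\setminus\im(g)$. When $(|g|)f' = \omega$, the size-constraint in the definition of $W_{f',g,r}$ is vacuous and $W_{f',g,r} = V_1$, so we are done.

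When $(|g|)f' < \omega$, I will show that every $h\in W_{f',g,r}$ has an open neighborhood of the form $V_1 \cap U_{f,n,X\cup S}$ lying inside $W_{f',g,r}$, for a carefully chosen $n\in\omega$ and finite $S\subseteq\im(g{\restriction}_r)$ with $|S| = s$. The key mechanism is that every $h'\in V_1$ satisfies $S\subseteq \im(g{\restriction}_r) = \im(h'{\restriction}_r) \subseteq \im(h')$, so $|(X\cup S)\cap\im(h')| = |X\cap\im(h')| + s$; hence $h'\in U_{f,n,X\cup S}$ yields $|X\cap\im(h')|\leq (n)f - s$. Writing $k = |\im(h)\cap X|$, it therefore suffices to pick $n, s$ with $(n)f - s \leq (|g|)f'$, $s \leq |g{\restriction}_r|$, and $h\in U_{f,n,X\cup S}$, the latter unfolding (using that $h{\restriction}_r = g{\restriction}_r$ and $h$ is injective, whence $|h|\geq |g{\restriction}_r| + k$) to $n + s + k \leq |h|$ and $k + s \leq (n)f$.

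The construction splits into two subcases. If $0 < (|g|)f' < \omega$, then $|g| < \omega$, and the strict decrease of the waning function $f'$ on its finite positive range combined with $(|g|)f' = (|g{\restriction}_r|)f'$ forces $|g| = |g{\restriction}_r|$; I let $j^*$ realise the minimum in the closed form $(|g|)f' = \min_{j\leq |g|}\{(j)f - (|g|-j)\}$ and set $n = j^*$, $s = |g| - j^*$, so that $(n)f - s = (|g|)f'$ and the remaining conditions are routine, using $k\leq (|g|)f'$. If $(|g|)f' = 0$, then $k = 0$, and $(|g{\restriction}_r|)f' = 0$ provides some $j'\leq |g{\restriction}_r|$ with $(j')f + j' \leq |g{\restriction}_r|$; setting $n = j'$ and $s = (j')f$ gives $(n)f - s = 0 = (|g|)f'$, and the other inequalities follow immediately.

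The main obstacle is that $f$ itself is not assumed to be waning, so no single value $(n)f$ can be guaranteed to lie below $(|g|)f'$; the fix is the auxiliary set $S\subseteq \im(g{\restriction}_r)$, which (because $V_1$ forces $S\subseteq\im(h')$) imposes a forced $s$-unit contribution to $|(X\cup S)\cap\im(h')|$ and therefore tightens the crude bound $(n)f$ on $|X\cap\im(h')|$ into the sharper $(n)f - s$. The two case-specific choices of $(n,s)$ are precisely those that calibrate this sharpened bound to equal $(|g|)f'$ exactly.
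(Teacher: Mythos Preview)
Your proof is correct and follows essentially the same approach as the paper: both arguments write $W_{f',g,r}$ as the intersection of the $\Itwo$-open set $V_1=\{l:l{\restriction}_r=g{\restriction}_r\}$ with a subbasic set $U_{f,n,X\cup S}$, where $X=r\setminus\im(g)$ and $S$ (the paper calls it $Z$) is a suitably sized subset of $\im(g{\restriction}_r)$ chosen so that the forced contribution $|S|$ to $|(X\cup S)\cap\im(h')|$ tightens the bound $(n)f$ down to $(|g|)f'$. The only cosmetic differences are that the paper treats the cases $(|g|)f'=0$ and $(|g|)f'>0$ uniformly via the auxiliary parameter $b$ and presents the computation as a chain of equalities showing $V_1\cap U_{f,j,X\cup Z}=W_{f',g,r}$ directly, whereas you split into two subcases and phrase openness as ``every $h$ has an open neighborhood'' (even though your choice of $(n,s)$ does not actually depend on $h$, so you are in fact producing a single open set equal to $W_{f',g,r}$).
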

\begin{proof}



By \cref{max_smaller_wan}, there is some \(j\leq|g{\restriction}_r|\) such that
\((|g{\restriction}_{r}|)f'\geq (j)f-(|g{\restriction}_{r}|-j)\). Put
\[b= (|g{\restriction}_r|)f'+|g{\restriction}_r|-(j)f.\]
Observe that if \((|g{\restriction}_{r}|)f'\neq 0\), then \((|g{\restriction}_{r}|)f'= (j)f-(|g{\restriction}_{r}|-j)\) and hence $b=j$, and if \((|g{\restriction}_{r}|)f'=0\), then  \(b= |g{\restriction}_r|-(j)f\geq j\). Note that in either case we have the following:
\begin{enumerate}
    \item[(i)] \(j\leq b\leq |g{\restriction}_r|\).
    \item[(ii)] \((|g{\restriction}_r|)f'=(j)f-(|g{\restriction}_r|-b)\).
\end{enumerate}

Let \(Y= r\setminus \im(g)\) and \(Z\subseteq \im(g{\restriction}_{r})\) be such that \(|Z|=|g{\restriction}_{r}|-b\). Then
\begin{equation}\label{1}
  (|g{\restriction}_{r}|)f'=(j)f-|Z|.  
\end{equation}
Let \(X=Y\cup Z\) and note that \(b=|\im(g{\restriction}_r) \setminus Z|=|g{\restriction}_r|-|Z|\). 
Then $\im(g{\restriction}_r) \setminus X =\im(g{\restriction}_r) \setminus Z$ and so 
\begin{equation}\label{2}
|\im(g) \setminus X| \geq|\im(g{\restriction}_r) \setminus X|= |\im(g{\restriction}_r) \setminus Z|= b\geq j,
\end{equation}
and $X\cap \im(g) = Z\cap \im (g)$ and so 
\[|X \cap \im(g) | = |Z \cap \im(g)| = |Z| = |g{\restriction}_r| - b =(j)f-(|g{\restriction}_r|)f'\leq  (j)f.\]
So \(g\in U_{f, j, X}=\makeset{l\in I_\N}{\(|\im(l) \setminus X|\geq j\) and \(|X\cap \im(l)|\leq (j)f\)}\). Moreover
\begin{align*}
    g&\in \makeset{l\in I_\N}{\(l{\restriction}_r= g{\restriction}_r\)}\cap U_{f, j, X}\\
    &= \makeset{l\in I_\N}{\(l{\restriction}_r= g{\restriction}_r\) and \(|\im(l) \setminus X|\geq j\) and \(|X\cap \im(l)|\leq (j)f\)}\\
      &= \makeset{l\in I_\N}{\(l{\restriction}_r= g{\restriction}_r\) and \(|X\cap \im(l)|\leq (j)f\)}  \quad\quad &&(\text{by \ref{2}})\\
    &= \makeset{l\in I_\N}{\(l{\restriction}_r= g{\restriction}_r\)  and \(|\im(l) \cap Z|+|\im(l)\cap Y|\leq (j)f\)}\quad\quad &&(\text{as } X=Z \sqcup Y)\\
        &= \makeset{l\in I_\N}{\(l{\restriction}_r= g{\restriction}_r\)  and \(|Z|+|\im(l)\cap Y|\leq (j)f\)} 
    \quad\quad &&(\text{as } Z\subseteq \im(g{\restriction}_r)=\im(l{\restriction}_r)) \\
      &= \makeset{l\in I_\N}{\(l{\restriction}_r= g{\restriction}_r\)  and \(|\im(l)\cap Y|\leq (j)f-|Z|\)}\\
       &= \makeset{l\in I_\N}{\(l{\restriction}_r= g{\restriction}_r\)  and \(|\im(l)\cap Y|\leq (|g{\restriction}_r|)f'\)}\quad\quad&&(\text{by } \ref{1})\\
       &= \makeset{l\in I_\N}{\(l{\restriction}_r= g{\restriction}_r\) and  \(|\im(l)\cap Y|\leq  (|g|)f'\)}\\
       &=\makeset{l\in I_\N}{\(l{\restriction}_r= g{\restriction}_r\) and \(|\im(l)\cap (r\setminus \im(g))|\leq  (|g|)f'\)}= W_{f', g, r}. 
       && \qedhere
\end{align*}
\end{proof}

\begin{lemma}\label{neighbourhood_basis}
If \(f\) is a waning function and \(g\in I_\N\),
then the family \[
\mathcal{B}_{f}(g) = \{W_{f, g, r}: (r)f \leq (|g|)f=(|g{\restriction}_r|)f\}\] forms an open neighbourhood basis at \(g\) in \((I_\N,\Tau_f)\).
\end{lemma}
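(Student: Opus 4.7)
The plan is to verify the two defining properties of a neighborhood basis in turn. That every \(W_{f,g,r} \in \mathcal{B}_f(g)\) is a \(\mathcal{T}_f\)-open neighborhood of \(g\) follows immediately from \cref{much_wan}: since \(f\) is waning, we have \(f'=f\) by \cref{max_smaller_wan}, so openness comes for free, and \(g \in W_{f,g,r}\) is obvious from \(g{\restriction}_r = g{\restriction}_r\) and \(\im(g) \cap (r\setminus \im(g)) = \emptyset\).

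The substantive direction is to show that every \(\mathcal{T}_f\)-open neighborhood of \(g\) contains some \(W_{f,g,r} \in \mathcal{B}_f(g)\). By \cref{explanation}, finite intersections are harmless: if \(W_{f,g,r_i} \subseteq V_i\) for finitely many \(i\), then any sufficiently large \(r\) with \((r)f \leq (|g|)f = (|g{\restriction}_r|)f\) (which exists by \cref{lemma-new}) yields \(W_{f,g,r} \subseteq \bigcap_i V_i\). So it suffices to handle each subbasic open of \(\mathcal{T}_f\) individually. The cases \(V = U_{x,y}\) with \((x,y) \in g\) and \(V = W_x\) with \(x \notin \dom(g)\) are routine: any valid \(r > x\) works, since the condition \(h{\restriction}_r = g{\restriction}_r\) in the definition of \(W_{f,g,r}\) carries these constraints from \(g\) to \(h\).

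The main obstacle is the case \(V = U_{f,n,X}\) with \(g \in V\). I would choose \(r\) large enough that \(X \subseteq r\), that \(|\im(g{\restriction}_r) \setminus X| \geq n\) (possible since \(|\im(g)\setminus X| \geq n\)), and that \((r)f \leq (|g|)f = (|g{\restriction}_r|)f\). Then \(|\im(h) \setminus X| \geq n\) for \(h \in W_{f,g,r}\) is immediate from \(\im(h) \supseteq \im(g{\restriction}_r)\). The delicate part is the bound \(|X \cap \im(h)| \leq (n)f\): splitting \(X \cap \im(h)\) into contributions from \(h{\restriction}_r\) and from \(h{\restriction}_{\N \setminus r}\) (noting that the latter may still hit points of \(\im(g{\restriction}_{\N\setminus r})\), where \(h\) and \(g\) are not required to agree) and using \(X \subseteq r\) together with the constraint \(|\im(h)\cap (r\setminus \im(g))| \leq (|g|)f\), I expect to derive \(|X \cap \im(h)| \leq |X \cap \im(g)| + (|g|)f\). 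Writing \(a = |X\cap \im(g)|\), we have \(a \leq (n)f\) and \(|g| \geq n + a\) since \(g \in U_{f,n,X}\), so the task reduces to \(a + (|g|)f \leq (n)f\). The case \((n)f = \omega\) is trivial as \(a\) is finite; otherwise, an induction using the strict-decrease clause of \cref{definition-waning} together with the non-increasingness of \(f\) yields \((|g|)f \leq (n+a)f \leq (n)f - a\), completing the argument.
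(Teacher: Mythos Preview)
Your proposal is correct and follows essentially the same approach as the paper's proof: you reduce to subbasic opens, pick $r$ large enough that $X\subseteq r$, $|\im(g{\restriction}_r)\setminus X|\geq n$, and the basis condition holds, then establish the key inequality $|X\cap\im(h)|\leq |X\cap\im(g)|+(|g|)f$ and finish with the waning-function inequality $(n+a)f\leq (n)f-a$ (which, since you already know $a\leq (n)f$, holds uniformly without a case split). The only cosmetic difference is that the paper splits $X\cap\im(h)$ along $\im(g)$ versus its complement rather than along $h{\restriction}_r$ versus $h{\restriction}_{\N\setminus r}$; your domain-based split reaches the same bound once you sub-split the $h{\restriction}_{\N\setminus r}$ contribution, as you anticipate in your parenthetical remark.
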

\begin{proof}
Clearly, for each open neighbourhood $V$ of $g$ in $(I_\N,\Itwo)$, there exists large enough $r\in\N$ such that the set $W_{f,g,r}$ is open in $(I_\N,\Tau_f)$ (see \cref{much_wan}) and $g\in W_{f,g,r}\subseteq V$. Let $n\in\N$ and $X$ be a finite subset of $\N$ such that 
\[g\in U_{f, n, X}=\{h\in I_\N: |\im(h)\setminus X|\geq n\hbox{ and }|X\cap \im(h)|\leq (n)f\}.\]

Let \(r\in \N\) be larger than all the elements of \(X\), and large enough that \(|\im(g{\restriction}_r)\setminus X|\geq n\) and $(r)f\leq (|g|)f=(|g{\restriction}_r|)f$. By \cref{much_wan}, $g\in W_{f,g,r}$ and $W_{f,g,r}\in \Tau_f$. 
 We show that $W_{f,g,r}\subseteq U_{f, n, X}$.

Let \(h\in W_{f, g, r}\). As \(h{\restriction}_r=g{\restriction}_r\) we have \(|\im(h)\setminus X|\geq n\). 
It remains to show that \(|X\cap \im(h)|\leq (n)f\). If $(n)f=\w$, then there is nothing to show. Assume that $(n)f\in\w$. By the assumption we have that
\[|\im(h)\cap (r\setminus \im(g))|\leq (|g|)f.\]
So, \(|\im(h)\cap (X\setminus \im(g))|\leq (|g|)f\) and hence
\begin{align*}
    |\im(h)\cap X|&=|\im(h)\cap\im(g)\cap X |+|\im(h)\cap (X\setminus \im(g))|\\
    &\leq|\im(g)\cap X |+|\im(h)\cap (r\setminus \im(g))|\\
    &\leq|\im(g)\cap X |+ (|g|)f\\
    &\leq|\im(g)\cap X |+ (n + |\im(g)\cap X |)f\quad\quad \text{(as }|\im(g)\setminus X |\geq n).
    \end{align*}
If $(n + |\im(g)\cap X |)f=0$, then   $|\im(h)\cap X|\leq |\im(g)\cap X |\leq (n)f$, as $g\in U_{f,n,X}$. If $(n + |\im(g)\cap X |)f>0$, then taking into account that $f$ is a waning function, we get that 
 $$(n + |\im(g)\cap X |)f\leq (n)f-|\im(g)\cap X|.$$ In the latter case $|\im(h)\cap X|\leq |\im(g)\cap X |+ (n)f-|\im(g)\cap X|=(n)f$. Hence \(|X\cap \im(h)|\leq (n)f\), witnessing that $W_{f,g,r}\subseteq U_{f,n,X}$.
\end{proof}

  Lemmas \ref{much_wan} and \ref{neighbourhood_basis} imply the following.
\begin{corollary}\label{non-wan_vs_wan_1}
If \(f\in (\omega+1)^{(\omega+1)}\) is a function, then \(\mathcal{T}_{f'}\subseteq \mathcal{T}_{f}\).
\end{corollary}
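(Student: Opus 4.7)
The plan is to deduce the corollary directly from the two preceding lemmas with essentially no extra work. To show $\mathcal{T}_{f'}\subseteq \mathcal{T}_f$ it is enough, by a standard characterisation of topology inclusion, to exhibit at every $g \in I_\N$ a neighbourhood basis for $g$ in $(I_\N,\mathcal{T}_{f'})$ consisting of sets that are open in $(I_\N,\mathcal{T}_f)$.

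First I would fix an arbitrary $g\in I_\N$. Since $f'$ is a waning function (by the construction in \cref{max_smaller_wan}), \cref{neighbourhood_basis} applied with $f'$ in place of $f$ gives that
\[
\mathcal{B}_{f'}(g) \;=\; \{W_{f', g, r} : (r)f' \leq (|g|)f' = (|g\restriction_r|)f'\}
\]
is an open neighbourhood basis of $g$ in $(I_\N, \mathcal{T}_{f'})$. Next I would apply \cref{much_wan} (which is stated for an arbitrary $f\in (\omega+1)^{(\omega+1)}$, not just for waning functions) to conclude that every such $W_{f', g, r}$ is already open in $(I_\N, \mathcal{T}_f)$.

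Putting these two observations together: if $U\in\mathcal{T}_{f'}$ and $g\in U$, then there is some $W_{f', g, r}\in \mathcal{B}_{f'}(g)$ with $g\in W_{f', g, r}\subseteq U$, and this $W_{f', g, r}$ is $\mathcal{T}_f$-open. Hence $U$ is $\mathcal{T}_f$-open at each of its points, which yields $U\in \mathcal{T}_f$ and therefore $\mathcal{T}_{f'}\subseteq \mathcal{T}_f$.

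There is no real obstacle; the substantive work has been done in \cref{much_wan} and \cref{neighbourhood_basis}. The only thing to be alert to is the slight asymmetry in hypotheses between the two lemmas — \cref{neighbourhood_basis} needs the first argument to be a waning function, whereas \cref{much_wan} does not — and the fact that both lemmas are being used at the \emph{same} function $f'$, but on two different topologies $\mathcal{T}_{f'}$ and $\mathcal{T}_f$ respectively. Once this is noted, the proof is essentially a one-line combination of the two lemmas.
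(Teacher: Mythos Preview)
Your proposal is correct and follows exactly the paper's approach: the paper simply states that the corollary follows from \cref{much_wan} and \cref{neighbourhood_basis}, and you have spelled out precisely how, applying \cref{neighbourhood_basis} to the waning function $f'$ to obtain a neighbourhood basis in $\mathcal{T}_{f'}$ and then \cref{much_wan} to see that each basis element is already $\mathcal{T}_f$-open.
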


Recall that a set $U$ is a \emph{neighbourhood} of a point $x$ if $x$ lies in the interior of $U$. In particular, $U$ need not be open.
\cref{neighbourhood_basis} implies the following, which will be important in \cref{section-from-topologies-to-waning}.

\begin{remark}\label{remarknew}
For any waning function $f$ the following assertions hold:
\begin{itemize}
    \item For each $h\in I_\N$ such that $(|h|)f=0$, a set $U \subseteq I_{\N}$ is a neighbourhood of $h$ in $\Tau_f$ if and only if $U$ is a neighbourhood of $h$ in $\Ifour$.
    
    \item
    For each $h\in I_\N$ such that $(|h|)f=\w$, a set $U \subseteq I_{\N}$ is a neighbourhood of $h$ in $\Tau_f$ if and only if $U$ is a neighbourhood of $h$ in $\Itwo$.
\end{itemize}
\end{remark}


We will now show that the topologies $\mathcal{T}_f$ are Polish semigroup topologies for $I_{\N}$. We first show that the topology generated by $\Itwo$ and any single set $U_{f, n, X}$ is Polish.

\begin{lemma}\label{newlabel1}
If $f\in (\omega + 1)^{\omega + 1}$, $n\in \N$ and $X$ is a finite subset of $\N$, then the topology $\Tau_{n,X}$ generated by $\Itwo\cup \{U_{f,n, X}\}$ is Polish and possesses a base consisting of clopen sets.
\end{lemma}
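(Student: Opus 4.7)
The plan is to decompose $U_{f, n, X}$ as an intersection of an $\Itwo$-open and an $\Itwo$-closed set, build a countable clopen base for $\Tau_{n, X}$ from that decomposition, and finally invoke Choquet's characterisation of Polish spaces. Write $U_{f, n, X} = A_1 \cap A_2$, where $A_1 := \{g \in I_\N : |\im(g) \setminus X| \geq n\}$ is $\Itwo$-open (it is the union, over all $n$-tuples of distinct $x_1, \ldots, x_n \in \N$ and distinct $y_1, \ldots, y_n \in \N \setminus X$, of the $\Itwo$-basic opens $U_{x_1, y_1} \cap \cdots \cap U_{x_n, y_n}$), and $A_2 := \{g \in I_\N : |X \cap \im(g)| \leq (n)f\}$ is $\Itwo$-closed (its complement is, for $(n)f < \omega$, a finite union over $((n)f + 1)$-subsets $S \subseteq X$ of the $\Itwo$-open sets $\{g : S \subseteq \im(g)\} = \bigcup_{(y_s)_s} \bigcap_{s \in S} U_{y_s, s}$). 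If $(n)f = \omega$ then $A_2 = I_\N$ so $\Tau_{n, X} = \Itwo$, already known to be Polish and zero-dimensional; I assume $(n)f < \omega$ below.

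Next I would produce a countable clopen base for $\Tau_{n, X}$. The $\Itwo$-basic clopens (finite intersections of the subbasic $\Itwo$-clopens $U_{x, y}$ and $W_z$) are automatically $\Tau_{n, X}$-clopen, since $\Itwo \subseteq \Tau_{n, X}$; they form a clopen base at each $g \notin U_{f, n, X}$, where $\Tau_{n, X}$-neighbourhoods of $g$ coincide with $\Itwo$-neighbourhoods of $g$. The key further observation is that if $V$ is an $\Itwo$-basic clopen with $V \subseteq A_1$, then $V \cap U_{f, n, X} = V \cap A_2$ is $\Tau_{n, X}$-clopen, since its complement $V^c \cup A_2^c$ is a union of two $\Itwo$-open sets. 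At any $g \in U_{f, n, X}$, a basic neighbourhood $V \cap U_{f, n, X}$ can be shrunk to such a clopen by choosing $(x_1, y_1), \ldots, (x_n, y_n) \in g$ with distinct $y_i \in \N \setminus X$ (which exist since $g \in A_1$) and replacing $V$ by $V' := V \cap U_{x_1, y_1} \cap \cdots \cap U_{x_n, y_n}$, which is an $\Itwo$-basic clopen contained in $A_1$.

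Finally I would establish that $\Tau_{n, X}$ is Polish by exhibiting a winning strategy for Player II in the strong Choquet game on $(I_\N, \Tau_{n, X})$. The two inputs are that $(I_\N, \Itwo)$ is Polish and that $(U_{f, n, X}, \Itwo|_{U_{f, n, X}})$ is Polish, the latter because $U_{f, n, X}$ is $\Itwo$-$G_\delta$ as an intersection of an open and a closed set. Player II plays $\Itwo$-basic clopens along a fixed $(I_\N, \Itwo)$-Choquet winning strategy so long as Player I's chosen points remain outside $U_{f, n, X}$; the first time Player I picks a point $x_k \in U_{f, n, X}$, Player II refines inside Player I's open to a $\Tau_{n, X}$-clopen of the form $V' \cap U_{f, n, X}$ with $V' \subseteq A_1$ (exactly as in the clopen-base construction), which forces all subsequent play into $U_{f, n, X}$, and thereafter plays a $(U_{f, n, X}, \Itwo|_{U_{f, n, X}})$-Choquet winning strategy. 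In either case the intersection of II's moves is nonempty; combined with $T_1$, second countability, and regularity (which follows from the clopen base), Choquet's theorem then yields that $\Tau_{n, X}$ is Polish. The main obstacle is organising the transition between the ambient and the subspace strategy so that the intersection remains nonempty, and this is made possible precisely by the clopen-refinement observation of the preceding paragraph.
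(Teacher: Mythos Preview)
Your argument is correct. Both you and the paper rest on the same decomposition $U_{f,n,X}=A_1\cap A_2$ with $A_1$ $\Itwo$-open and $A_2$ $\Itwo$-closed, and both extract a clopen subbase from it; the difference lies in how ``Polish'' is verified. The paper writes $A_1=\bigcup_m U_m$ as a union of $\Itwo$-clopens, observes that each $U_{f,n,X}\cap U_m=U_m\cap A_2$ is $\Itwo$-closed, and then simply cites \cite[Lemmas 13.2 and 13.3]{Kechris}: adjoining a single closed set to a Polish topology preserves Polishness, and the supremum of countably many Polish refinements of a Polish topology is again Polish. Your route instead runs the strong Choquet game by hand, using a two-phase strategy that switches from an $\Itwo$-winning strategy to an $\Itwo{\restriction}_{U_{f,n,X}}$-winning strategy the first time Player~I's point lands in $U_{f,n,X}$; this works precisely because the clopen refinement $V'\cap U_{f,n,X}$ traps all later play inside the $G_\delta$ set $U_{f,n,X}$, and because $\Tau_{n,X}$ and $\Itwo$ induce the same subspace topology there. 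The paper's proof is shorter and more citable, whereas yours is self-contained and makes explicit the mechanism (the $G_\delta$ structure of $U_{f,n,X}$) that the Kechris lemmas encapsulate. One small point worth stating explicitly in your write-up: when Player~I's point $x_k$ lies outside $U_{f,n,X}$, the $\Tau_{n,X}$-open set $W_k$ need not itself be $\Itwo$-open, so you should say that Player~II first shrinks $W_k$ to an $\Itwo$-basic clopen neighbourhood of $x_k$ before feeding it to the $\Itwo$-strategy; you note this implicitly via your neighbourhood-basis observation, but spelling it out removes any doubt about the nesting condition in the game.
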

\begin{proof}
It is easy to see that the set 
\(\makeset{g\in I_\N}{\(|\im(g) \setminus X|\geq n\)}\)
is open in \((I_\N,\Itwo)\). Note that the subbasis for $\Itwo$ given in \eqref{definition_Itwo} consists of clopen sets and so $\Itwo$ possesses a basis of clopen sets. Thus we can find a family of clopen (with respect to $\Itwo$) sets \(\makeset{U_m}{\(m\in \N\)}\subseteq \Itwo\) such that 
\[\bigcup_{m\in \N}U_m=\makeset{g\in I_\N}{\(|\im(g) \setminus X|\geq n\)}.\]
For each \(m\in \N\), let \(U_{f, n, X, m}= U_{f, n, X}\cap U_m\). By definition, these sets belong to \(\mathcal{T}_{n,X}\). Moreover, 
\[U_{f, n, X}= \bigcup_{m\in \N} U_{f, n, X, m}.\]
So, the topology \(\mathcal{T}_{n,X}\) is equal to the topology generated by \(\Itwo\cup \{U_{f, n, X, m}: m\in\N\}\).

Let us show that for each $m\in\N$ the set $U_{f,n,X,m}$ is closed in $(I_\N,\Itwo)$.  Fix an arbitrary $m\in\N$ and $g\in I_\N\setminus U_{f, n, X,m}$.
  If \(g\not\in U_m\) then we define 
 \[W_g= I_\N \setminus U_m.\]
 Since the set $U_m$ is closed in $(I_\N,\Itwo)$, \(W_g\) is an open neighbourhood of \(g\) in \((I_\N,\Itwo)\) which is disjoint from \(U_{f, n, X, m}\).  If \(g\in U_m\setminus U_{f, n, X,m}\), then \(|\im(g) \setminus X|\geq n\) and \(|X\cap \im(g)|>(n)f\). We define
 \[W_g=\{h\in I_\N\vert  h{\restriction}_{(\im(g)\cap X)g^{-1}}= g{\restriction}_{(\im(g)\cap X)g^{-1}}\}.\]
 Again, \(W_g\) is an open neighbourhood of \(g\) in \((I_\N, \Itwo)\) which is disjoint from \(U_{f, n, X,m}\). So the set $U_{f, n, X,m}$ is closed in $(I_\N,\Itwo)$ as required.

 Since $\Itwo$ is Polish, Lemma 13.2 from~\cite{Kechris} implies that for each
 $m\in\N$  the topology generated by  $\Itwo\cup \{U_{f,n, X,m}\}$ is
 Polish. Lemma 13.3
 from~\cite{Kechris} then implies that the topology \(\mathcal{T}_{n,X}\)
 is Polish as well.
Note that for a given basis $\mathcal B$ of $\Itwo$ consisting of clopen sets, the family $\Theta=\mathcal B\cup \{U_{f, n, X, m}: m\in\N\}$ is a subbasis for \(\Tau_{n, X}\) consisting of clopen sets. Then the finite intersections of elements of $\Theta$ form a basis of  \(\mathcal{T}_{n,X}\) consisting of clopen sets.
\end{proof}

The set of all finite subsets of $\N$ is denoted by $[\mathbb N]^{<\infty}$.

 \begin{corollary}\label{Polish}
For any function \(f\in (\omega + 1)^{\omega + 1}\) the topology \(\mathcal{T}_f\) is Polish and possesses a basis consisting of clopen sets.
\end{corollary}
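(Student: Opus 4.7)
The plan is to realise $\mathcal{T}_f$ as the join (supremum) of a countable family of Polish topologies on $I_{\N}$ to which \cref{newlabel1} applies, and then to invoke the standard fact that the join of countably many Polish topologies on the same set is Polish.

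First I would observe that the collection of pairs $(n,X)$ with $n\in\N$ and $X\in [\N]^{<\infty}$ is countable. By \cref{newlabel1}, for each such pair the topology $\Tau_{n,X}$ generated by $\Itwo\cup\{U_{f,n,X}\}$ is Polish and possesses a basis $\mathcal{B}_{n,X}$ consisting of clopen sets; moreover, $\Tau_{n,X}$ is finer than $\Itwo$. Since every $\Tau_{n,X}$ contains $\Itwo$ and contains $U_{f,n,X}$, the join $\bigvee_{n,X}\Tau_{n,X}$ contains $\Itwo\cup\{U_{f,n,X}:n\in\N,\ X\in[\N]^{<\infty}\}$, and is therefore at least $\Tau_f$. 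Conversely, every $\Tau_{n,X}$ is contained in $\Tau_f$ by construction, so the join is contained in $\Tau_f$. Hence $\Tau_f=\bigvee_{n,X}\Tau_{n,X}$.

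Next I would apply Lemma 13.3 of~\cite{Kechris} to the countable family $\{\Tau_{n,X}\}$: the join of countably many Polish topologies on a fixed underlying set is Polish. This immediately yields that $\Tau_f$ is Polish.

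Finally, for the clopen basis, I would take $\Theta=\bigcup_{n,X}\mathcal{B}_{n,X}$; this is a countable subbasis for $\Tau_f$. Each element of $\Theta$ is clopen in the corresponding $\Tau_{n,X}$, and since $\Tau_{n,X}\subseteq \Tau_f$ both the set and its complement remain open in $\Tau_f$, so each element of $\Theta$ is clopen in $\Tau_f$. The family of finite intersections of elements of $\Theta$ is therefore a basis for $\Tau_f$ consisting of clopen sets, completing the proof. There is no real obstacle here: the whole argument is a bookkeeping application of \cref{newlabel1} combined with the elementary Kechris lemma, and the clopen basis assertion follows by exactly the same argument used at the end of the proof of \cref{newlabel1}.
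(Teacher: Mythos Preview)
Your proposal is correct and follows essentially the same approach as the paper: both express $\Tau_f$ as the topology generated by the countable family $\{\Tau_{n,X}\}$, invoke Lemma~13.3 of \cite{Kechris} to conclude Polishness, and then take the union of the clopen bases $\mathcal{B}_{n,X}$ as a clopen subbasis for $\Tau_f$. The only difference is that you spell out a couple of routine verifications (that the join equals $\Tau_f$, and that clopenness passes to the finer topology) which the paper leaves implicit.
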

\begin{proof}
 Since $\Tau_f$ is generated by $\bigcup_{n\in\N}\bigcup_{X\in[\N]^{<\infty}}\Tau_{n, X}$, Lemma 13.3 from~\cite{Kechris} implies that the space $(I_\N,\Tau_f)$ is Polish. By \cref{newlabel1}, for each $n\in\N$ and $X\in [\N]^{<\infty}$ the topology $\Tau_{n, X}$ has a basis $\mathcal B_{n,X}$ consisting of clopen sets. It is routine to verify that the family $\mathcal B=\bigcup_{n\in\N}\bigcup_{X\in[\N]^{<\infty}}\mathcal B_{n, X}$ is a subbasis of $\Tau_f$ consisting of clopen (with respect to $\Tau_f$) sets. Then the finite intersections of elements of $\mathcal B$ form a basis of  
 \(\mathcal{T}_f\) consisting of clopen sets.
 \end{proof}

\begin{theorem}\label{theorem-waning-function-to-Polish-topology}
For each waning function $f$, $(I_\N,\mathcal{T}_f)$ is a Polish topological semigroup.   
\end{theorem}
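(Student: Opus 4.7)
The space $(I_\N, \Tau_f)$ is Polish by \cref{Polish}, so the content of the theorem is the continuity of multiplication. The plan is to fix $g_1, g_2 \in I_\N$, let $g = g_1 g_2$, take a basic neighborhood $W_{f, g, r}$ of $g$ from the basis of \cref{neighbourhood_basis}, and produce integers $r_1, r_2$ for which $W_{f, g_1, r_1}$ and $W_{f, g_2, r_2}$ are basic neighborhoods satisfying $W_{f, g_1, r_1} \cdot W_{f, g_2, r_2} \subseteq W_{f, g, r}$.

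First I choose $r_2 \geq r$ large enough to additionally satisfy (i) $r_2 > (x)g_1$ for every $x \in r \cap \dom(g_1)$, (ii) $r_2 > (y)g_2^{-1}$ for every $y \in r \cap \im(g_2)$, and (iii) $(r_2)f \leq (|g_2|)f = (|g_2{\restriction}_{r_2}|)f$ so that $W_{f,g_2,r_2}$ is defined (possible by \cref{lemma-new} and \cref{explanation}). Then I pick $r_1 \geq r_2$ satisfying the analogous definability condition for $W_{f, g_1, r_1}$. For arbitrary $h_1 \in W_{f,g_1,r_1}$ and $h_2 \in W_{f,g_2,r_2}$, the equality $(h_1 h_2){\restriction}_r = g{\restriction}_r$ follows from $h_i{\restriction}_{r_i} = g_i{\restriction}_{r_i}$ combined with condition (i) by the usual Baire-space-style composition argument: for $x < r$, the value $(x)g_1$ lies below $r_2$ by (i), so the agreements of $h_2$ and $g_2$ below $r_2$ transfer through composition.

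The heart of the proof is the inequality $|\im(h_1 h_2) \cap (r \setminus \im(g))| \leq (|g|)f$. Each such \emph{mistake} $y$ has a unique preimage $x \in \im(h_1) \cap \dom(h_2)$ under $h_2$, and I split according to whether $x < r_2$ (a Case A mistake) or $x \geq r_2$ (Case B). In Case A, the agreement of $h_2$ with $g_2$ below $r_2$ yields $y = (x)g_2 \in \im(g_2)$, and the constraint $y \notin \im(g)$ confines $x$ to $((r \cap \im(g_2)) \setminus \im(g))g_2^{-1}$, a set of size at most $|\im(g_2) \setminus \im(g)| = |g_2| - |g|$; moreover $x$ lies in $\im(h_1) \cap (r_1 \setminus \im(g_1))$, so the number of Case A mistakes is at most $\min(|g_2| - |g|, (|g_1|)f)$. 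In Case B, bijectivity of $h_2$ together with condition (ii) forces $y \notin \im(g_2)$, hence $y \in \im(h_2) \cap (r_2 \setminus \im(g_2))$, giving at most $(|g_2|)f$ Case B mistakes.

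The main obstacle---where the waning hypothesis on $f$ is indispensable---is combining the two partial bounds to obtain a total of at most $(|g|)f$. Setting $d = |g_2| - |g|$, if $(|g_2|)f$ is a positive integer then the strict-decrement property of waning functions gives $(|g|)f \geq (|g_2|)f + d$, so the total is at most $d + ((|g|)f - d) = (|g|)f$; if $(|g_2|)f = 0$ then Case B contributes nothing and Case A is bounded by $(|g_1|)f \leq (|g|)f$ by monotonicity of $f$; and if $(|g_2|)f = \omega$ then $(|g|)f = \omega$ and the bound is vacuous. This verifies $h_1 h_2 \in W_{f, g, r}$ and hence continuity of multiplication.
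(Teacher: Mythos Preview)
Your proof is correct and follows essentially the same approach as the paper: split the ``mistakes'' $y\in\im(h_1h_2)\cap(r\setminus\im(g))$ into two types (those coming through $\im(g_2)$ versus not), bound each type using the membership conditions $h_i\in W_{f,g_i,r_i}$, and combine via the waning property. The only cosmetic differences are that you use two radii $r_1\geq r_2$ rather than a single $p$, you phrase the case split via the location of the preimage $x=(y)h_2^{-1}$ rather than directly via whether $y\in\im(g_2)$ (these are equivalent given your condition~(ii)), and you fold the case $(|g|)f=\omega$ into the final trichotomy rather than handling it separately at the outset.
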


\begin{proof}
 By \cref{Polish}, the space $(I_\N,\mathcal{T}_f)$ is Polish.  Fix an arbitrary $a,b,c\in I_\N$ such that $ab=c$. If $(|c|)f=\w$, then by \cref{remarknew} the neighbourhoods of $c$ in $\Tau_f$ and $\Itwo$ coincide. Since $\Itwo$ is a semigroup topology and $\Itwo\subseteq \Tau_f$ we get that the semigroup operation is continuous at the point $(a,b)$. Assume that $(|c|)f\in\w$.  Fix an arbitrary open neighborhood $U$ of $c$. By \cref{neighbourhood_basis}, there exists $r\in\N$ such that $W_{f,c,r}\subseteq U$. Choose a positive integer $p$ that satisfies the following conditions:
 \begin{enumerate}
    \item $(p)f\leq (|a|)f=(|a{\restriction}_p|)f$;
    \item $(p)f\leq (|b|)f=(|b{\restriction}_p|)f$;
    \item $p\geq \max(\{0,\ldots,r\}a)$;
    \item $p\geq \max(\{0,\ldots,r\}b^{-1})$.
 \end{enumerate}
\cref{neighbourhood_basis} and conditions (1) and (2) imply that the sets $W_{f,a,p}$ and $W_{f,b,p}$ are open neighborhoods of $a$ and $b$, respectively. Let us show that $W_{f,a,p}\cdot W_{f,b,p}\subseteq W_{f,c,r}$.  Fix any elements $d\in W_{f,a,p}$ and $e\in W_{f,b,p}$. Since $a{\restriction}_p=d{\restriction}_p$ and $b{\restriction}_p=e{\restriction}_p$, condition (3) yields that $(de){\restriction}_r=c{\restriction}_r$.  Let 
$$A=\im(b)\cap\im(de)\cap (r\setminus\im(c))\quad\hbox{ and }\quad B=\im(e)\cap (r\setminus\im(b)).$$ We claim that $\im(de)\cap (r\setminus\im(c))\subseteq A\cup B$. Indeed, for any $x\in \im(de)\cap (r\setminus\im(c))$, either $x\in A$ or $x\in r\setminus \im(b)$. In the latter case, $x\in B$, as required. 
In order to show $de\in W_{f,c,r}$ it suffices to check $|A|+|B|\leq (|c|)f$.


Consider any $y\in A$.
 If $(y)b^{-1}\in \im (a)$, then $y\in \im(c)$, which contradicts the choice of $y$. By condition (4), $(y)b^{-1}\leq p$. Hence $(y)b^{-1}\in \dom(b)\cap (p\setminus \im(a))$, witnessing that $(A)b^{-1}\subseteq \dom(b)\cap (p\setminus \im(a))$. Since $b$ is a partial bijection, $|A|=|(A)b^{-1}|\leq |\dom(b)\cap (p\setminus \im(a))|$. For the sake of brevity we put $t=|\dom(b)\cap (p\setminus \im(a))|$. 
 Since $(y)b^{-1}\leq p$ and $e{\restriction}_p=b{\restriction}_p$ we get that $((y)b^{-1}, y)\in e$. Taking into account that $y\in\im(de)$, we obtain $(y)b^{-1}\in \im(d)\cap (p\setminus \im(a))$. It follows that $(A)b^{-1}\subseteq \im(d)\cap (p\setminus \im(a))$.
 Since $d\in W_{f,a,p}$ and $b$ is a partial bijection, we get that $|A|=|(A)b^{-1}|\leq (|a|)f$.
 Hence $$|A|\leq \min\{t, (|a|)f\}.$$

 Since  $e\in W_{f,b,p}$, $|B|\leq (|b|)f$. 

  Finally, let us check that $\min\{t,(|a|)f\} + (|b|)f\leq (|c|)f$. Since for every $x\in \dom(b)\cap (p\setminus \im(a))$, $(x)b\in \im(b)\setminus \im(c)$ we get that $|c|\leq |b|-t$. If $(|b|)f>0$, then $(|b|)f+t\leq (|c|)f$ as $f$ is a waning function. 
 If $(|b|)f=0$, then  
$\min\{t,(|a|)f\}+(|b|)f\leq (|a|)f\leq (|c|)f$, as $|a|\geq |c|$ and $f$ is a waning function. Thus, $de\in W_{f,c,r}$ and, consequently, $W_{f,a,p}\cdot W_{f,b,p}\subseteq W_{f,c,r}$. Hence $(I_\N,\Tau_f)$ is a Polish topological semigroup.   
\end{proof}

Finally, in terms of the promises made at the beginning of this section, we now show how the topologies $\mathcal{T}_f$ are ordered with respect to inclusion. 

\begin{lemma}\label{lem-order-preserving} 
If $f$ and $g$ are waning functions, then the following are equivalent:
\begin{enumerate}[\rm (i)]
    \item \(\mathcal{T}_f\subseteq  \mathcal{T}_g\).
    \item for all \(n\in \omega\), we have \((n)g\leq (n)f\).
\end{enumerate}
\end{lemma}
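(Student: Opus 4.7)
The plan is to prove the two implications separately. Both \(\mathcal{T}_f\) and \(\mathcal{T}_g\) contain \Itwo, and by \cref{Polish} both are generated by \Itwo\ together with the sets \(U_{f,n,X}\) (respectively \(U_{g,n,X}\)). So for (ii) \(\Rightarrow\) (i) it suffices to check that each subbasic open set \(U_{f, n, X}\) of \(\mathcal{T}_f\) is \(\mathcal{T}_g\)-open, which I would do by producing, for each \(h \in U_{f,n,X}\), a basic \(\mathcal{T}_g\)-neighbourhood of \(h\) of the form \(W_{g, h, r}\) contained in \(U_{f, n, X}\).

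For the forward direction, fix \(h \in U_{f,n,X}\) and set \(m = |X \cap \im(h)|\), so that \(m \leq (n)f\) and \(|h| \geq |\im(h)| \geq n + m\). Using \cref{neighbourhood_basis} applied to \(g\), pick \(r\) large enough that \(r > \max X\), that \(|\im(h\restriction_r) \setminus X| \geq n\), and that \(W_{g, h, r}\) is a \(\mathcal{T}_g\)-neighbourhood of \(h\). For any \(k \in W_{g, h, r}\), the equality \(k\restriction_r = h\restriction_r\) immediately gives \(|\im(k) \setminus X| \geq n\). Splitting
\[
X \cap \im(k) \;=\; \bigl(X \cap \im(h)\bigr) \;\cup\; \bigl(X \cap (\im(k) \setminus \im(h))\bigr)
\]
and using \(X \subseteq r\), the second piece is contained in \(\im(k) \cap (r \setminus \im(h))\), which has size at most \((|h|)g\). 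The key inequality is then \(m + (|h|)g \leq (n)f\), obtained by chaining (a) \((|h|)g \leq (n+m)g\) by non-increasingness, (b) \((n+m)g \leq (n+m)f\) by the hypothesis in (ii), and (c) \((n+m)f \leq (n)f - m\) by the waning property of \(f\), valid since \(m \leq (n)f\). The case \((n)f = \omega\) is vacuous.

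For (i) \(\Rightarrow\) (ii), I would argue by contrapositive. Suppose \((n)g > (n)f\) for some \(n \in \omega\); then \(m := (n)f\) is finite and \((n)g \geq m+1\). Set \(X = \{0, 1, \ldots, m\}\) and choose \(h\) with \(|h| = n\), \(\dom(h) = \{M, \ldots, M+n-1\}\), and \(\im(h) = \{M+m+1, \ldots, M+m+n\}\) for some \(M > m\), so that \(h \in U_{f, n, X}\). If \(\mathcal{T}_f \subseteq \mathcal{T}_g\), then \(U_{f, n, X}\) is \(\mathcal{T}_g\)-open, so by \cref{neighbourhood_basis} some \(W_{g, h, r}\) with \(r\) larger than \(\max \dom(h)\) and \(\max \im(h)\) is contained in \(U_{f, n, X}\). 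Define \(k = h \cup \{(r+j, j) : j \in X\}\); then \(k\restriction_r = h\restriction_r\) and a direct computation gives \(\im(k) \cap (r \setminus \im(h)) = X\) of size \(m+1 \leq (n)g = (|h|)g\), so \(k \in W_{g, h, r}\). But \(X \subseteq \im(k)\) forces \(|X \cap \im(k)| = m+1 > m = (n)f\), contradicting \(k \in U_{f, n, X}\).

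The main obstacle is the arithmetic in the forward direction, specifically threading the three separate uses of monotonicity (of \(g\)), hypothesis (\(g \leq f\) coordinatewise on \(\omega\)), and waning (of \(f\)) into a single budget \((n)f - m\) that absorbs exactly \(m\) "mistakes" contributed by the restriction-free part of \(\im(k)\). The reverse direction is more transparent once one has the right probe: \(h\) is chosen to maximise the available slack \((|h|)g = (n)g\), and the extension \(k\) spends exactly \(m+1\) of that slack to overflow the \(U_{f, n, X}\) budget.
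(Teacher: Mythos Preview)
Your argument is correct, but it takes a somewhat different route from the paper's, and in the (ii) $\Rightarrow$ (i) direction it works harder than necessary.

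For (ii) $\Rightarrow$ (i), the paper bypasses the subbasic sets \(U_{f,n,X}\) entirely and works directly with the neighbourhood bases of \cref{neighbourhood_basis}: given \(U\in\mathcal{T}_f\) and \(h\in U\), choose \(r\) with \(W_{f,h,r}\subseteq U\), enlarge \(r\) so that \(W_{g,h,r}\) is defined, and then simply observe that \((|h|)g\leq(|h|)f\) forces \(W_{g,h,r}\subseteq W_{f,h,r}\) by inspection of the defining inequality in \eqref{definition}. No waning arithmetic is needed beyond non-increasingness; the inclusion is a one-liner. Your approach instead embeds \(W_{g,h,r}\) into \(U_{f,n,X}\), which forces you to thread together monotonicity of \(g\), the coordinatewise hypothesis, and the strict-decrease clause of waning for \(f\) to recover the bound \(m+(|h|)g\leq(n)f\). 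This is perfectly valid and self-contained, but it re-proves by hand a special case of what \cref{neighbourhood_basis} already packages.

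For (i) $\Rightarrow$ (ii), the paper's contrapositive also uses the \(W\)-sets as the test open sets, taking \(\id_n\) as the base point and \(W_{f,\id_n,b}\) with \(b\) chosen so that \((n)f<b-n\leq(n)g\), then constructing the offending \(h\) explicitly. Your version with \(U_{f,n,X}\) and an arbitrary \(h\) of size \(n\) mapping away from \(X\) is an equally clean variant; the probe element plays the same role in both arguments. The only minor slip in your write-up is the displayed ``split'' \(X\cap\im(k)=(X\cap\im(h))\cup(X\cap(\im(k)\setminus\im(h)))\), which is an overcount rather than an equality (elements of \(X\cap\im(h)\) need not lie in \(\im(k)\)); but since you only use it as an upper bound on \(|X\cap\im(k)|\), the inequality you derive is unaffected.
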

\begin{proof}
(i) $\Rightarrow$ (ii): 
Seeking a contradiction, we suppose that \(\mathcal{T}_f\subseteq  \mathcal{T}_g\) but that there exists $n\in \N$
such that \((n)f< (n)g\). 

Let $\id_n$ be the identity function on \(\{0, 1, \ldots, n-1\}\), let $b\in \N$ be such that $(n)f <b - n \leq (n)g$. 
By \cref{much_wan}, the set 
\(W_{f, \id_n, b}\) is open in \((I_\N, \Tau_f)\). 
Since \(W_{f, \id_n, b}\in \mathcal{T}_g\), by \cref{neighbourhood_basis} there is some \(r\in \N\) such that $W_{g, \id_n, r}\subseteq W_{f, \id_n, b}$.
Since $W_{g, \id_n, t + 1} \subseteq W_{g, \id_n, t}$ for any large enough $t$, we may choose $r > b$. By assumption $b- n > (n)f \geq 0$, $b > n$ and so
we may define:
\[h= \id_n \cup \makeset{(r+i, n+i)}{\(i\in \{0, 1,\ldots, b-n-1\}\)}\in I_\N.\]
Then $h{\restriction}_{r}=\id_n{\restriction}_r\) and \(|\im(h) \cap (b\setminus \im(\id_n)) | = |\im(h) \cap (r\setminus \im(\id_n))|=  b - n\). In particular,  \[(n)f < |\im(h) \cap (b\setminus \im(\id_n)) | = |\im(h) \cap (r\setminus \im(\id_n))| \leq (n)g = (|\id_n|)g\] and so $h \in W_{g, \id_n, r}$ but \(h\notin W_{f, \id_n, b}\),
which is a contradiction.

\noindent (ii) $\Rightarrow$ (i): Let \(U\in \mathcal{T}_f\). We show that \(U\) is an open neighbourhood of all its elements with respect to \(\mathcal{T}_g\).
Suppose that \(h\in U\). By \cref{neighbourhood_basis}, there is \(r\in \N\) such that \(h\in W_{f, h, r} \subseteq U\). Enlarging $r$ if necessary we can assume that $h\in W_{g, h, r}\in\Tau_g$. 
Since $(|h|)g \leq (|h|)f$, 
$$W_{g, h, r}
       \subseteq \{l\in I_\N: l{\restriction}_{r}=h{\restriction}_r\hbox{ and }|\im(l) \cap (r\setminus \im(h)) |\leq  (|h|)f\}=W_{f, h, r}.$$
Hence \(U\) is an open neighbourhood of \(h\) with respect to \(\Tau_g\).
\end{proof}

\cref{lem-order-preserving} implies the following.
\begin{corollary}\label{newcol}
If $f$ and $g$ are waning functions, then 
$\mathcal{T}_f=\mathcal{T}_g$ if and only if $f=g$.
\end{corollary}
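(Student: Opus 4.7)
The plan is to deduce this corollary directly from Lemma~\ref{lem-order-preserving} by applying the equivalence in both directions, and then to dispose of the one coordinate not covered by that lemma, namely the value at $\omega$. The forward implication $f=g\Ra \Tau_f=\Tau_g$ is immediate from Definition~\ref{tau}.

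For the converse, suppose that $\Tau_f=\Tau_g$. Then $\Tau_f\subseteq \Tau_g$ and $\Tau_g\subseteq \Tau_f$, so Lemma~\ref{lem-order-preserving} gives $(n)f\leq (n)g$ and $(n)g\leq (n)f$ for every $n\in \omega$, and hence $(n)f=(n)g$ for all $n\in\omega$. This is all Lemma~\ref{lem-order-preserving} supplies directly, so the remaining task is to check that $(\omega)f=(\omega)g$.

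To handle the value at $\omega$, I would observe that for any waning function $f$ the value $(\omega)f$ is completely determined by the values $\{(n)f : n\in\omega\}$. Unpacking Definition~\ref{definition-waning}: if $f$ is constant with value $\omega$ on $\omega+1$ then $(n)f=\omega$ for every $n\in\omega$ and $(\omega)f=\omega$; otherwise there is some $i\in\omega$ with $(i)f\in\omega$, and then the strict-decrease clause together with non-increasingness forces $(n)f$ to drop to $0$ at some finite stage and remain $0$, so by monotonicity $(\omega)f=0$. Consequently $(\omega)f=\omega$ exactly when $(n)f=\omega$ for every $n\in\omega$, and $(\omega)f=0$ otherwise; the identical dichotomy applies to $g$. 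Since $f$ and $g$ agree on $\omega$ they fall into the same case, so $(\omega)f=(\omega)g$ and $f=g$. There is no genuine obstacle here; the only subtle point is that Lemma~\ref{lem-order-preserving}(ii) controls only the values on $\omega$, so a short separate argument is needed to recover equality at the point $\omega$, and this is forced by the very definition of waning.
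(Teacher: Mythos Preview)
Your argument is correct and is essentially the same as the paper's, which simply records that the corollary follows from Lemma~\ref{lem-order-preserving}. Your additional paragraph verifying that $(\omega)f$ is determined by $f{\restriction}_{\omega}$ for waning functions is accurate and fills in a detail the paper leaves implicit, but it does not constitute a different approach.
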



The following corollary will be used in \cref{section-from-topologies-to-waning}.

\begin{corollary}\label{cor-nbhds-zero}
 Let  \(f, g:\omega+1^{\omega +1}\) be waning functions, 
 and $\mathcal{F}$ and $\mathcal{G}$ be
 the sets of neighbourhoods of \(\varnothing\) in $\mathcal{T}_f$ and $\mathcal{T}_g$, respectively.
 Then $\mathcal{F}\subseteq \mathcal{G}$ if and only if \((0)f\geq (0)g\).
\end{corollary}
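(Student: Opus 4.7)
The plan is to use the explicit neighbourhood basis at $\varnothing$ provided by \cref{neighbourhood_basis}. Since $|\varnothing|=0$, $\varnothing{\restriction}_r=\varnothing$, and $r\setminus\im(\varnothing)=r$, the definition \eqref{definition} simplifies to
\[
W_{f,\varnothing,r}=\{h\in I_\N : \dom(h)\cap r=\varnothing \text{ and } |\im(h)\cap r|\leq (0)f\},
\]
and the condition $(r)f\leq (|\varnothing|)f=(0)f$ is automatic because $f$ is non-increasing. Thus $\{W_{f,\varnothing,r}:r\in\N\}$ is a neighbourhood basis of $\varnothing$ in $\Tau_f$, and the analogous description applies to $\Tau_g$. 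Once this reformulation is in hand, both directions become transparent.

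For the implication $(0)f\geq (0)g\Rightarrow \mathcal F\subseteq \mathcal G$, I would take an arbitrary $U\in\mathcal F$, choose $r$ with $W_{f,\varnothing,r}\subseteq U$, and observe that $W_{g,\varnothing,r}\subseteq W_{f,\varnothing,r}$: the domain condition is identical, while $|\im(h)\cap r|\leq (0)g\leq (0)f$ follows from the hypothesis. Hence $U$ contains the $\Tau_g$-open neighbourhood $W_{g,\varnothing,r}$ of $\varnothing$, placing it in $\mathcal G$.

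For the converse I would argue by contrapositive: assume $(0)f<(0)g$, which forces $(0)f\in\omega$ (as $(0)g\leq\omega$), and consider the candidate neighbourhood $W_{f,\varnothing,r}\in\mathcal F$ with $r=(0)f+1$. To show $W_{f,\varnothing,r}\notin\mathcal G$, it suffices to check that no basic neighbourhood $W_{g,\varnothing,s}$ (with $s\geq r$, without loss of generality) is contained in $W_{f,\varnothing,r}$. For this, for each such $s$ I would define
\[
h=\{(s+i,i):i\in\{0,1,\ldots,(0)f\}\}\in I_\N.
\]
Then $\dom(h)\cap s=\varnothing$ and $|\im(h)\cap s|=(0)f+1\leq (0)g$, so $h\in W_{g,\varnothing,s}$; yet $|\im(h)\cap r|=(0)f+1>(0)f$, so $h\notin W_{f,\varnothing,r}$.

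There is no real obstacle in this argument: the whole content is the simplification of $W_{f,\varnothing,r}$ at the empty function, after which both directions are a matter of reading off the definitions and producing the obvious witness $h$ in the forward direction. The slight care needed is confirming that $(0)f<(0)g$ forces $(0)f$ to be a finite ordinal so that $r=(0)f+1$ makes sense, and that enlarging $s$ beyond $r$ loses no generality since $W_{g,\varnothing,s}\subseteq W_{g,\varnothing,r}$ for $s\geq r$ by \cref{explanation}.
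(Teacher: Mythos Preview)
Your proof is correct. The simplification of $W_{f,\varnothing,r}$ and the forward implication match the paper exactly. For the converse, however, you take a different route from the paper. The paper argues directly (not by contrapositive): assuming $\mathcal{F}\subseteq\mathcal{G}$, it introduces auxiliary waning functions $f',g'$ with $(0)f'=(0)f$, $(0)g'=(0)g$ and $(x)f'=(x)g'=0$ for $x\neq 0$, observes (via \cref{remarknew}) that $\Tau_{f'}$ and $\Tau_{g'}$ have the same neighbourhoods of $\varnothing$ as $\Tau_f$ and $\Tau_g$ respectively while at every nonempty point both coincide with $\Ifour$, concludes $\Tau_{f'}\subseteq\Tau_{g'}$, and then invokes \cref{lem-order-preserving} to get $(0)f=(0)f'\geq(0)g'=(0)g$. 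Your explicit witness construction is more elementary and self-contained, avoiding the reduction to \cref{lem-order-preserving}; the paper's approach, by contrast, makes transparent that the result is genuinely a corollary of \cref{lem-order-preserving}.
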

\begin{proof}
By \cref{neighbourhood_basis}, the family $B_f(\varnothing) = \{W_{f, \varnothing, r}: (r)f \leq (0)f \}$ is a filter basis for $\mathcal{F}$. Since $f$ is a waning function, $(r)f\leq (0)f$ holds for all $r\in \omega + 1$, and so 
$B_f(\varnothing) = \{W_{f, \varnothing, r}: r\in \omega\}$.

If $(0)f \geq (0)g$, then $W_{f, \varnothing, r} \supseteq W_{g, \varnothing, r}$  for all $r$, by the definition of these sets (see \cref{definition}). So 
$B_f(\varnothing) =
\{W_{f, \varnothing, r}: r\in \omega  \}$
is coarser than 
$\{W_{g, \varnothing, r}: r \in \omega \}=
B_g(\varnothing)$. Hence $\mathcal{F}\subseteq \mathcal{G}$.

Conversely, suppose that $\mathcal{F}\subseteq \mathcal{G}$. If  we define waning functions $f'$ and $g'$ such that $(0)f' = (0)f$ and $(0)g' = (0)g$ and $(x)f' = (x)g' = 0$ if $x\neq 0$, 
then the set of neighbourhoods of $\varnothing$ in $\mathcal{T}_{f'}$ is $\mathcal{F}$. Similarly for $\mathcal{T}_{g'}$ and $\mathcal{G}$.
A subset $N$ of $I_{\N}$ is a neighbourhood in $\mathcal{T}_{f'}$ of $p\in I_{\N}\setminus \{\varnothing\}$ if and only if $N$ is a neighbourhood of $p$ in $\Ifour$.
Similarly, for $T_{g'}$.
Hence every neighbourhood in $\mathcal{T}_{f'}$ of every $p\in I_{\N}$ is also a neighbourhood of $p$ in $\mathcal{T}_{g'}$.
 So $\mathcal{T}_{f'} \subseteq \mathcal{T}_{g'}$ and so, by \cref{lem-order-preserving}, $(0)f = (0)f' \geq (0)g' = (0)g$.
\end{proof}

By \cref{Polish}, every function $f:\omega +1 \to \omega + 1$ (not just waning ones) corresponds to a Polish topology $\mathcal{T}_f$ on $I_{\N}$. It is natural (but not necessary) to ask how these compare to the Polish semigroup topologies $\mathcal{T}_f$ arising from waning functions $f$. 
We will show in \cref{from-function-to-waning-function} that $\mathcal{T}_f$ is a Polish semigroup topology for every function $f:\omega +1 \to \omega + 1$, but that there always exists a waning function $f'$ such that $\mathcal{T}_f=\mathcal{T}_{f'}$ and so non-waning functions do not give rise to any additional topologies. 

\begin{lemma}\label{lem:tech}
Let \(f\) be a waning function with \((\omega)f=0\),
 $n\in \N$ and  $X\in [\N]^{<\infty}$.  If \(g\in I_\N\) is arbitrary with \(|\im(g)\setminus X|\geq n\) and \(|X\cap \im(g)|\geq (n)f\), then \((|g|)f=0\).  
\end{lemma}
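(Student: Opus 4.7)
The approach is to split on the value of $(n)f$ and exploit the fact that a waning function strictly decreases by at least one at each step once its value is a positive natural number. In every case the plan is to produce an index $m \leq |g|$ with $(m)f = 0$, after which the non-increasing nature of $f$ forces $(|g|)f = 0$.

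First I would dispose of the trivial cases. If $|g| = \omega$, the hypothesis $(\omega)f = 0$ finishes the argument immediately. If $(n)f = \omega$, the assumption $|X \cap \im(g)| \geq (n)f = \omega$ contradicts the finiteness of $X$, so this case is vacuous. If $(n)f = 0$, the first assumption gives $|g| = |\im(g)| \geq |\im(g) \setminus X| \geq n$, so monotonicity of $f$ yields $(|g|)f \leq (n)f = 0$.

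The main case is therefore $0 < (n)f < \omega$ with $|g|$ finite. Here I would combine the two hypotheses with the disjoint decomposition $\im(g) = (\im(g) \setminus X) \sqcup (X \cap \im(g))$, which, since $g$ is a partial bijection, yields $|g| = |\im(g)| \geq n + (n)f$. A short induction based on \cref{definition-waning} then shows that $(n + (n)f)f = 0$: so long as $(n + k)f$ is a positive natural number it must drop by at least one at the next step, so after at most $(n)f$ such steps it is forced down to $0$. Monotonicity then gives $(|g|)f \leq (n + (n)f)f = 0$.

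I do not expect any real obstacle; the only bookkeeping point is to ensure that the waning decrement clause applies at every intermediate index, which is automatic since $(n + k)f$ remains a natural number strictly less than $(n)f$ until it reaches $0$.
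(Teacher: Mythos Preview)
Your proof is correct and follows essentially the same route as the paper's: reduce to finite $g$, deduce $|g|\geq n+(n)f$ from the disjoint decomposition of $\im(g)$, and then use the strict decrease built into the waning definition to force $(|g|)f=0$. The paper compresses your case split on $(n)f$ into a single chain $(|g|)f\leq \max(0,(n)f-(|g|-n))$, but the content is the same.
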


\begin{proof}
 If \(g\) is infinite, then we are done by the assumption that \((\omega)f=0\). Suppose that $g$ is finite. Then 
\[(|g|)f\leq \max(0, (|g|-1)f-1)\leq\cdots \leq \max(0, (|g|-(|g|-n))f-(|g|-n))=\max(0, (n)f-(|g|-n)).\]

As \(|\im(g)\setminus X| \geq n\), it follows that \(|g|-n\geq |\im(g)\cap X|\geq (n)f\). So \((n)f-(|g|-n)\leq 0\) and hence \(\max(0, (n)f-(|g|-n))= 0\) as required. 
\end{proof}

\begin{theorem}\label{from-function-to-waning-function}
      If \(f\in(\omega+1)^{(\omega+1)}\), then \(\Tau_f=\Tau_{f'}\). 
\end{theorem}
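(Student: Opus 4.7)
The plan is to establish the reverse inclusion $\mathcal{T}_f \subseteq \mathcal{T}_{f'}$; the other direction is \cref{non-wan_vs_wan_1}. Since $\Itwo \subseteq \mathcal{T}_{f'}$ by the definition of $\mathcal{T}_{f'}$, it suffices to show that each subbasic set $U_{f,n,X}$ of $\mathcal{T}_f$ is $\mathcal{T}_{f'}$-open. So fix $n\in\N$, a finite $X\subseteq \N$, and $g\in U_{f,n,X}$. If $(n)f=\omega$ then $U_{f,n,X}=\{h\in I_\N:|\im(h)\setminus X|\geq n\}$ is already open in $\Itwo$, so assume $(n)f\in\omega$, and set $m:=|X\cap \im(g)|\leq (n)f$. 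Since $|\im(g)\setminus X|\geq n$, we have $|g|\geq n+m$, and $(n)f'\leq (n)f<\omega$ combined with the last clause of \cref{max_smaller_wan} gives $(\omega)f'=0$.

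I will exhibit an $r\in\N$ such that the basic $\mathcal{T}_{f'}$-neighborhood $W_{f',g,r}$ provided by \cref{neighbourhood_basis} lies inside $U_{f,n,X}$. Choose $r$ large enough that: (a) $X\subseteq r$; (b) $|\im(g{\restriction}_r)\setminus X|\geq n$; (c) for every $y\in X\cap \im(g)$, the preimage $g^{-1}(y)$ lies in $r$; and (d) $(r)f'\leq (|g|)f'=(|g{\restriction}_r|)f'$, so that $W_{f',g,r}$ is a valid basic neighborhood (possible by \cref{lemma-new} applied to the waning function $f'$). Conditions (a) and (c) ensure $X\cap \im(g{\restriction}_r)=X\cap \im(g)$, and since any $h\in W_{f',g,r}$ extends $g{\restriction}_r$ so that $\im(h{\restriction}_{\N\setminus r})$ is disjoint from $\im(g{\restriction}_r)$, they also give $X\cap \im(h{\restriction}_{\N\setminus r})\subseteq (r\setminus \im(g))\cap \im(h)$.

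For any $h\in W_{f',g,r}$, the equality $h{\restriction}_r=g{\restriction}_r$ together with (b) yields $|\im(h)\setminus X|\geq n$. Splitting $\im(h)=\im(g{\restriction}_r)\sqcup \im(h{\restriction}_{\N\setminus r})$, the first piece contributes exactly $m$ to $|X\cap \im(h)|$, while the second contributes at most $|\im(h)\cap (r\setminus \im(g))|\leq (|g|)f'$ by the definition of $W_{f',g,r}$. The main obstacle is then the single numerical inequality $m+(|g|)f'\leq (n)f$. I will obtain it from two features of $f'$: it is non-increasing, so $(|g|)f'\leq (n+m)f'$ since $|g|\geq n+m$; and the closed-form expression in \cref{max_smaller_wan} applied with $j=n$ and $i=n+m$ gives $(n+m)f'\leq \max\{0,(n)f-m\}=(n)f-m$, where the last equality uses $m\leq (n)f$. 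Combining, $|X\cap \im(h)|\leq m+(n)f-m=(n)f$, so $h\in U_{f,n,X}$, completing the proof.
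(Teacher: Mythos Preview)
Your proof is correct. The approach is close in spirit to the paper's---both directions reduce to the subbasic sets $U_{f,n,X}$ and exhibit a neighbourhood $W_{f',g,r}\subseteq U_{f,n,X}$---but your argument is more unified than the original. The paper first disposes of the case where $f{\restriction}_\omega$ is identically $\omega$, then splits on whether $(|g|)f'>0$ or $(|g|)f'=0$: in the former case it invokes the auxiliary \cref{lem:tech} to show $g\in U_{f',n,X}\subseteq U_{f,n,X}$, and only in the latter does it pass to a $W_{f',g,r}$-neighbourhood (where the condition $|\im(h)\cap(r\setminus\im(g))|\leq 0$ makes the containment immediate). You instead handle everything through the single numerical inequality $m+(|g|)f'\leq (n)f$, derived directly from the closed form $(n+m)f'\leq \max\{0,(n)f-m\}$ together with $|g|\geq n+m$. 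This avoids \cref{lem:tech} altogether and covers both of the paper's cases at once, at the cost of needing the extra condition (c) on $r$ to make the counting go through when $(|g|)f'>0$.
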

\begin{proof}
Note that if \(f{\restriction}_\omega\) is constant with value \(\omega\), then \(f{\restriction}_{\omega}=f'{\restriction}_{\omega}\) and we are done.
So assume that this is not the case which, in particular, implies that \((\omega)f'=0\).

    By \cref{non-wan_vs_wan_1}, we have \(\Tau_f\supseteq\Tau_{f'}\) and so it only remains to show that \(\Tau_{f'}\supseteq\Tau_{f}\). 
    Let \(n\in \N\), \(X\in[\N]^{<\infty}\), and 
    \[U_{f, n, X}= \makeset{g\in I_\N}{\(|\im(g) \setminus X|\geq n\) and \(|X\cap \im(g)|\leq (n)f\)}.\]
    In order to show that \(U_{f,n,X}\in \Tau_{f'}\), let \(g\in U_{f, n, X}\) be arbitrary. We show that \(U_{f, n, X}\) is a neighbourhood of \(g\) with respect to \(\mathcal{T}_{f'}\).


First assume that \((|g|)f'>0\).  Since \(|\im(g)\setminus X|\geq n\) and $f'$ is a waning function with $(\omega)f'=0$, it follows from the contra-positive of \cref{lem:tech}, that $|X \cap \im(g)| < (n)f'$ and so $g \in U_{f',n,X}$. Note that \(U_{f',n, X}\subseteq U_{f,n, X}\) so we have that \(U_{f,n, X}\) is a neighbourhood of \(g\) with respect to \(\Tau_{f'}\).

So now assume that \((|g|)f'=0\).
Let \(r\) be larger than all elements of \((X)g^{-1}\) as well as the first \(n\) elements of \((\N\setminus X)g^{-1}\), and large enough that \(W_{f',g, r}\in \Tau_{f'}\). It follows that
\[g\in W_{f',g, r}\subseteq \makeset{h\in I_{\N}}{\(g{\restriction}_r = h{\restriction}_r\) and \(\im(h)\cap X=\im(g)\cap X\)}\subseteq U_{f, n, X}\]
so \(U_{f, n, X}\) is a neighbourhood of \(g\) with respect to \(\Tau_{f'}\) as required. \end{proof}   

\section{From semigroup topologies to waning functions}  \label{section-from-topologies-to-waning}

We have shown in Section \ref{section-waning-func-to-topology} that every waning function gives rise to a Polish semigroup topology on $I_{\N}$. In this section we will prove the converse, namely that every Polish (in fact every $T_1$ and second-countable) semigroup topology on $I_{\N}$ corresponds to a waning function. In other words, the aim of this section is to finish the proof of \cref{main-theorem}.

Recall that there are two minimal Polish semigroup topologies on $I_{\N}$, namely $\Itwo$ and $\Ithree=\Itwo^{-1}$. Reflecting this fact, the correspondence between waning functions and Polish topologies on $I_{\N}$ is ``one-to-two" in the following sense. Every waning function $f$ gives rise to a unique Polish topology $\Tau_f$ on $I_{\N}$ which contains $\Itwo$ and a unique topology $\Tau_f^{-1}$ containing $\Ithree$. Due to this duality we will mostly consider an arbitrary $T_1$ and second-countable topology $\Tau$ containing $\Itwo$.

The aim of the next definitions and results (up to and including \cref{increasing_neighbourhoods}) is to construct a certain sequence of filters on $I_\N$ for such a $\Tau$ which we will call good filters. The good filters will later be used to define the waning function corresponding to $\Tau$.

\begin{definition}
If \(X\subseteq \N\) is finite, then we define \(i_X:\N\to \N \setminus X\) to be the unique order isomorphism.
For $g\in I_\N$ let ${\uparrow}g=\makeset{h\in I_\N}{\(g\subseteq h\)}$.
    If \(g\in I_\N\) is finite, then we define a bijection
    \[d_g:{\uparrow}g \to I_\N\quad \text{ by }\quad(h)d_g=i_{\dom(g)} \circ h \circ i_{\im(g)}^{-1}.\]

    Note that the function \(d_g\) is continuous with respect to any shift continuous topology on \(I_\N\) and has a clopen domain with respect to \(\Itwo\).
\end{definition}

\begin{lemma}\label{addendum}
 If $g$ is a finite idempotent of \(I_\N\), then $d_{g}$ is an isomorphism.   
\end{lemma}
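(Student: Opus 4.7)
The plan is to exploit the classical fact that idempotents of the symmetric inverse monoid $I_\N$ are precisely partial identities: a finite idempotent $g$ must equal $\id_A$ where $A := \dom(g) = \im(g)$ is a finite subset of $\N$. In particular, $i_{\dom(g)} = i_{\im(g)} = i_A$, and this is the crucial equality that will make the composition formula for $d_g$ telescope in the desired way.

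First I would check that ${\uparrow}g$ is closed under multiplication in $I_\N$, so that it makes sense to ask whether $d_g$ is a homomorphism. For $h_1,h_2 \in {\uparrow}g$, the containment $g = g\cdot g \subseteq h_1\cdot h_2$ is immediate from $g\subseteq h_1$, $g\subseteq h_2$, and the fact that partial composition in $I_\N$ respects the inclusion order on relations. Since $d_g$ is already a bijection by construction, and $I_\N$ is an inverse semigroup, it only remains to prove that $d_g$ is a semigroup homomorphism.

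The central computation is the identity
\[ d_g(h_1) \cdot d_g(h_2) \;=\; i_A \cdot h_1 \cdot (i_A^{-1} \cdot i_A) \cdot h_2 \cdot i_A^{-1} \;=\; d_g(h_1 \cdot h_2). \]
The middle factor $i_A^{-1}\cdot i_A$ is the partial identity on $\N\setminus A$. Because $h_1 \supseteq g = \id_A$, the map $h_1$ restricts to the identity on $A$ and sends $\dom(h_1)\setminus A$ into $\N\setminus A$; so inserting this partial identity between $h_1$ and $h_2$ only discards those inputs $x \in A$ from the composition $h_1 h_2$. But flanking the whole expression by $i_A$ on the left and $i_A^{-1}$ on the right already erases precisely the indices in $A$ from both domain and image, so the ``missing'' $A$-part is invisible on both sides. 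Hence the two products describe the same element of $I_\N$.

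The main obstacle is the careful domain bookkeeping for partial composition in $I_\N$: one has to verify that both sides really do agree as partial bijections of $\N$, not merely on the elements where they are jointly defined. The idempotency hypothesis on $g$ is indispensable here, since only when $\dom(g) = \im(g)$ does the junction factor $i_{\im(g)}^{-1}\cdot i_{\dom(g)}$ collapse to $i_A^{-1}\cdot i_A = \id_{\N\setminus A}$; for a general finite $g$ this factor would fail to be a partial identity and the telescoping argument would break down, which is presumably why the lemma is stated only for idempotent $g$.
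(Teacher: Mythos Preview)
Your proof is correct and follows essentially the same approach as the paper: both reduce to showing that the junction factor $i_{\im(g)}^{-1}\,i_{\dom(g)}=\id_{\N\setminus A}$ can be absorbed because $\im(i_A h_1)\subseteq \N\setminus A$. The only difference is cosmetic---you make the identification $g=\id_A$ and the equality $i_{\dom(g)}=i_{\im(g)}$ explicit, whereas the paper uses them silently in the line $(f)d_g(h)d_g=i_{\dom(g)}f\, i_{\im(g)}^{-1}i_{\im(g)}\,h\, i_{\im(g)}^{-1}$.
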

\begin{proof}
Since $g$ is an idempotent, then ${\uparrow}g$ is an inverse subsemigroup of $I_\N$.
As \(d_g\) is a bijection it suffices to show that $d_g$ is a homomorphism.
    Let \(f, h\in {\uparrow}g\). Note that \(\im(i_{\dom(g)}f)\cap \im(g)=\varnothing\), as $g\subseteq f$ and $f$ is injective. It follows that $i_{\dom(g)}f i_{\im(g)}^{-1}i_{\im(g)}=i_{\dom(g)}f$. Then
    \[(f)d_g(h)d_g=i_{\dom(g)}f i_{\im(g)}^{-1}i_{\im(g)}h i_{\im(g)}^{-1}=i_{\dom(g)}f h i_{\im(g)}^{-1}=(fh)d_g.\]
\end{proof}

\begin{definition}
Let $\Tau$ be a second countable semigroup topology on \(I_{\N}\) containing \(\Itwo\) and \(g\in I_{\N}\) be finite. Then let
\[\mathcal{F}_{\Tau,g}:=\makeset{(N\cap {\uparrow}g)d_{g}}{\(N\) is a neighbourhood of \(g\) with respect to \(\Tau\)}.\]
\end{definition}


\begin{lemma}\label{filter_independant_of_size}
    If \(\Tau\) is a second countable semigroup topology on \(I_{\N}\) containing \(\Itwo\), and \(g, h\in I_{\N}\) are such that \(|g|=|h|<\w\), then
    \(\mathcal{F}_{\Tau,g}=\mathcal{F}_{\Tau,h}.\)
\end{lemma}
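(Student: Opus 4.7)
The plan is to produce a self-homeomorphism $\phi$ of $(I_\N,\Tau)$ sending $g$ to $h$ such that $\phi({\uparrow}g)={\uparrow}h$ and $d_g(x)=d_h(\phi(x))$ for every $x\in {\uparrow}g$. Once such a $\phi$ is in hand, for any neighbourhood $N$ of $g$ the set $\phi(N)$ is a neighbourhood of $h$, and
\[(N\cap {\uparrow}g)d_g \;=\; \phi(N\cap {\uparrow}g)\,d_h \;=\; (\phi(N)\cap {\uparrow}h)\,d_h\;\in\;\mathcal{F}_{\Tau,h},\]
so $\mathcal{F}_{\Tau,g}\subseteq \mathcal{F}_{\Tau,h}$, and symmetry gives equality.

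To construct $\phi$, let $A=\dom(g)$, $B=\im(g)$, $C=\dom(h)$, $D=\im(h)$; by hypothesis $|A|=|C|$ and $|B|=|D|$. Enumerate $g=\{(a_i,b_i):i<n\}$ and $h=\{(c_i,d_i):i<n\}$. Define $\sigma\colon \N\to \N$ by $(c_i)\sigma=a_i$ for $i<n$ and $\sigma{\restriction}_{\N\setminus C} = i_A\circ i_C^{-1}$; similarly define $\tau$ by $(b_i)\tau=d_i$ for $i<n$ and $\tau{\restriction}_{\N\setminus B}= i_D\circ i_B^{-1}$. Then $\sigma,\tau\in S_\N$, and a direct check gives $\sigma g\tau = h$. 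Setting $\phi(x):=\sigma x\tau$, the map $\phi$ is continuous because $\Tau$ is a semigroup topology, and its inverse $x\mapsto \sigma^{-1}x\tau^{-1}$ is continuous for the same reason, so $\phi$ is a self-homeomorphism of $(I_\N,\Tau)$.

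To verify $\phi({\uparrow}g)={\uparrow}h$: if $g\subseteq x$ then $h=\sigma g\tau \subseteq \sigma x\tau =\phi(x)$, and conversely $\phi^{-1}({\uparrow}h)\subseteq {\uparrow}g$ by the same reasoning applied to $\sigma^{-1},\tau^{-1}$. To verify $d_g(x)=d_h(\phi(x))$ for $x\in {\uparrow}g$, recall $d_g(x)=i_A\circ x\circ i_B^{-1}$ and $d_h(\phi(x))=i_C\circ \sigma\circ x\circ\tau\circ i_D^{-1}$. The choice of $\sigma$ on $\N\setminus C$ gives $i_C\circ \sigma = i_A$ (as maps $\N\to \N\setminus A$), and the choice of $\tau$ on $\N\setminus B$ gives $\tau \circ i_D^{-1} = i_B^{-1}$ (as partial maps $\N\setminus B \to \N$); since $x$ maps $\N\setminus A$ into $\N\setminus B$ (using $g\subseteq x$), these are the only portions that matter for the composition, and one concludes $d_g(x)=d_h(\phi(x))$.

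The main obstacle is the simultaneous constraint in the construction of $\sigma$ and $\tau$: one must achieve $\sigma g\tau = h$ and at the same time pin down the restrictions of $\sigma,\tau$ to the complements $\N\setminus C$, $\N\setminus B$ to be the canonical order-isomorphisms, so that the relabelling performed by $d_g$ is matched verbatim by the relabelling of $d_h$. Splitting $\sigma$ and $\tau$ into their actions on the "used" and "unused" parts of $\N$ is what allows both conditions to hold, since $|A|=|C|$ and $|B|=|D|$ force the complements $\N\setminus A,\N\setminus C$ (and likewise $\N\setminus B,\N\setminus D$) to be order-isomorphic.
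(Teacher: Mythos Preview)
Your proof is correct and follows essentially the same approach as the paper's. Both construct a self-homeomorphism of $(I_\N,\Tau)$ of the form $x\mapsto \sigma x\tau$ with $\sigma,\tau\in\Sym(\N)$ built by piecing together a bijection on the finite ``used'' parts ($\dom g\leftrightarrow\dom h$, $\im g\leftrightarrow\im h$) with the canonical order-isomorphisms on the complements, and then verify the intertwining identity $d_g = d_h\circ\phi|_{{\uparrow}g}$ (which the paper writes equivalently as $d_g^{-1}\phi\, d_h=\id$). One minor point: your formulas $\sigma{\restriction}_{\N\setminus C}=i_A\circ i_C^{-1}$ and $\tau{\restriction}_{\N\setminus B}=i_D\circ i_B^{-1}$ only type-check if read as right-to-left composition, whereas the paper (and your formula for $d_g$) uses left-to-right; you may want to write $i_C^{-1}\circ i_A$ and $i_B^{-1}\circ i_D$ to stay consistent with the paper's conventions.
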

\begin{proof}
Let \(\theta_1\) and  \(\theta_2\) be the unique topologies on \(I_\N\) such that \(d_g:({\uparrow}g, \Tau) \to (I_\N, \theta_1)\) and \(d_h:({\uparrow}h, \Tau) \to (I_\N, \theta_2)\) are homeomorphisms.
Note that \(\mathcal{F}_{\Tau, g}\) is the set of neighbourhoods of \(\varnothing\) with respect to \(\theta_1\), and similarly \(\mathcal{F}_{\Tau, h}\) is the set of neighbourhoods of \(\varnothing\) with respect to \(\theta_2\).



    It suffices to show that \(\theta_1=\theta_2\).  
    Let \(k: \dom(g) \to \dom(h)\) be a bijection.
     Let \[k_d=k \cup (i_{\dom(g)}^{-1} \circ i_{\dom(h)})\in \Sym(\N).\] Set \[k_i=g^{-1}k_dh \cup (i_{\im(g)}^{-1} \circ i_{\im(h)}) \in \Sym(\N).\] Then \(\phi:I_\N \to I_\N\) defined by \((x)\phi= k_d^{-1}xk_i\) is a homeomorphism from \((I_\N, \Tau)\) to \((I_\N, \Tau)\). Observe that 
     $$(g)\phi=k_d^{-1}gk_i= k^{-1}gk_i= k^{-1}gg^{-1}k_dh=k^{-1}k_dh=k^{-1}kh=h.$$
    Also, for all \(f\in I_\N\) we have
\[(f)d_g^{-1}=i_{\dom(g)}^{-1}fi_{\im(g)}  \cup g.\] 
Hence
\begin{align*}(f)d_g^{-1}\phi=&(i_{\dom(g)}^{-1}fi_{\im(g)}\cup g)\phi\\
=&\left((i_{\dom(g)}^{-1} \circ i_{\dom(h)})^{-1}i_{\dom(g)}^{-1}fi_{\im(g)} (i_{\im(g)}^{-1} \circ i_{\im(h)})\right) \cup (g)\phi\\
=&\left( i_{\dom(h)}^{-1} f i_{\im(h)} \right) \cup h.
\end{align*}
Thus
\[(f)d_g^{-1}\phi d_h=i_{\dom(h)}\left(\left(i_{\dom(h)}^{-1}fi_{\im(h)} \right) \cup h\right)i_{\im(h)}^{-1}=f.\] 
So \(d_g^{-1}\phi d_h:(I_\N, \theta_1)\to (I_\N, \theta_2)\) is the identity function. Being a composition of homeomorphisms it is also a homeomorphism and so \(\theta_1=\theta_2\).
\end{proof}

\begin{definition}\label{good_filter_def}
    From now on we denote the filter \(\mathcal{F}_{\Tau,g}\) by \(\mathcal{F}_{\Tau,|g|}\)
    (this is well defined by \cref{filter_independant_of_size}). We say that a filter \(\mathcal{F}\) on $I_\N$ is {\em good} if  \(\mathcal{F}= \mathcal{F}_{\Tau, n}\) for some \(n\in \N\) and some second countable semigroup topology on \(I_\N\) containing \(\Itwo\).
    %
\end{definition}
\begin{lemma}\label{newlabel}
    If \(\mathcal{F}\) is a filter, then \(\mathcal{F}\) is good if and only if there is a second countable semigroup topology \(\mathcal{T}\) on \(I_\N\) containing \(\Itwo\) such that \(\mathcal{F}\) is the set of neighbourhoods of \(\varnothing\).
\end{lemma}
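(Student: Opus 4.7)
The plan is to prove the two implications separately, with the backward direction being essentially trivial and the forward direction reducing to transporting a topology from $\uparrow g$ to $I_\N$ using the map $d_g$ from a well-chosen idempotent~$g$.

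For the ``if'' direction, the idea is to just take $n=0$ and $g=\varnothing$. Then $|\varnothing|=0$, $\uparrow\varnothing=I_\N$, and, because $i_\varnothing:\N\to\N$ is the identity, $d_\varnothing$ is the identity map on $I_\N$. Consequently $\mathcal{F}_{\Tau,\varnothing}=\{N\cap I_\N:N\text{ a neighbourhood of }\varnothing\text{ in }\Tau\}$, which is exactly the neighbourhood filter of $\varnothing$. Hence if $\mathcal{F}$ is the neighbourhood filter of $\varnothing$ in some second countable semigroup topology $\Tau$ containing $\Itwo$, then $\mathcal{F}=\mathcal{F}_{\Tau,0}$ and so $\mathcal{F}$ is good.

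For the forward direction, suppose $\mathcal{F}=\mathcal{F}_{\Tau,n}$ for a second countable semigroup topology $\Tau$ containing $\Itwo$. The plan is to pick the finite idempotent $g=\id_n\in I_\N$ (so that $|g|=n$) and to transport the subspace topology from $\uparrow g$ to $I_\N$ via $d_g$. Concretely, I would define $\theta$ to be the unique topology on $I_\N$ making $d_g\colon(\uparrow g,\Tau|_{\uparrow g})\to(I_\N,\theta)$ a homeomorphism, and then verify that $\theta$ has all the required properties. Second countability of $\theta$ is inherited from $\Tau$ via the subspace. Because $g$ is idempotent, $\uparrow g$ is an inverse subsemigroup of $I_\N$, so $\Tau|_{\uparrow g}$ makes $\uparrow g$ a topological semigroup; combined with \cref{addendum}, which says $d_g$ is an algebraic isomorphism, this yields that $\theta$ is a semigroup topology on $I_\N$. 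Containment of $\Itwo$ in $\theta$ follows by checking that each subbasic set $U_{x,y}$ and $W_x$ of $\Itwo$ pulls back through $d_g$ to a set of the form $U_{(x)i_{\dom(g)},(y)i_{\im(g)}}\cap\uparrow g$ or $W_{(x)i_{\dom(g)}}\cap\uparrow g$, which is already open in $\Itwo|_{\uparrow g}\subseteq\Tau|_{\uparrow g}$.

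It then remains to identify the neighbourhood filter of $\varnothing$ in $\theta$ with $\mathcal{F}$. The key point (and perhaps the only subtle one) is that $\uparrow g=\bigcap_{(x,y)\in g}U_{x,y}$ is a finite intersection of $\Itwo$-open sets, hence is $\Tau$-open; so the $\Tau|_{\uparrow g}$-neighbourhoods of $g$ are precisely the sets $N\cap\uparrow g$ for $N$ a $\Tau$-neighbourhood of $g$. Pushing this collection forward through the homeomorphism $d_g$ gives exactly $\mathcal{F}_{\Tau,g}=\mathcal{F}_{\Tau,n}=\mathcal{F}$ by \cref{filter_independant_of_size} and \cref{good_filter_def}. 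The main thing one has to be careful about is not confusing subspace neighbourhoods with ambient neighbourhoods; once one notes that $\uparrow g$ is $\Tau$-open this ceases to be an issue, so I do not anticipate any genuine obstacle beyond bookkeeping.
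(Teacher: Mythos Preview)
Your proposal is correct and follows essentially the same route as the paper: the backward direction via $g=\varnothing$, and the forward direction by choosing the idempotent $g=\id_n$, transporting the subspace topology on ${\uparrow}g$ to $I_\N$ via the isomorphism $d_g$ of \cref{addendum}, and using openness of ${\uparrow}g$ in $\Tau$ to match neighbourhood filters. You are in fact slightly more explicit than the paper in verifying $\Itwo\subseteq\theta$ by pulling back subbasic sets, which the paper leaves implicit.
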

\begin{proof}
\((\Leftarrow):\) In this case \(\mathcal{F}=\mathcal{F}_{\mathcal{T}, \varnothing}\).

    \((\Rightarrow):\) Suppose that \(\mathcal{F}=\mathcal{F}_{\mathcal{T},n}\) for some \(n\in \N\). 
Let \(\id_n\) be the identity function on \(n\). 
By \cref{addendum}, \(d_{\id_n}\) is an isomorphism from \({\uparrow}\id_n\) to \(I_\N\) which maps \(\id_n\) to \(\varnothing\). 
Let the semigroup ${\uparrow}\id_n$ have the subspace topology inherited from $(I_\N,\Tau)$, and
\(\theta\) be the unique topology on \(I_\N\) such that 
\(d_{\id_n}: {\uparrow}\id_n \to (I_\N, \theta)\) is a topological isomorphism. Clearly, $\theta$ is a second countable semigroup topology. Since ${\uparrow}\id_n$ is open in $(I_\N, \Tau)$ we get that \((\mathcal{F}_{\mathcal{T},n})d_{\id_n}^{-1}\) is the set of all neighbourhoods of \(\id_n\) in ${\uparrow}\id_n$. Thus  \(\mathcal{F}_{\mathcal{T},n}=((\mathcal{F}_{\mathcal{T},n})d_{\id_n}^{-1})d_{\id_n}\) is the set of all neighbourhoods of $\varnothing$ in $(I_\N,\theta)$.
Hence \(\mathcal{F}_{\theta, 0}= \mathcal{F}_{\Tau, n}\).
\end{proof}

\begin{definition}\label{de-data}
    If \(\mathcal{T}\) is a second countable semigroup topology on \(I_{\N}\) containing \(\Itwo\), then we define the {\em filter data} of \(\mathcal{T}\) to be the sequence
    \[\mathfrak{F}_\mathcal{T}:=(\mathcal{F}_{\mathcal{T}, n})_{n\in \omega}.\]
\end{definition}

\begin{lemma}\label{increasing_neighbourhoods}
    If \(\mathfrak{F}_{\Tau}=(\mathcal{F}_{\Tau,n})_{n\in \omega}\) is the filter data of a second countable semigroup topology \(\Tau\supseteq \Itwo\) on \(I_{\mathbb{N}}\), then \(\mathcal{F}_{\Tau,n} \subseteq \mathcal{F}_{\Tau,m}\) for all \(n\leq m\).
\end{lemma}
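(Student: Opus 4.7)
The plan is to use \cref{filter_independant_of_size} to specialise to the base elements $g_n = \id_n \subseteq \id_m = g_m$. Under this choice, a generic element of $\mathcal{F}_{\Tau,n}$ has the form $A = (N\cap {\uparrow}\id_n)d_{\id_n}$ for some $\Tau$-neighbourhood $N$ of $\id_n$; because $\mathcal{F}_{\Tau,m}$ is a filter (it is the neighbourhood filter of $\varnothing$ in the topology supplied by \cref{newlabel}), to conclude $A\in \mathcal{F}_{\Tau,m}$ it suffices to exhibit a $\Tau$-neighbourhood $N'$ of $\id_m$ satisfying $(N'\cap {\uparrow}\id_m)d_{\id_m}\subseteq A$.

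The key device is the continuous self-map $\phi$ of $I_\N$ defined by $(x)\phi = t x t^{-1}$, where $t:=i_{\{0,\dots,m-n-1\}}\in I_\N$ is regarded as the partial bijection $y\mapsto y+(m-n)$ on $\N$. Continuity of $\phi$ is immediate from the fact that $\Tau$ is a semigroup topology, and a direct calculation gives $(\id_m)\phi = \id_n$; I then take $N' := (N)\phi^{-1}$, which is a $\Tau$-neighbourhood of $\id_m$.

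The heart of the argument is to verify two companion properties of $\phi$: (i) $\phi({\uparrow}\id_m)\subseteq {\uparrow}\id_n$; and (ii) $(x)d_{\id_m} = ((x)\phi)d_{\id_n}$ for all $x\in {\uparrow}\id_m$. Both can be derived from the factorisation identity $d_{\id_m} = d_{\id_{m-n}}\circ d_{\id_n}$ on ${\uparrow}\id_m$ (which in turn follows from the chain rule $i_{\{0,\dots,n-1\}}\cdot i_{\{0,\dots,m-n-1\}}=i_{\{0,\dots,m-1\}}$), together with the observation that $\phi$ restricted to ${\uparrow}\id_m$ implements $d_{\id_{m-n}}$ read off through $d_{\id_n}$. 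Given (i) and (ii), for any $x\in N'\cap {\uparrow}\id_m$ we have $(x)\phi\in N\cap {\uparrow}\id_n$, hence $(x)d_{\id_m} = ((x)\phi)d_{\id_n} \in A$, which delivers the desired inclusion.

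The main obstacle I anticipate is the bookkeeping in (ii): expressing $txt^{-1}$ explicitly as a partial bijection on $\N$ requires tracking both where each composition is defined and how the various shifts interlock, so choosing the correct $t$ (namely $i_{\{0,\dots,m-n-1\}}$, rather than some other shift) is where the argument has to be designed carefully; once that choice is made, the remaining verification is a direct, if mildly tedious, computation.
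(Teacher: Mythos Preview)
Your proposal is correct and follows essentially the same strategy as the paper: choose a continuous self-map of $(I_\N,\Tau)$ given by conjugation which sends $\id_m$ to $\id_n$ and intertwines $d_{\id_m}$ with $d_{\id_n}$, then pull back neighbourhoods. The only real difference is the choice of conjugating element: the paper uses a permutation $k\in\Sym(\N)$ (fixing $\{0,\ldots,n-1\}$ and sending $\{m,m+1,\ldots\}$ order-isomorphically to $\{n,n+1,\ldots\}$) and verifies directly that $i_n=i_mk$, whereas you use the partial bijection $t=i_{\{0,\ldots,m-n-1\}}$. Your choice is arguably tidier, since then $\phi=d_{\id_{m-n}}$ on ${\uparrow}\id_{m-n}$ and the identity you need is exactly the factorisation $d_{\id_m}=d_{\id_{m-n}}\,d_{\id_n}$, which in turn reduces to $i_n\,i_{m-n}=i_m$; the paper instead obtains a bijection $\phi_k$ and checks $\phi_k^{-1}d_{\id_m}=d_{\id_n}$ by unwinding the definition of $k$. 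The bijectivity the paper gains from using a permutation is not actually needed for the inclusion $\mathcal{F}_{\Tau,n}\subseteq\mathcal{F}_{\Tau,m}$, so nothing is lost in your version.
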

\begin{proof}
    Assume without loss of generality that  \(n<m\).
Let \(k\in I_\N\) be such that \(\operatorname{id}_n \subseteq k\) and \(k\) bijectively maps \(\N\setminus 
 (m\setminus n)\) to \(\N\) in an order preserving fashion.
Observe that the function \(\phi_k:{\uparrow}\operatorname{id}_m \to{\uparrow}\operatorname{id}_n\) defined by
\[(x)\phi_k=k^{-1}xk\]
is a continuous bijection with inverse \(x\mapsto (kxk^{-1})\cup \operatorname{id}_{m\setminus n}\).  Since \(\operatorname{id}_{\N\setminus 
 (m\setminus n)}\operatorname{id}_m=\operatorname{id}_n\) and $k{\restriction}_n=\id_n$, then
\[(\operatorname{id}_m)\phi_k=k^{-1}\operatorname{id}_m k=\operatorname{id}_n \operatorname{id}_m \operatorname{id}_n=\operatorname{id}_n.\]

By \(\theta\) we denote the subspace topology on ${\uparrow} \operatorname{id}_n$ inherited from $(I_\N,\Tau)$. 
Hence if \(N\in \mathcal{F}_{\Tau,n}\), then \((N)d_{\operatorname{id}_n}^{-1}\) is a neighbourhood of \(\operatorname{id}_n\) with respect to  \(\theta\) and hence \((N)d_{\operatorname{id}_n}^{-1}\phi_k^{-1}\) is a neighbourhood of \(\operatorname{id}_m\) with respect to  \(\theta\).
To show \(N\in \mathcal{F}_{\Tau,m}\), it therefore suffices to check \((N)d_{\operatorname{id}_n}^{-1}\phi_k^{-1} d_{\operatorname{id}_m}=N\).

We show in fact that \(\phi_k^{-1} d_{\operatorname{id}_m}= d_{\operatorname{id}_n}\). Note that \(\phi_k, d_{\operatorname{id}_m}, d_{\operatorname{id}_n}\) are all injections and we have by definition that
\[\dom(\phi_k)= {\uparrow}\operatorname{id}_m,\quad \im(\phi_k)={\uparrow}\operatorname{id}_n, \quad \hbox{and } \quad
\im(d_{\operatorname{id}_m})= \im(d_{\operatorname{id}_n})=I_\N.\]
Thus \(\phi_k^{-1}\) is a bijection from \({\uparrow}\operatorname{id}_n\) to \({\uparrow}\operatorname{id}_m\).
For all \(x\in {\uparrow}\operatorname{id}_n\) we have
\[(x)d_{\operatorname{id}_n}=i_{n}x i_{ n}^{-1} \text{ and }(x)\phi^{-1}_k d_{\operatorname{id}_m}= i_{m}(kxk^{-1} \cup \operatorname{id}_{m\setminus n})i_{m}^{-1} =( i_{m}k)x( i_{m}k)^{-1} .\]
Thus we need only show that \(i_{n}=i_{m}k\). This is immediate as \(i_{n}:\N \to \N\setminus n\), \(i_{m}:\N \to \N\setminus m\), and \(k{\restriction}_{\N\setminus m}: \N\setminus m \to \N \setminus n \) are all order isomorphisms.
\end{proof}

The next major step is to show in \cref{wany} that every good filter has a convenient basis (we will call it a basis consisting of wany sets). Furthermore, we will show that if this basis is particularly nice (consisting of finitely wany sets), then the good filter arises from a waning function.

Recall that by \([X]^{<\infty}\) we denote the set of all finite subsets of a set \(X\).

\begin{definition}
    If \(\mathfrak{Y}\subseteq [\N]^{<\infty}\) is non-empty, then we say that a set \(S\subseteq I_\N\) is \(\mathfrak{Y}\)-{\em wany} if there is \(n\in \N\) such that
    \[S= N_{n,\mathfrak{Y}}:=\makeset{f\in I_{\N}}{\( \dom(f)\cap n=\varnothing\) and there is \(Y\in \mathfrak{Y}\) with \(\im(f)\cap Y=\varnothing\)}.\]
    We say a set is {\em wany} if it is \(\mathfrak{Y}\)-wany for some \(\mathfrak{Y}\).
    We say that a set is {\em finitely wany} if it is \(\mathfrak{Y}\)-wany for some finite \(\mathfrak{Y}\).
\end{definition}
\begin{remark}\label{remark}
    Note that  for example if \(\mathfrak{Y}=\{\varnothing\}\), then
    \[N_{n,\mathfrak{Y}}:=\makeset{f\in I_{\N}}{\( \dom(f)\cap n=\varnothing\)},\]
    if \(\mathfrak{Y}=\{\{0\}\}\), then
    \[N_{n,\mathfrak{Y}}:=\makeset{f\in I_{\N}}{\( \dom(f)\cap n=\varnothing\) \hbox{ and }\(0 \notin \im(f)\)},\]
     if \(\mathfrak{Y}=\{\{0,1 ,\ldots, k\}\}\), then
    \[N_{n,\mathfrak{Y}}:=\makeset{f\in I_{\N}}{\( \dom(f)\cap n=\varnothing\) \hbox{ and }\(\im(f)\cap \{0,1 \ldots, k\}=\varnothing\)},\]
   if \(\mathfrak{Y}=\makeset{Y\subseteq r}{\(|Y|= r-3\)}\), then
    \[N_{n,\mathfrak{Y}}:=\makeset{f\in I_{\N}}{\( \dom(f)\cap n=\varnothing\) and \(|\im(f)\cap r|\leq 3\)},\]
    etc.
    So all the usual sets we've used to define waning function topologies are of this form.    
\end{remark}

From here onward we denote the set of finite elements of \(I_\N\) by \(I_\N^{<\infty}\) (not to be confused with \([I_\N]^{<\infty}\) which is the set of finite subsets of \(I_\N\)).

\begin{definition}
    We say that a filter \(\mathcal{F}\) on \(I_\N\) is {\em defined by a waning function} \(f\), if \(\mathcal{F}\) is the filter of all neighbourhoods of \(\varnothing\) with respect to the topology \(\mathcal{T}_f\) (as defined in \cref{tau}).
\end{definition}

\begin{definition}
    If \(X\subseteq Y\) are sets and \(\mathcal{F}\) is a filter on \(Y\), then we define the {\em trace} of this filter on \(X\) by
    \[\mathcal{F}|_{X}:= \makeset{F\cap X}{\(F\in \mathcal{F}\)}.\]
\end{definition}

\begin{theorem}\label{wany}
If \(\mathcal{F}\) is a good filter on \(I_{\N}\), then the following hold:
\begin{enumerate}[\rm (i)]
    \item The filter \(\mathcal{F}\) has a filter base consisting of wany sets.
     \item If \(\mathcal{F}\) has a filter base consisting of finitely wany sets,
    then \(\mathcal{F}\) can be defined by a waning function.
         \item If the filter \(\mathcal{F}|_{I_{\N}^{<\infty}}\) on \(I_\N^{<\infty}\) has a  base of the form \((S_i\cap I_{\N}^{<\infty})_{i\in \N}\) where each set \(S_i\) is finitely wany, 
    then there is a good filter \(\mathcal{F}'\) defined by a waning function such that \(\mathcal{F}|_{I_{\N}^{<\infty}}=\mathcal{F}'|_{I_{\N}^{<\infty}}\).
\end{enumerate}
\end{theorem}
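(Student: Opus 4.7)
By \cref{newlabel}, I may assume that $\mathcal F$ is the filter of $\Tau$-neighbourhoods of $\varnothing$ for some second countable semigroup topology $\Tau$ on $I_\N$ containing $\Itwo$.

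\emph{Part (i).} Given an open $U \in \mathcal F$, I aim to produce a wany set $N_{n,\mathfrak Y} \in \mathcal F$ with $N_{n,\mathfrak Y} \subseteq U$. Using continuity of multiplication at $\varnothing$, pick an open $V \subseteq U$ containing $\varnothing$ with $V \cdot V \subseteq U$; since $\Itwo \subseteq \Tau$, the set $V_n := V \cap \{f : \dom(f) \cap n = \varnothing\}$ is a $\Tau$-neighbourhood of $\varnothing$ for every $n \in \N$. The natural candidate is
\[
\mathfrak Y_n := \bigl\{Y \in [\N]^{<\infty} : \{f \in I_\N : \dom(f) \cap n = \varnothing,\ \im(f) \cap Y = \varnothing\} \subseteq U\bigr\},
\]
so that $N_{n,\mathfrak Y_n} \subseteq U$ by construction. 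The essential step is to verify that for some $n$, $V_n \subseteq N_{n,\mathfrak Y_n}$, i.e.\ every $f \in V_n$ admits some $Y \in \mathfrak Y_n$ with $Y \cap \im(f) = \varnothing$. I expect this to be the main technical obstacle, which I would attack by a factorisation argument in $I_\N$: given any $f' \in I_\N$ with $\dom(f') \cap n = \varnothing$ and $\im(f') \cap Y = \varnothing$ for a ``fresh'' finite $Y$ disjoint from $n \cup \im(f)$, write $f' = g_1 \cdot g_2$ through a suitable intermediate set $C$ so that $g_1, g_2 \in V$, whence $f' \in V \cdot V \subseteq U$ and therefore $Y \in \mathfrak Y_n$.

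\emph{Part (ii).} Suppose $\mathcal F$ has a filter base of finitely wany sets. By \cref{cor-nbhds-zero} the neighbourhood filter of $\varnothing$ in $\Tau_f$ depends only on $(0)f \in \omega + 1$, so it suffices to extract a single value
\[
k := \min\bigl\{m \in \omega + 1 : \{h \in I_\N : \dom(h) \cap r = \varnothing,\ |\im(h) \cap r| \leq m\} \in \mathcal F \text{ for all sufficiently large } r\bigr\},
\]
and set $f$ to be the waning function with $(0)f = k$ (take $(i)f := \max(0, k - i)$ for $k \in \omega$, or $f \equiv \omega$ if $k = \omega$). The inclusion $\mathcal F_{\Tau_f, 0} \subseteq \mathcal F$ is immediate from the definition of $k$ together with the explicit basis $W_{f,\varnothing,r}$ from \cref{neighbourhood_basis}. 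The reverse inclusion is the main content: for each finitely wany $N_{n,\mathfrak Y} \in \mathcal F$ I would show $W_{f,\varnothing,r} \subseteq N_{n,\mathfrak Y}$ for some $r$. The key mechanism is that goodness of $\mathcal F$ forces closure under $\rho_\sigma^{-1}$ for every $\sigma \in \Sym(\N)$, and a direct calculation gives $\rho_\sigma^{-1}(N_{n,\mathfrak Y}) = N_{n,\sigma^{-1}(\mathfrak Y)}$; combining this symmetry with closure under finite intersections enriches $\mathfrak Y$ (up to filter-equivalence) to contain every $(r-k)$-subset of $r$ for an appropriate $r$, which is precisely the combinatorial condition that yields the desired containment.

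\emph{Part (iii).} The argument parallels (ii) but is performed at the level of the trace $\mathcal F|_{I_\N^{<\infty}}$. Define
\[
k := \min\bigl\{m \in \omega + 1 : \{h \in I_\N^{<\infty} : \dom(h) \cap r = \varnothing,\ |\im(h) \cap r| \leq m\} \in \mathcal F|_{I_\N^{<\infty}} \text{ for all large } r\bigr\};
\]
this is well-defined because finite partial bijections already witness the image-intersection constraint. Let $f$ be the waning function with $(0)f = k$ as in (ii) and set $\mathcal F' := \mathcal F_{\Tau_f, 0}$. The equality $\mathcal F|_{I_\N^{<\infty}} = \mathcal F'|_{I_\N^{<\infty}}$ then follows from a direct comparison of the finitely wany bases on each side after restriction to $I_\N^{<\infty}$, using the same symmetry/closure argument as in (ii); the simplification here is that no claim is required for infinite partial bijections, so the reasoning only needs to yield agreement on the finitary part.
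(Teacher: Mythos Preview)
Your plan for parts (ii) and (iii) is essentially the paper's: both extract the critical value $k$ via membership in $\mathcal F$ (or its trace) of the symmetric sets $\{h:\dom(h)\cap r=\varnothing,\ |\im(h)\cap r|\le m\}$, and both exploit closure of $\mathcal F$ under right multiplication by $\Sym(\N)$ to reduce an arbitrary finitely wany basic set $N_{n,\mathfrak Y}$ to one of this symmetric form. The paper packages the argument as two claims (one showing every finitely wany basis element contains such a symmetric set, the other characterising which $l$ give sets in $\mathcal F|_I$), and treats (ii) and (iii) simultaneously by letting $I\in\{I_\N,I_\N^{<\infty}\}$; but the substance matches your outline.

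Part (i), however, has a genuine gap. Choosing a \emph{fresh} $Y$ disjoint from $n\cup\im(f)$ does not work: an arbitrary such $Y$ need not lie in $\mathfrak Y_n$, and there is no reason a general $f'$ with $\im(f')\cap Y=\varnothing$ should factor as $g_1g_2$ with $g_1,g_2\in V$ merely because $f\in V$ and $Y$ avoids $\im(f)$. The missing ingredient, which you never invoke, is that $\Tau\subseteq\Ifour$ (a result from \cite{main} quoted in the paper). This lets one write the open set $V\in\Ifour$ as a union $\bigcup_{(F,X,Y)\in D}U_{F,X,Y}$ of $\Ifour$-basic sets; since $\varnothing\in V$ some $(\varnothing,A,B)\in D$, and one fixes $n\supseteq A\cup B$ once and for all. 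The decisive computation is then that for each $(F,X,Y)\in D$ (after absorbing $n$ into $X$) one has $U_{\varnothing,n,B}\cdot U_{F,X,Y}=U_{\varnothing,n,Y}$. Thus the correct $\mathfrak Y$ consists of the third coordinates already appearing in the $\Ifour$-decomposition of $V$, not arbitrary fresh sets, and the factorisation for a given $f\in V_n$ routes through the specific basic piece $U_{F,X,Y}\subseteq V$ containing $f$. Without $\Tau\subseteq\Ifour$ you have no structural description of $V$ near $f$, and your proposed factorisation through ``a suitable intermediate set $C$'' cannot be made to land both factors in $V$.
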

\begin{proof}
We start with point (1).
    Let \(\mathcal{F}\) be a good filter and \(N\in \mathcal{F}\) be arbitrary. 
    We will construct a wany set \(N''''\in \mathcal{F}\) with \(N''''\subseteq N\).
    By \cref{newlabel}, \(\mathcal{F}\) is the set of neighbourhoods of $\varnothing$ in a semigroup topology $\Tau$ on $I_\N$. Theorem~5.1.5(vii) from \cite{main} implies that  \(\Tau\subseteq \Ifour\). Then there exists an \(N'\in \mathcal{F}\cap \Ifour\) such that \(N' N' \subseteq N\).

    For all \((F, X, Y)\in  I_\N^{<\infty} \times [\N]^{<\infty} \times [\N]^{<\infty}\), define 
    \[U_{F, X, Y}:=\makeset{f\in I_{\N}}{\(F\subseteq f\), \(\dom(f)\cap X=\im(f)\cap Y=\varnothing\)}.\]
    As \(N'\in \Ifour\), there exists
    \(D\subseteq I_\N^{<\infty} \times [\N]^{<\infty} \times [\N]^{<\infty} \)
    such that
    \[N'=\bigcup_{(F, X, Y)\in D} U_{F, X, Y}.\]

Moreover, there must be some triple \((\varnothing, A, B) \in D\) as \(\varnothing\in N'\). Let \(n\in \N\) be such that \(A\cup B \subseteq n\) and define
\[D' =\makeset{(F, X\cup n, Y)}{\((F, X, Y)\in D\) and \(U_{F, X\cup n, Y} \neq \varnothing\)}.\]
Let
\[N'':=\bigcup_{(F, X, Y)\in D'} U_{F, X, Y}=\bigcup_{(F, X, Y)\in D} U_{\varnothing, n, \varnothing}\cap U_{F, X, Y}=N'\cap U_{\varnothing, n, \varnothing}.\]
Since $U_{\varnothing, n,\varnothing}$ is an open neighbourhood of $\varnothing$ in  $\Itwo\subseteq \Tau$, we get that $N''\in \mathcal{F}$. Let \(N''':=N''N''\subseteq N'N'\subseteq N\).
\begin{claim}\label{claim516}
    If \((F, X, Y)\in D'\), then 
    \[U_{\varnothing, n, B}U_{F, X, Y}=U_{\varnothing, n, Y}.\]
\end{claim}
\begin{proof}
    The containment \(\subseteq\) is immediate. Let \(h\in U_{\varnothing, n, Y}\) be arbitrary. 
    Note that \(U_{F, X, Y}\) is nonempty so \(\dom(F)\cap X= \im(F)\cap Y=\varnothing\).

    Let \(g_1=F\) and let \(g_2\) be any partial bijection with image equal to \(\im(h) \setminus \im(g_1)\) and domain disjoint from \(n\cup X\cup Y\cup \dom(F)\cup \im(F)\). Let \(g=g_1\cup g_2\in I_\N\).
    Note that \(g\in U_{F, X, Y}\) as \(\dom(F)\cap X=\im(F)\cap Y=\varnothing\) and \(\im(h)\cap Y=\varnothing\).
    
    Let \(f= hg^{-1}= h(g_1\cup g_2)^{-1}\).
    Note that \(\dom(f)\subseteq \dom(h)\), so 
    \[\dom(f)\cap n=\dom(h)\cap n=\varnothing \quad \text{and}\quad\im(f)\subseteq \dom(F)\cup \dom(g_2).\] 
    By the definition of \(D'\), \(X\supseteq n\supseteq B\), and so
    \begin{align*}\im(f)\cap B \subseteq (\dom(F)\cup \dom(g_2))\cap B=(\dom(F)\cap B) \cup (\dom(g_2)\cap B)=\\
    \dom(F)\cap B\subseteq \dom(F)\cap X= \varnothing.
    \end{align*}
    Hence \(f\in U_{\varnothing, n, B}\) and \(h=fg \in U_{\varnothing, n, B}U_{F, X, Y}\) as required.
\end{proof}

Let $$\mathfrak{Y}:= \makeset{Y\in [\N]^{<\infty}}{$Y$ \hbox{ appears as the third entry of an element of} $D'$}.$$ 
Since $(\varnothing, A, B) \in D$, we have \((\varnothing, n, B)\in D'\).
Thus by \cref{claim516}, we have 
\[\bigcup_{Y\in \mathfrak{Y}} U_{\varnothing, n, Y}=\bigcup_{(F, X, Y)\in D'} U_{\varnothing, n, B} U_{F, X, Y}\subseteq \left(\bigcup_{(F, X, Y)\in D'} U_{F, X, Y}\right)\left(\bigcup_{(F, X, Y)\in D'} U_{F, X, Y}\right)=N''N''=N''' \]
For convenience denote \(N'''':=\bigcup_{Y\in \mathfrak{Y}} U_{\varnothing, n, Y}\). Note that \(N''''\) is \(\mathfrak{Y}\)-wany. As $n\subseteq X$ for all \((F, X, Y)\in D'\) we have 
\( U_{F, X, Y}\subseteq U_{F, n, Y}\subseteq U_{\varnothing, n, Y}\). 
It follows that \(N''\subseteq N''''\) and subsequently \(N''''\in \mathcal{F}\). Since $N$ was chosen arbitrarily,  \(N''''\subseteq N'''\subseteq N\), and \(N''''\) is a wany set, the result follows.

We now prove the parts (2) and (3) of the theorem together by proving the following statement.
\medskip
\begin{enumerate}
     \item[(2)\&(3)]  If \(I\in \{I_\N, I_\N^{<\infty}\}\) and \(\mathcal{F}|_{I}\) has a filter base of the form \((S_i\cap I)_{i\in \N}\) where each set \(S_i\) is finitely wany, 
    then there is a good filter \(\mathcal{F}'\) defined by a waning function such that \(\mathcal{F}|_{I}=\mathcal{F}'|_{I}\).
\end{enumerate}
\medskip

Suppose that \(\mathcal{F}\) and \(I\) are as hypothesized. 
We define 
\[K=\makeset{l\in \N}{\(\makeset{f\in I}{\((l+1)\not\subseteq\im(f)\)}\in \mathcal{F}|_I\)}.\]
\begin{claim}\label{tidy_basis_claim}
    Let \(l, n\in \N\) and \(X\in [\N]^{<\infty}\) be arbitrary such that \(l<|X|\). Then 
    \[l\in K \iff \makeset{f\in I}{\(\dom(f)\cap n=\varnothing\) and \(|\im(f)\cap X|\leq l\)}\in \mathcal{F}|_I.\]
\end{claim}
\begin{proof}
    We fix \(l, n\in \N\) and \(X\in [\N]^{<\infty}\) such that \(l<|X|\). 
    
    \((\Leftarrow):\) Let \(\sigma\in \Sym(\N)\) be such that \(l+1\subseteq (X)\sigma\). Then
    \begin{align*}
       \big( \makeset{f\in I}{\(\dom(f)\cap n=\varnothing\) and \(|\im(f)\cap X|\leq l\)}\big)\sigma &\subseteq \makeset{f\in I}{ \(|\im(f)\cap (X)\sigma|\leq l\)}\\
        &\subseteq \makeset{f\in I}{\(\im(f)\not\supseteq (l+1)\)}.
    \end{align*}
Since $\F$ is a good filter and elements of $\Sym(\N)$ act bijectively on $I_\N^{<\infty}$ and they are homeomorphisms of $I_\N$ endowed with any shift-continuous topology, we get that the filter \(\mathcal{F}|_I\) is closed under right multiplication by elements of \(\Sym(\N)\). It follows that 
\[\makeset{f\in I}{\(\im(f)\not\supseteq (l+1)\)} \in \mathcal{F}|_I,\]
so \(l\in K\).

    \((\Rightarrow):\) We have that \(\makeset{f\in I}{\(|\im(f)\cap (l+1)|\leq l\)}\in \mathcal{F}|_I\).
    Let \(r=|X|\).
    Let $G_r$ be the finite subgroup of $\Sym(\N)$ consisting of all permutations with support contained in \(r\).
    Note that
    \begin{align*}
    \makeset{f\in I}{\(|\im(f)\cap r|\leq l\)}&=\makeset{f\in I}{\(\im(f)\cap r\) has no subset of size \(l+1\)}\\
    &=\makeset{f\in I}{there is no \(\sigma\in G_r\) such that \((\im(f))\sigma \supseteq l+1\)}\\
    &=\makeset{f\in I}{there is no \(\sigma\in G_r\) such that \(\im(f\sigma) \supseteq l+1\)}\\
          &= \bigcap_{\sigma\in G_r}\makeset{f\in I}{\(\im(f\sigma) \not\supseteq l+1\)}\\
           &= \bigcap_{\sigma\in G_r}\makeset{f\in I\sigma^{-1}}{\(\im(f\sigma) \not\supseteq l+1\)} \quad\quad(\text{as }I=I\sigma^{-1},\text{ because }\sigma\text{ is a bijection})\\
        &= \bigcap_{\sigma\in G_r}\makeset{g\sigma^{-1} \in I\sigma^{-1}}{\(\im(g\sigma^{-1}\sigma) \not\supseteq l+1\)}\\
                &= \bigcap_{\sigma\in G_r}\makeset{g\sigma^{-1} \in I\sigma^{-1}}{\(\im(g) \not\supseteq l+1\)}\\
  &= \bigcap_{\sigma\in G_r}\makeset{f\sigma^{-1} \in I}{\(\im(f) \not\supseteq l+1\)}\\
&= \bigcap_{\sigma\in G_r}\makeset{f\in I}{\(\im(f) \not\supseteq l+1\)}\sigma^{-1}.
    \end{align*}
So, since the filter \(\mathcal{F}|_I\) is closed under right multiplication by elements of \(\Sym(\N)\) we have that  \(\makeset{f\in I}{\(|\im(f)\cap r|\leq l\)}\in \mathcal{F}|_I\). As \(\mathcal{F}\) contains all the \(\Itwo\) neighbourhoods of \(\varnothing\), we obtain that
\begin{align*}
    \{f\in I|\dom(f)\cap n=\varnothing &\hbox{ and } |\im(f)\cap r|\leq l\}\\
    &=\makeset{f\in I }{ \(\dom(f)\cap n=\varnothing\)}\cap\makeset{f\in I}{\(|\im(f)\cap r|\leq l\)}  \in \mathcal{F}|_I.
\end{align*}
Let \(\rho\in \Sym(\N)\) be such that \((r)\rho=X\). It follows that
\begin{align*}
\{f\in I| \dom(f)\cap n=\varnothing &\hbox{ and }|\im(f)\cap X|\leq l\}\\
&=\makeset{f\in I}{\(\dom(f)\cap n=\varnothing\) and \(|\im(f)\cap r|\leq l\)}\rho\in \mathcal{F}|_I
\end{align*}
as required.
\end{proof}
\begin{claim}\label{basis_claim}
 If \(F\in \mathcal{F}|_I\), then there exist \(l, n\in \N\) and \(X\in [\N]^{<\infty}\) such that 
  \[U:=\makeset{f\in I}{\(\dom(f)\cap n=\varnothing\) and \(|\im(f)\cap X|\leq l\)}\in \mathcal{F}|_I\]
and $U \subseteq F$.
\end{claim}
\begin{proof}
Let \(F\in \mathcal{F}\) be arbitrary. By the assumption, \(\mathcal{F}|_{I}\) has a filter base of the form \((S_i\cap I)_{i\in \N}\) where each set \(S_i\) is finitely wany.  Hence there is a finite non-empty set \(\mathfrak{Y}\) and \(n\in \N\) such that
\(N_{n,\mathfrak{Y}}\cap I\in \mathcal{F}|_I\) and
\[N_{n,\mathfrak{Y}}\cap I=\makeset{f\in I}{\(\dom(f)\cap n=\varnothing\) and there is \(Y\in \mathfrak{Y}\) with \(\im(f)\cap Y=\varnothing\)}\subseteq F.\] 
Let \(l\) be the largest integer such that \(l\leq |\bigcup \mathfrak{Y}|\) and every subset of \(\bigcup \mathfrak{Y}\) of size \(l\) is disjoint from at least one element of \(\mathfrak{Y}\). Since $0$ satisfies the condition above, the integer $l$ is well defined.

Let \(G\) be the finite subgroup of \(\Sym(\N)\) consisting of all permutations supported on \(\bigcup \mathfrak{Y}\). Then define
\[M:= I\cap \bigcap_{\sigma\in G} N_{n,\mathfrak{Y}}\sigma.\]
Note that \(M\in \mathcal{F}|_I\), and if for some \(f\in I_\N\), \(|\im(f)\cap \bigcup\mathfrak{Y}|\leq l\), then \(\im(f\sigma)\) must be disjoint from an element of \(\mathfrak{Y}\) for all \(\sigma\in G\), so \(f\in M\).
Conversely, if \(f\in M\), then \(|\im(f)\cap \bigcup\mathfrak{Y}|\leq l\) as otherwise there is \(\sigma\in G\) such that \(\im(f\sigma)\) intersects every element of \(\mathfrak{Y}\) and hence \(f\not\in N_{n,\mathfrak{Y}}\sigma^{-1}\).
So 
\[M=\makeset{f\in I}{\(\dom(f)\cap n= \varnothing\) and \(|\im(f)\cap \bigcup \mathfrak{Y}|\leq l\) }\in \mathcal{F}|_I\]
choosing \(X=\bigcup \mathfrak{Y}\) the claim follows.
\end{proof}
If for all \(F\in \mathcal{F}|_I\), there is \(l, n\in \N\) and \(X\in [\N]^{<\infty}\) with \(l= |X|\) such that
  \[\makeset{f\in I}{\(\dom(f)\cap n=\varnothing\) and \(|\im(f)\cap X|\leq l\)}\in \mathcal{F}|_I\]
  and
  \[\makeset{f\in I}{\(\dom(f)\cap n=\varnothing\) and \(|\im(f)\cap X|\leq l\)}\subseteq F,\]
  then we have that \(\mathcal{F}|_I\) is the set of all neighbourhoods of \(\varnothing\) in the subspace topology on \(I\) inherited from \((I_\N, \Itwo)\), and we are done. Otherwise it follows from \cref{basis_claim} and \cref{tidy_basis_claim} that \(K\neq \varnothing\).
  Let \(k= \min(K)\). It then follows from the same two claims that the sets of the form
  \[\makeset{f\in I}{\(\dom(f)\cap n=\varnothing\) and \(|\im(f)\cap X|\leq k\)}\]
  for \(n\in \N\) and \(X\in [\N]^{<\infty}\) with \(|X|>k\) all belong to \(\mathcal{F}|_I\) and are a basis for this filter.
  Hence if \(\mathcal{F}'\) is any filter defined by a waning function \(\phi\) such that 
   \((0)\phi=k\), then \(\mathcal{F}|_I=\mathcal{F}'|_I\).
\end{proof}

We have shown that every good filter has a basis of wany sets and that a basis of finitely wany sets may be used to construct a waning function. To bridge the remaining gap, we show in \cref{finite_done} that a good filter cannot have a basis of wany sets without having a basis of finitely wany sets.

\begin{lemma}\label{inferror_implies_finite_error}
    If \(\mathcal{F}_1, \mathcal{F}_2\) are good filters  and \(\mathcal{F}_1|_{I_{\N}^{<\infty}}= \mathcal{F}_2|_{I_{\N}^{<\infty}}\), then \(\mathcal{F}_1= \mathcal{F}_2\).
\end{lemma}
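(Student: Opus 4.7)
The plan is to prove $\mathcal{F}_1 \subseteq \mathcal{F}_2$, with the reverse inclusion following by symmetry. Fix $N \in \mathcal{F}_1$. Applying \cref{wany}(i), I choose a wany set $W = N_{n, \mathfrak{Y}} \in \mathcal{F}_1$ with $W \subseteq N$. Since $W \cap I_\N^{<\infty} \in \mathcal{F}_1|_{I_\N^{<\infty}} = \mathcal{F}_2|_{I_\N^{<\infty}}$, there is $M \in \mathcal{F}_2$ with $M \cap I_\N^{<\infty} = W \cap I_\N^{<\infty}$; a second application of \cref{wany}(i) to $\mathcal{F}_2$ and $M$ gives a wany $W' = N_{m, \mathfrak{Z}} \in \mathcal{F}_2$ with $W' \subseteq M$, and hence $W' \cap I_\N^{<\infty} \subseteq W \cap I_\N^{<\infty}$. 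The goal then reduces to showing $W' \subseteq W$: once this is done, $W' \in \mathcal{F}_2$ and $W' \subseteq W \subseteq N$ force $N \in \mathcal{F}_2$ as required.

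The inclusion $W' \subseteq W$ is verified pointwise. A finite $f \in W'$ sits immediately in $W \cap I_\N^{<\infty}$. For infinite $f \in W'$, the defining form of wany sets makes them closed under restriction, so every finite sub-function $f\vert_S$ of $f$ lies in $W' \cap I_\N^{<\infty} \subseteq W$. Testing singleton restrictions against the elements of $n$ rules out $\dom(f) \cap n \neq \varnothing$, securing the domain condition. The image condition $\exists Y \in \mathfrak{Y}$ with $\im(f) \cap Y = \varnothing$ follows by a pigeonhole argument whenever $\mathfrak{Y}$ is finite: if it failed, picking witnesses $y_Y \in \im(f) \cap Y$ for each $Y \in \mathfrak{Y}$ and restricting $f$ to the finite preimage of $\{y_Y : Y \in \mathfrak{Y}\}$ would produce a finite sub-function whose image meets every $Y \in \mathfrak{Y}$, contradicting its membership in $W$.

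The main obstacle is the case where $\mathfrak{Y}$ is infinite, since no single finite restriction can simultaneously exhibit witnesses for all $Y \in \mathfrak{Y}$. The plan here is to refine $W$ further inside $\mathcal{F}_1$ using that good filters are closed under right multiplication by elements of $\Sym(\N)$: each translate $W\sigma = N_{n, \sigma(\mathfrak{Y})}$ belongs to $\mathcal{F}_1$, and so does every finite intersection $W \cap W\sigma_1 \cap \cdots \cap W\sigma_k$. Since $f \in W'$ forces $\im(f) \subseteq \N \setminus Z_0$ for some $Z_0 \in \mathfrak{Z}$, I would select finitely many $\sigma_i \in \Sym(\N)$ fixing $\N \setminus Z_0$ pointwise so that the resulting intersection effectively compresses $\mathfrak{Y}$ to a finite family active on $\N \setminus Z_0$, at which point the finite-$\mathfrak{Y}$ pigeonhole argument above applies. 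The delicate technical content is ensuring that this reduction produces a wany subset of $W$ still lying in $\mathcal{F}_1$ whose image condition is witnessed by a finite sub-collection of $\mathfrak{Y}$; this is where I expect the proof's real combinatorial work to lie.
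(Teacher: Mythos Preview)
Your reduction to showing $W' \subseteq W$ is the crux of the problem, and unfortunately that inclusion can genuinely fail, so the plan does not close. A concrete obstruction: take $\mathfrak{Y} = \{\{2i,2i+1\}: i\in\N\}$ and $\mathfrak{Z} = \{\varnothing\}$. Then $W = N_{0,\mathfrak{Y}}$ and $W' = N_{0,\mathfrak{Z}} = I_\N$ satisfy $W'\cap I_\N^{<\infty} = I_\N^{<\infty} = W\cap I_\N^{<\infty}$ (every finite image misses some pair), yet any $f$ with $\im(f) = 2\N$ lies in $W'\setminus W$. Your proposed repair via right $\Sym(\N)$-translates does not help here: the only $Z_0\in\mathfrak{Z}$ is $\varnothing$, so the permutations you allow are required to fix all of $\N$ and are trivial. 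More generally, there is no a priori control on which wany $W'$ falls out of \cref{wany}(i), and the finite-trace hypothesis alone does not force the inclusion you need on infinite elements.

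What the paper does instead is use the one tool your argument never touches: continuity of multiplication in the underlying semigroup topology $\theta_1$ realising $\mathcal{F}_1$. Looking at the product $\varnothing\cdot\id_\N = \varnothing$, continuity yields $W_l\in\mathcal{F}_1$ and $W_r = {\uparrow}\id_n$ with $W_lW_r \subseteq N_{m,\mathfrak{Y}}$. One then passes to a wany $N_{k,\mathfrak{Y}'}\in\mathcal{F}_2$ whose finite part sits inside $W_l$, chooses $g\in N_{k,\mathfrak{Y}'}\setminus N_{m,\mathfrak{Y}}$ (necessarily infinite), and exploits that $W_l$ is $\Ifour$-open: starting from a suitable finite restriction $g'\subseteq g$ in $W_l$, one finds an infinite $g''\in W_l$ with $\im(g'')\cap n = \im(g)\cap n$, and then $g''\cdot{\uparrow}\id_n$ produces an element with the same image as $g$ inside $N_{m,\mathfrak{Y}}$, contradicting that $\im(g)$ meets every $Y\in\mathfrak{Y}$. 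The essential idea you are missing is that multiplication by ${\uparrow}\id_n$ lets a single finite witness in $W_l$ generate infinite elements with prescribed image behaviour; the $\Sym(\N)$-action alone cannot manufacture this.
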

\begin{proof}
Let \(\theta_1, \theta_2\) be second countable semigroup topologies on \(I_\N\) containing \(\Itwo\) such that \(\mathcal{F}_1\) is the set of \(\theta_1\) neighbourhoods of \(\varnothing\) and \(\mathcal{F}_2\) is the set of \(\theta_2\) neighbourhoods of \(\varnothing\).

Suppose for a contradiction that \(\mathcal{F}_1\not\subseteq\mathcal{F}_2\).
By \cref{wany}, we can fix a wany set \(N_{m,\mathfrak{Y}}\in \mathcal{F}_1\setminus \mathcal{F}_2\).
    Consider the product \(\varnothing\operatorname{id}_{\N}=\varnothing\) in the topological semigroup \((I_\N, \theta_1)\).
    Since \((I_\N, \theta_1)\) is a topological semigroup, we can find \(W_{l}\in \mathcal{F}_1\cap \theta_1\) and \(W_{r}\in \theta_1\subseteq \Ifour\) such that \(\id_\N \in W_r\) and \(W_l W_r\subseteq N_{m, \mathfrak{Y}}\). 
 Since open neighborhood bases at $\id_\N$ in $(I_\N,\Itwo)$ and $(I_\N,\Ifour)$ coincide, we lose no generality assuming there is \(n\geq m\) such that
 \(W_r=\uparrow \id_n\).

    As \(W_l\cap I_{\N}^{<\infty}\in \mathcal{F}_1|_{I_{\N}^{<\infty}}=\mathcal{F}_2|_{I_{\N}^{<\infty}}\),  we can (by \cref{wany}(1)) find a wany set \(N_{k, \mathfrak{Y}'}\in \mathcal{F}_2\) such that \(N_{k, \mathfrak{Y}'}\cap I_{\N}^{<\infty}\subseteq W_l\) and \(k\geq n\). 
    Since \(N_{m, \mathfrak{Y}}\notin\mathcal{F}_2\) and 
    $N_{k, \mathfrak{Y}'}\in \mathcal{F}_2$, we get that
    \(N_{k, \mathfrak{Y}'}\not\subseteq N_{m, \mathfrak{Y}}\). On the other hand, \((N_{k, \mathfrak{Y}'}\cap I_{\N}^{<\infty})\cdot {\uparrow} \id_n\subseteq W_lW_r\subseteq N_{m, \mathfrak{Y}}\).
    
    Let \(g\in N_{k, \mathfrak{Y}'}\setminus N_{m, \mathfrak{Y}}\). If $g$ is finite, then $g\in 
    (N_{k, \mathfrak{Y}'}\cap I_{\N}^{<\infty})\cdot \id_{\N}\subseteq (N_{k, \mathfrak{Y}'}\cap I_{\N}^{<\infty})\cdot {\uparrow} \id_n \subseteq  N_{m, \mathfrak{Y}}$, which is a contradiction. Hence
    \(g\) is infinite. Then \(g\not\in N_{m, \mathfrak{Y}}\) and \(k\geq m\geq n\) implies that \(\im(g)\) intersects every element of \(\mathfrak{Y}\). Hence \(N_{m, \mathfrak{Y}}\) contains no elements with the same image as \(g\). 
    
    Let \(g':=g{\restriction}_{(\{0,1 , \ldots, n-1\})g^{-1}}\).
    Since $g\in N_{k, \mathfrak{Y}'}={\downarrow}N_{k, \mathfrak{Y}'}:=\bigcup\{{\downarrow}x: x\in N_{k,\mathfrak{Y}'}\}$, it follows that 
    \(g'\in N_{k, \mathfrak{Y}'}\cap I_{\N}^{<\infty}\subseteq W_l\in \Ifour\). So \(W_l\cap {\uparrow} g'\in \Ifour\) and  \((W_l\cap {\uparrow} g')\cdot {\uparrow}\id_n\subseteq N_{m, \mathfrak{Y}}\).
Since \(g'\in W_l\cap {\uparrow} g'\in \Ifour\), there must be some infinite \(g''\in W_l\cap {\uparrow} g'\) with \(\im(g'')\cap n = \im(g') = \im(g)\cap n\).
So the set  \(g''\cdot {\uparrow} \id_n\) contains an element with the same image as \(g\).
This is a contradiction, as \(g''\cdot {\uparrow} \id_n\subseteq W_l'\cdot {\uparrow} \id_n\subseteq N_{m, \mathfrak{Y}}\), and \(N_{m, \mathfrak{Y}}\) contains no elements with the same image as \(g\). Therefore $\mathcal{F}_1\subseteq \mathcal{F}_2$ and by symmetry $\mathcal{F}_1 = \mathcal{F}_2$.
\end{proof}

\begin{definition}
    We say that a non-empty set \(\mathfrak{Y}\subseteq [\N]^{<\infty}\) is \textit{bad} if there is a good filter \(\mathcal{F}\) and \(n\in \N\) such that \(N_{n,\mathfrak{Y}}\in \mathcal{F}\) and there is no finitely wany set \(S\in \mathcal{F}\)  with \(S \subseteq N_{n,\mathfrak{Y}}\).
\end{definition}

Note that if $\mathfrak{Y}$ is bad and the good filter $\mathcal{F}$ witnesses this, then $\mathcal{F}$ has no filter base consisting of finitely wany sets. We will show (eventually) that there are, in fact, no bad sets. 

Recall that $[\N]^n$ denotes the set of all subsets of $\N$ of cardinality $n$.

\begin{lemma}\label{no bad}
    If there exists a bad set \(\mathfrak{Y}\subseteq [\N]^{<\infty}\), then there is a set \(\mathfrak{Y}'\subseteq [\N]^{<\infty}\) such that the following hold:
    \begin{enumerate}[\rm (1)]
        \item There is a good filter \(\mathcal{F}\) such that \(N_{0, \mathfrak{Y}'}\in \mathcal{F}\), and there is no finitely wany set \(S\), with \(S\cap I_{\N}^{<\infty}\in \mathcal{F}|_{I_\N^{<\infty}} \) and
        \(S\cap I_{\N}^{<\infty}\subseteq N_{0, \mathfrak{Y}'}\).
           \item The elements of \(\mathfrak{Y}'\) are incomparable with respect to containment and if \(Y\in [\N]^{<\infty}\) and \(\makeset{f\in I_\N^{<\infty}}{\(\im(f)\cap Y=\varnothing\)} \subseteq N_{0, \mathfrak{Y}'}\), then there is \(Y'\in \mathfrak{Y}'\) with \(Y'\subseteq Y\).
        \item  For all \(n\in \N\),  the set \([\N]^n\cap \mathfrak{Y}'\) is finite.
    \end{enumerate}
\end{lemma}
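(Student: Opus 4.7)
The plan is to start from a bad $\mathfrak{Y}$ witnessed by a good filter $\mathcal{F}$ and $n \in \N$, first reduce to $n = 0$, and then define $\mathfrak{Y}'$ as the antichain of $\subseteq$-minimal elements of
\[
\mathfrak{W} := \{Y \in [\N]^{<\infty} : \{f \in I_\N^{<\infty} : \im(f)\cap Y = \varnothing\} \subseteq N_{0,\mathfrak{Y}}\}.
\]
The reduction to $n = 0$ works because $N_{0,\mathfrak{Y}} \supseteq N_{n,\mathfrak{Y}}$ lies in $\mathcal{F}$, and any hypothetical finitely wany $S \in \mathcal{F}$ with $S \subseteq N_{0,\mathfrak{Y}}$, once intersected with the $\Itwo$-neighbourhood $\{f : \dom(f)\cap n = \varnothing\}$, would yield a finitely wany element of $\mathcal{F}$ inside $N_{n,\mathfrak{Y}}$, contradicting the badness hypothesis.

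Since $[\N]^{<\infty}$ consists of finite sets, every element of $\mathfrak{W}$ contains a $\subseteq$-minimal one, and $\mathfrak{Y} \subseteq \mathfrak{W}$; combining these facts gives $N_{0,\mathfrak{Y}'} \cap I_\N^{<\infty} = N_{0,\mathfrak{Y}} \cap I_\N^{<\infty}$. Clause (2) is then immediate: the antichain property is built in, and any $Y$ with $\{f \in I_\N^{<\infty} : \im(f) \cap Y = \varnothing\} \subseteq N_{0,\mathfrak{Y}'}$ lies in $\mathfrak{W}$ by the trace equality, so contains a minimal element, i.e., a member of $\mathfrak{Y}'$.

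For clause (3), I would apply the $\Delta$-system lemma: if $[\N]^k \cap \mathfrak{Y}'$ were infinite, an infinite subfamily $\{Y_i\}$ would form a $\Delta$-system with kernel $D$ and pairwise disjoint petals $P_i := Y_i \setminus D$ with $|D| < k$ (since the $Y_i$ are distinct). For $f \in I_\N^{<\infty}$ with $\im(f) \cap D = \varnothing$, the finite set $\im(f)$ avoids some $P_i$, so $\im(f) \cap Y_i = \varnothing$, and $Y_i \in \mathfrak{W}$ puts $f$ in $N_{0,\mathfrak{Y}}$. Hence $D \in \mathfrak{W}$ with $D \subsetneq Y_i$, contradicting the minimality of $Y_i$ in $\mathfrak{Y}'$.

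Clause (1) is the main obstacle. I would take $\mathcal{F}$ to be the given good filter; then $N_{0,\mathfrak{Y}'} \in \mathcal{F}$ because $N_{0,\mathfrak{Y}'} \supseteq N_{0,\mathfrak{Y}}$. Supposing a finitely wany $S = N_{m,\mathfrak{Z}}$ violates the second part, a $\Sym(\N)$-conjugation (valid since $\mathcal{F}$ is closed under the left/right action of $\Sym(\N)$, as these are homeomorphisms of $I_\N$ fixing $\varnothing$) removes the domain restriction on $m$ and shows $\{f \in I_\N^{<\infty} : \im(f) \cap Z = \varnothing\} \subseteq N_{0,\mathfrak{Y}'}$ for each $Z \in \mathfrak{Z}$; by clause (2) each $Z$ contains some $Y'_Z \in \mathfrak{Y}'$. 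Setting $\mathfrak{Y}_0 := \{Y'_Z : Z \in \mathfrak{Z}\}$, a finite subfamily of $\mathfrak{Y}'$, one gets $S \subseteq N_{m,\mathfrak{Y}_0}$ on all of $I_\N$. Combining this with \cref{inferror_implies_finite_error} (so that the good filter generated by the finitely wany $N_{m,\mathfrak{Y}_0}$, which by \cref{wany} is defined by a waning function, sits inside $\mathcal{F}$ once its trace does) and a semigroup-continuity argument in the spirit of \cref{wany} would produce a finitely wany element of $\mathcal{F}$ inside $N_{0,\mathfrak{Y}}$, contradicting badness. The delicate point is exactly the last step: upgrading trace-level inclusion to full inclusion on $I_\N$ is where the real work is concentrated.
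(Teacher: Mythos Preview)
Your treatment of clauses (2) and (3) is correct. In fact your $\Delta$-system argument for (3) is cleaner than the paper's: they obtain the same finiteness by inductively building finite sets $B_n$ such that every $Y\in\mathfrak{Y}'$ meets $B_n$ in at least $\min\{|Y|,n\}$ points, which requires two auxiliary claims; your sunflower argument does the job in a few lines.

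The genuine gap is in clause (1), and it is not merely ``delicate''---your proposed route does not close. Starting from the original bad $\mathfrak{Y}$ and passing to your $\mathfrak{Y}'$, you need to rule out a \emph{single} finitely wany $S$ with $S\cap I_\N^{<\infty}\in\mathcal{F}|_{I_\N^{<\infty}}$ and $S\cap I_\N^{<\infty}\subseteq N_{0,\mathfrak{Y}'}$. Badness, however, only forbids a finitely wany $S\in\mathcal{F}$ (membership in the full filter, not the trace) with $S\subseteq N_{n,\mathfrak{Y}}$ (actual containment, not containment on finite elements). Your manipulations produce $S\subseteq N_{m,\mathfrak{Y}_0}$ for a finite $\mathfrak{Y}_0\subseteq\mathfrak{Y}'$, but neither $S$ nor $N_{m,\mathfrak{Y}_0}$ is known to lie in $\mathcal{F}$, and $N_{m,\mathfrak{Y}_0}$ need not be contained in $N_{0,\mathfrak{Y}}$ on infinite elements. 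Invoking \cref{inferror_implies_finite_error} does not help here: that lemma compares two good filters with equal traces, whereas from one stray $S$ you have no second good filter to compare with.

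The paper sidesteps this entirely by \emph{not} using the original $\mathfrak{Y}$. Instead it fixes a wany filter base $(N_{i,\mathfrak{Y}_i})_{i\in\N}$ for $\mathcal{F}$ (available by \cref{wany}(i)) and argues: if clause~(1) failed for \emph{every} $\mathfrak{Y}_i$, then for each $i$ there would be a finitely wany $S_i$ with $S_i\cap I_\N^{<\infty}\in\mathcal{F}|_{I_\N^{<\infty}}$ and $S_i\cap I_\N^{<\infty}\subseteq N_{i,\mathfrak{Y}_i}$. These $S_i\cap I_\N^{<\infty}$ then form a \emph{base} for $\mathcal{F}|_{I_\N^{<\infty}}$, so \cref{wany}(iii) yields a good filter $\mathcal{F}'$ defined by a waning function with $\mathcal{F}'|_{I_\N^{<\infty}}=\mathcal{F}|_{I_\N^{<\infty}}$; now \cref{inferror_implies_finite_error} applies and gives $\mathcal{F}'=\mathcal{F}$, so $\mathcal{F}$ has a finitely wany base, contradicting badness. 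Thus some $\mathfrak{A}:=\mathfrak{Y}_i$ already satisfies (1), and one then runs your saturation-and-minimal-elements construction on $\mathfrak{A}$ rather than on $\mathfrak{Y}$. The crucial point you were missing is that \cref{inferror_implies_finite_error} and \cref{wany}(iii) need a whole base of finitely wany trace-sets, which you only get by letting (1) fail across an entire filter base, not from a single counterexample to (1) for your fixed $\mathfrak{Y}'$.
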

\begin{proof}
 We first construct a set satisfying (1), we will then modify this set to satisfy (2) while ensuring it still satisfies (1). The resulting set will satisfy (3) as well.
 Let \(\mathfrak{Y}\) be a bad set and let \(\mathcal{F}\) be a good filter witnessing that \(\mathfrak{Y}\) is bad.
 
 

By \cref{wany}, we can find a sequence \((N_{i, \mathfrak{Y}_i})_{i\in \N}\) of wany sets forming a filter base for \(\mathcal{F}\). 

For a good filter \(\mathcal{F}'\) and a set \(\mathfrak{Y}'\subseteq [\N]^{<\infty}\), let \(P(\mathcal{F}',\mathfrak{Y}')\) be the statement:

\begin{itemize}
    \item \(N_{0, \mathfrak{Y}'}\in \mathcal{F}'\), and there is no finitely wany set \(S\), with \(S\cap I_{\N}^{<\infty}\in \mathcal{F}'|_{I_\N^{<\infty}} \) and
        \(S\cap I_{\N}^{<\infty}\subseteq N_{0, \mathfrak{Y}'}\).
\end{itemize}

Seeking a contradiction, assume that for every \(i\in \N\) the statement \(P(\mathcal{F}, \mathfrak{Y}_i)\) is false. Then for all \(i\in \N\), we can find a finitely wany set \(S_i\) such that \(S_i \cap I_{\N}^{<\infty}\in \mathcal{F}|_{I_\N^{<\infty}}\) and \(S_i \cap I_{\N}^{<\infty}\subseteq N_{0, \mathfrak{Y}_i}\). 
Since $\mathcal{F}$ contains every $\Itwo$ neighbourhood of $\varnothing$, for all \(i\in \N\) 
the set \(U_i:=\makeset{f\in I_\N}{\(\dom(f)\cap i= \varnothing\)}\) belongs to \(\mathcal{F}\).
Hence replacing \(S_i\) by the finitely wany set \(S_i \cap U_i\) if needed, we can assume that \(S_i \cap I_{\N}^{<\infty}\subseteq N_{i, \mathfrak{Y}_i}\) for all \(i\in \N\).
Since $\makeset{N_{i, \mathfrak{Y}_i}}{\(i\in \N\)}$ is a filter base for $\F$, it follows that \(\makeset{S_i\cap I_{\N}^{<\infty}}{\(i\in \N\)}\) is a filter base for \(\mathcal{F}|_{I_\N^{<\infty}}\). By \cref{wany}(3), there is a filter \(\mathcal{F}'\) defined by a waning function such that \(\mathcal{F}'|_{I_\N^{<\infty}}=\mathcal{F}|_{I_\N^{<\infty}}\).
By \cref{inferror_implies_finite_error}, \(\mathcal{F}'=\mathcal{F}\). This is a contradiction as by assumption \(\mathcal{F}\) witnesses that \(\mathfrak{Y}\) is bad, and  \(\mathcal{F}\) has a filter base consisting of finitely wany sets (see \cref{remark}).

Thus we can find a set \(\mathfrak{A}\) such that \(P(\mathcal{F}, \mathfrak{A})\).
We next define a set $\mathfrak{Y}'$ using $\mathfrak{A}$ to satisfy conditions (1) and (2). First, however, let 
\[\mathfrak{A}':=\mathfrak{A} \cup \makeset{Y\in [\N]^{<\infty}}{ \(\makeset{f\in I_\N^{<\infty}}{\(\im(f)\cap Y=\varnothing\)} \subseteq N_{0, \mathfrak{A}}\)}.\]
Note that \(N_{0, \mathfrak{A}}\subseteq N_{0, \mathfrak{A}'}\) and \(N_{0, \mathfrak{A}}\cap I_{\N}^{<\infty}=N_{0, \mathfrak{A}'}\cap I_{\N}^{<\infty}\) and hence \(\mathfrak{A}'\) also satisfies (1). 
Let \(\mathfrak{Y}'\) be the set obtained from \(\mathfrak{A}'\) by removing all elements from \(\mathfrak{A}'\) which are not minimal with respect to containment.
It is easy to see that \(N_{0, \mathfrak{A}'}=N_{0, \mathfrak{Y}'}\). Note that \(\mathfrak{Y}'\) now satisfies (1) and (2). 

In order to show that $\mathfrak{Y}'$  satisfies (3), we need the following auxiliary fact. 
    \begin{claim}\label{claim-y-triple-prime}
   There are pairwise disjoint \(Y_1, \ldots, Y_k \in \mathfrak{Y}'\) such that \(Y\cap \left(\bigcup_{1\leq i\leq k}Y_i\right)\neq \varnothing\) for all \(Y\in \mathfrak{Y}'\).
\end{claim}
\begin{proof}
Note that if \(\varnothing\in \mathfrak{Y}'\), then  \(N_{0,\mathfrak{Y}'} =I_\N=N_{0, \{\varnothing\}}\).
Then the finitely wany set $N_{0,\mathfrak{Y}'}$ belongs to $\mathcal{F}$, and so setting $S =N_{0,\mathfrak{Y}'}$, we obtain that $\mathfrak{Y}'$ doesn't satisfy condition (1). The obtained contradiction implies  \(\varnothing\notin \mathfrak{Y}'\).

Suppose for a contradiction that the claim is false.
 Let \(Y_1\in \mathfrak{Y}'\) be arbitrary.
    If \(Y_1, \ldots, Y_l\in \mathfrak{Y}'\) are defined and pairwise disjoint, then as the claim is false we can find an \(Y_{l+1}\in \mathfrak{Y}'\) disjoint from all of them.
    So we have an infinite sequence of pairwise disjoint sets \(Y_1, Y_2, \ldots \in \mathfrak{Y}'\).
    If \(f\in I_{\N}^{<\infty}\), then \(\im(f)\) intersects at most finitely many of the \(Y_i\), hence \(N_{0,\mathfrak{Y}'}\cap I_{\N}^{<\infty}=I_{\N}^{<\infty}=N_{0,\{\varnothing\}}\cap I_{\N}^{<\infty}\). This contradicts the fact that \(\mathfrak{Y}'\) satisfies (1).
\end{proof}

We show inductively that:

\begin{itemize}
    \item[$\star$] For all \(n\in \N\), there is a finite set \(B_n\subseteq \N\) such that for all \(Y\in \mathfrak{Y}'\), we have \(|Y\cap B_n|\geq \min\{|Y|, n\}\).
\end{itemize}
 Note that this implies that if \(|Y|\leq n\) then \(Y\subseteq B_n\), in particular this will imply \((3)\) since \([\N]^n\cap \mathfrak{Y}'\subseteq \mathcal{P}(B_n)\).
 For the base case of the induction we set \(B_1= \bigcup_{1\leq i\leq k}Y_i\) where the sets $Y_1, \ldots, Y_k$ are those from \cref{claim-y-triple-prime}.

    Suppose inductively that the statement holds for some \(i\in \N\), so the set \(B_i\) is defined. 
    For every subset \(S\) of \(B_i\), let \[\mathfrak{Y}_S':= \makeset{Y\setminus B_i}{\(Y\in \mathfrak{Y}'\) and \(Y\cap B_i=  S\)}=\makeset{Y\setminus S}{\(Y\in \mathfrak{Y}'\) and \(Y\cap B_i=  S\)}.\] 

    Let \(S\subseteq B_i\) be arbitrary. If \(\mathfrak{Y}_S'\neq \varnothing\), let \(Y_{1, S}\in \mathfrak{Y}_S'\) (otherwise define \(Y_{1, S}:= \varnothing\)). If \(Y_{1, S}, \ldots, Y_{j, S}\) are defined, then let \(Y_{j+1, S}\in \mathfrak{Y}_S'\) be disjoint from \(Y_{1, S} \cup\ldots \cup Y_{j, S}\) (if this is impossible define \(Y_{j+1, S}:= \varnothing\)). 

\begin{claim}
    \(\bigcup_{j=1}^{\infty} Y_{j, S}\) is finite.
\end{claim}
\begin{proof}
Suppose for a contradiction that the set is infinite. It follows that \( Y_{j, S} \neq \varnothing\) for infinitely many \(j\). Without loss of generality suppose that every $Y_{j, S}$ is nonempty.
Note that, for all $j$,  $Y_{j, S} \cup S \in \mathfrak{Y}'$, 
and \(Y_{j, S}\cap B_i = \varnothing\).

 We show that 
\(\makeset{f\in I_\N^{<\infty}}{\(\im(f)\cap S=\varnothing\)} \subseteq N_{0, \mathfrak{Y}'}\) (so we can apply part (2) to $S$).
Let \(f\in I_{\N}^{<\infty}\) be such that \(\im(f) \cap S= \varnothing\).
Since \(\im(f)\) is finite and the sets \(Y_{j, S}\) are pairwise disjoint, only finitely many sets \(Y_{j, S}\) have nonempty intersection with \(\im(f)\). In other words, there exists \(j\in \N\) such that \(Y_{j, S} \cap \im(f)=\varnothing\).
It follows that \((Y_{j, S}\cup S) \cap \im(f)=\varnothing\), so \(f\in N_{0, \mathfrak{Y}'}\).
By (2) (using \(S\) as \(Y\)), there is a subset \(Y'\) of \(S\) belonging to \(\mathfrak{Y}'\).
    
Let \(Y\in \mathfrak{Y}_S'\) be arbitrary. Then \(Y'\subseteq Y\cup S\in \mathfrak{Y}'\)
    and so $Y', Y\cup S\in \mathfrak{Y}'$. But elements of \(\mathfrak{Y}'\) are incomparable  and so \(Y' \subseteq S \subseteq Y\cup S = Y'\).
    Since $Y$ and $S$ are disjoint, 
    $Y=\varnothing$ which shows that \(\mathfrak{Y}_S'\subseteq \{\varnothing\}\). This contradicts the assumption that the sets $Y_{j, S}$ are non-empty, as required.
\end{proof}

If \(Y\in \mathfrak{Y'}\) and \(Y\cap B_i= S\), then we have by construction that either \(Y\subseteq B_i\) or 
\[Y\cap \bigcup_{j=1}^{\infty} Y_{j, S} \neq \varnothing.\]
Thus the set 
\[B_{i+1}=B_i \cup \bigcup_{S\subseteq B_i}\bigcup_{j=1}^{\infty} Y_{j, S}\]
 satisfies $\star$.
   \end{proof}
   \begin{lemma}\label{bad_don't_exist}
       Bad sets do not exist. 
   \end{lemma}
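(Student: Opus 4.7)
The plan is to derive a contradiction from the assumption that a bad set exists by combining the continuity of multiplication in the ambient semigroup topology with property (3) of the set $\mathfrak{Y}'$ produced by \cref{no bad}.

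Suppose a bad set exists. By \cref{no bad}, we obtain $\mathfrak{Y}' \subseteq [\N]^{<\infty}$ satisfying conditions (1)-(3) with a witnessing good filter $\mathcal{F}$; by \cref{newlabel}, $\mathcal{F}$ is the filter of neighborhoods of $\varnothing$ in a $T_1$ second countable semigroup topology $\Tau$ on $I_\N$ with $\Itwo \subseteq \Tau$. Since $\Tau$ is contained in $\Ifour$ and the $\Itwo$- and $\Ifour$-neighborhood bases at $\id_\N$ both consist of the sets ${\uparrow}\id_n$, so does the $\Tau$-basis at $\id_\N$. Continuity of multiplication at $(\varnothing, \id_\N) = \varnothing$ in $(I_\N, \Tau)$ then yields $W \in \mathcal{F}$ and $n \in \N$ with $W \cdot {\uparrow}\id_n \subseteq N_{0, \mathfrak{Y}'}$, and \cref{wany} produces a wany set $N_{k, \mathfrak{Z}} \in \mathcal{F}$ contained in $W$, with $k \geq n$.

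To force the contradiction, I fix any $Z \in \mathfrak{Z}$ and construct $g \in I_\N$ with $\dom(g) = \N \setminus k$ and $\im(g) = \N \setminus Z$, so $g \in N_{k, \mathfrak{Z}}$. For any permutation $\tau \in \Sym(\N) \cap {\uparrow}\id_n$, we have $\im(g\tau) = \N \setminus \tau(Z)$, and $g\tau \in N_{0, \mathfrak{Y}'}$ is equivalent to the existence of $Y \in \mathfrak{Y}'$ with $Y \subseteq \tau(Z) = (Z \cap n) \cup \tau(Z \setminus n)$. Any such $Y$ has $|Y| \leq |Z|$, and hence belongs to the finite set $\mathcal{Y}_{|Z|} := \{Y \in \mathfrak{Y}' : |Y| \leq |Z|\}$ (by (3)). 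Letting $L := \bigcup \mathcal{Y}_{|Z|}$ and choosing $\tau$ such that $\tau(Z \setminus n) \subseteq \N \setminus (n \cup L)$ --- possible since $L$ is finite and $\tau|_{\N \setminus n}$ can be an arbitrary permutation --- forces any $Y \in \mathcal{Y}_{|Z|}$ with $Y \subseteq \tau(Z)$ to satisfy $Y \subseteq Z \cap n \subseteq n$.

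The main obstacle, and the hardest part of the argument, is to ensure that no $Y \in \mathfrak{Y}'$ is contained in $n$, since then the hypothetical $Y$ in the preceding paragraph would be forced to be $\varnothing$, contradicting $\varnothing \notin \mathfrak{Y}'$. This is handled via the $\Sym(\N)$-invariance of $\mathcal{F}$: for each $\pi \in \Sym(\N)$, the map $f \mapsto f\pi$ is a $\Tau$-homeomorphism fixing $\varnothing$, so $\pi(\mathfrak{Y}')$ also satisfies conditions (1)-(3) with respect to the same $\mathcal{F}$. Since the finite set $A := \bigcup\{Y \in \mathfrak{Y}' : |Y| \leq n\}$ (finite by (3)) can be mapped into $\N \setminus n$ by a suitable $\pi$, the relabeled $\pi(\mathfrak{Y}')$ contains no element of $\mathfrak{Y}'$ lying in $n$; a careful iteration of the relabeling and re-application of continuity, which terminates because property (3) keeps the relevant finite sets bounded at every step, completes the proof.
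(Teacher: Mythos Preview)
Your argument is correct and clean up through the construction of $g$ and the analysis showing that, for suitable $\tau \in \Sym(\N)\cap{\uparrow}\id_n$, any witnessing $Y\in\mathfrak{Y}'$ with $Y\subseteq (Z)\tau$ must lie inside $Z\cap n$. The genuine gap is in the final paragraph. You recognise the difficulty correctly: the integer $n$ produced by the continuity argument depends on the set $N_{0,\mathfrak{Y}'}$, so once you relabel $\mathfrak{Y}'$ by some $\pi\in\Sym(\N)$ to push small elements out of $n$, you must re-apply continuity to $N_{0,(\mathfrak{Y}')\pi}$, obtaining a \emph{new} integer $n'$. Nothing prevents $n'$ from being larger than $n$, and the elements of $(\mathfrak{Y}')\pi$ of size between $n$ and $n'$ may well meet $n'$. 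Your claim that ``a careful iteration \ldots\ terminates because property (3) keeps the relevant finite sets bounded at every step'' is unsupported: property~(3) bounds the number of sets of each fixed size, but does not bound the growth of $n$ across iterations, so no termination argument is visible.

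This circularity between the relabeling and the continuity parameter is exactly what the paper's proof confronts, and it is resolved not by iteration but by a Baire category argument on $\Sym(\N)$. The paper fixes a decreasing wany filter base $(N_{i,\mathfrak{Y}_i})_{i\in\N}$ for $\mathcal{F}$, and for each $\phi\in\Sym(\N)$ chooses $i_\phi$ with $N_{i_\phi,\mathfrak{Y}_{i_\phi}}\subseteq N_{0,\mathfrak{Y}'}\phi$. Since $\Sym(\N)=\bigcup_j\{\phi:i_\phi=j\}$ and $\Sym(\N)$ is Baire, some level $j$ is dense on a basic open set; this yields a \emph{single} index $j$ together with a whole coset of a pointwise stabiliser of a finite set $F$ on which $N_{j,\mathfrak{Y}_j}\subseteq N_{0,\mathfrak{Y}'\theta}\phi$. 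That uniform $j$ plays the role your $n$ cannot: it is fixed once and for all, while one still has enough freedom in $\phi$ (namely, all permutations fixing $F$ pointwise) to move the finite set $B=\bigcup\{Y\in\mathfrak{Y}'\theta:|Y|\le|Y_0|\}$ off a chosen $Y_0\in\mathfrak{Y}_j$. The contradiction is then extracted from condition~(2) of \cref{no bad}. In short, you have the right instinct to exploit $\Sym(\N)$-invariance, but the missing idea is to replace the ad hoc iteration by a Baire category step that produces the needed uniformity in one stroke.
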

   \begin{proof}
   Suppose for a contradiction that bad sets do exist, and let $\mathfrak{Y}$ be such a set. Let $\mathfrak{Y}'$ be the set from  \cref{no bad}, and let \(\mathcal{F}\) be the good filter from \cref{no bad}(1).
   By \cref{wany}(1), we may suppose that
    \[N_{1, \mathfrak{Y}_1}\supseteq N_{2,\mathfrak{Y}_2} \supseteq \cdots\]
    is a filter base for \(\mathcal{F}\) for some sets $\mathfrak{Y}_i\subseteq [\N]^{<\infty}$.
     Since \(\mathcal{F}\) is the set of neighbourhoods of \(\varnothing\) in a semigroup topology on \(I_\N\) and composition with an element of $\Sym(\N)$ is a homeomorphism, it follows that 
    \(\mathcal{F} \phi = \mathcal{F}\) for all \(\phi\in \Sym(\N)\),
    where 
    \[
    \mathcal{F}\phi = \{Af : A\in \mathcal{F}\}.\]  
    In particular, since $N_{0, \mathfrak{Y}'}\in \mathcal{F}$ it follows that $N_{0, \mathfrak{Y}'}\phi\in \mathcal{F}$ for any \(\phi\in \Sym(\N)\).
    Since the sets $N_{i, \mathfrak{Y}_i}$ are a base for $\mathcal{F}$, there exists 
     \(i_{\phi}\in \N\) such that \(N_{i_{\phi},\mathfrak{Y}_{i_\phi}}\subseteq N_{0,\mathfrak{Y}'} \phi\).   For all \(j\in \N\), let \(S_j:= \makeset{\phi\in \Sym(\N)}{\(i_{\phi}= j\)}\).
    Note that 
    \[\Sym(\N)= \bigcup_{i\in \N} S_i.\]
    As \(\Sym(\N)\) is Baire (under the pointwise topology), 
    at least one of the sets $S_j$ is not nowhere dense. In other words, 
    the closure of $S_j$ contains a neighbourhood of some $\theta \in \Sym(\N)$.

    
    If \(\phi \in \theta^{-1}S_j\), then $\theta\phi\in S_j$ and so from the definition of $S_j$, 
    \begin{equation}\label{eq-name}\tag{$\star$}
    \begin{array}{rcl}
    N_{j, \mathfrak{Y}_j} \subseteq N_{0, \mathfrak{Y}'}\theta\phi
    & =&  \{f\in I_\N: \exists Y \in \mathfrak{Y}', \im(f)\cap Y= \varnothing  \}\theta\phi \\
    & = & \{f\in I_\N: \exists Y \in \mathfrak{Y}', \im(f)\cap Y\theta= \varnothing  \}\phi \\
    & = &
    N_{0, \mathfrak{Y}'\theta}\phi.
    \end{array}
    \end{equation}
    Also \(\mathfrak{Y}'\theta\) satisfies the conditions of \cref{no bad} with respect to the same \(\mathcal{F}\) (as \(\mathcal{F}\theta= \mathcal{F}\) and \(\mathcal{F}|_{I_\N^{<\infty}}\theta= \mathcal{F}|_{I_\N^{<\infty}}\)).
    Since the closure of $S_j$  contains a neighbourhood of $\theta$, it follows that the closure of 
     \(\theta^{-1}S_j\) contains a neighbourhood of \(\operatorname{id}_\N\). In other words, the closure of \(\theta^{-1}S_j\) contains the pointwise stabilizer \(\makeset{f\in \Sym(\N)}{\(x\in F \Rightarrow (x)f=x\)}\) in \(\Sym(\N)\) of some finite set \(F\).
    \begin{claim}\label{sub-sub-sub}
        There exists \(Y\in \mathfrak{Y}_j\) such that there is no \(Y'\in \mathfrak{Y}'\theta\) with \(Y'\subseteq F\cap Y\).
    \end{claim}
    \begin{proof}
If the claim is false, then for all \(Y\in\mathfrak{Y}_j\) there is \(Y'\in \mathfrak{Y}'\theta\) such that \(Y'\subseteq F\cap Y\). 
Thus 
\[N_{j,\mathfrak{Y}_j}\cap I_{\N}^{<\infty}\subseteq \{f\in I_{\N}^{<\infty}: \im(f)\cap Y'=\varnothing \text{ for some } Y'\in \mathfrak{Y}'\theta\cap \mathcal{P}(F)\}\subseteq
N_{0, \mathfrak{Y}'\theta\cap \mathcal{P}(F)}\subseteq N_{0,\mathfrak{Y}'\theta}.\]
Then the set $S= N_{0, \mathfrak{Y}'\theta\cap \mathcal{P}(F)}$ is finitely wany, $S\cap I_{\N} ^ {<\infty}\in \mathcal{F}|_{I_{\N}^{<\infty}}$, and $S \cap I_{\N} ^ {<\infty}\subseteq N_{0, \mathfrak{Y}'\theta}$. But \cref{no bad}(1) applied to \(\mathfrak{Y}'\theta\) states that no such set $S$ exists, a contradiction.
    \end{proof}
    Let the set \(Y\in \mathfrak{Y}_j\) be as given in \cref{sub-sub-sub}.
    Let \(B\) be the union of all elements of \(\mathfrak{Y}'\theta\) of size at most \(|Y|\). By \cref{no bad}(3), the set $B$ is finite.
    Since $Y\in \mathfrak{Y}_j$, $Y$ is also finite. Hence there exists  
    \(\phi \in \Sym(\N)\) fixing \(F\) pointwise and satisfying \((B\setminus F)\phi \cap Y = \varnothing\).
    Since the closure of \(\theta^{-1}S_j\) contains the pointwise stabilizer of \(F\), we can choose  \(\phi\in\theta^{-1}S_j\).
    From \eqref{eq-name}, \(N_{0, \mathfrak{Y}_j}\subseteq N_{0, \mathfrak{Y}'\theta}\phi\), and \cref{no bad}(2) implies that there is \(Y'\in \mathfrak{Y}'\theta\phi\) such that \(Y'\subseteq Y\). In particular \(Y'\phi^{-1}\in \mathfrak{Y}'\theta\) and has at most \(|Y|\) elements. 
    So \(Y'\phi^{-1}\subseteq B\) implying that $Y'\subseteq B\phi$. Thus  \(Y'\subseteq B\phi \cap Y\).
    But by the choice of \(\phi\), \(B\phi \cap Y \subseteq F\).
    So \(Y'\subseteq F\cap Y\) and since $\phi$ fixes $F$ pointwise, $Y'\phi^{-1} = Y'\subseteq F\cap Y$.
    This is a contradiction since, as was mentioned before, $Y'\phi^{-1}\in\mathfrak{Y}'\theta$ and \(Y\) was chosen to satisfy the property given in \cref{sub-sub-sub}. 
    \end{proof}

    \begin{corollary}\label{finite_done}
        If \(\mathcal{F}\) is good, then it can be defined by a waning function.
    \end{corollary}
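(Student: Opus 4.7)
The plan is to combine Theorem \ref{wany} with Lemma \ref{bad_don't_exist} in a direct way. Given a good filter $\mathcal{F}$, I would first invoke \cref{wany}(i) to extract a filter base of $\mathcal{F}$ consisting of wany sets, say $(N_{n_i,\mathfrak{Y}_i})_{i\in\N}$.

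Next, for each such basic set $N_{n_i,\mathfrak{Y}_i}$, the key observation is that \cref{bad_don't_exist} says $\mathfrak{Y}_i$ is not bad. Unpacking the definition of a bad set, this means that for every $i\in\N$ there must exist a finitely wany set $S_i\in\mathcal{F}$ with $S_i\subseteq N_{n_i,\mathfrak{Y}_i}$. (Note the definition of ``bad'' uses some $n\in \N$, so strictly speaking one needs to observe that if $\mathfrak{Y}_i$ were genuinely bad when witnessed at index $n_i$, we would get the required failure; but since no set is bad at any index, we get the $S_i$ we want.)

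Having produced the $S_i$, I would argue that $(S_i)_{i\in\N}$ is itself a filter base for $\mathcal{F}$ consisting of finitely wany sets: indeed $S_i\in\mathcal{F}$ and $S_i\subseteq N_{n_i,\mathfrak{Y}_i}$, so any member of $\mathcal{F}$ contains some $N_{n_i,\mathfrak{Y}_i}$ which in turn contains some $S_j$ (after replacing $i$ by a large enough index so that $N_{n_j,\mathfrak{Y}_j}\subseteq N_{n_i,\mathfrak{Y}_i}$, using that the $N_{n_i,\mathfrak{Y}_i}$ themselves are a base). Finally, applying \cref{wany}(ii) to this filter base of finitely wany sets yields that $\mathcal{F}$ is defined by a waning function, which is exactly the corollary.

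The proof is essentially a one-line chaining of prior results, so I do not expect any real obstacle; the only minor care needed is to check that the reduction from ``$\mathfrak{Y}_i$ not bad'' to ``there exists a finitely wany $S_i\in\mathcal{F}$ inside $N_{n_i,\mathfrak{Y}_i}$'' is formally correct, and to verify that the resulting $S_i$ really do form a base (not merely lie in $\mathcal{F}$). Both are routine from the definitions.
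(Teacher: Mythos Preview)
Your proposal is correct and follows essentially the same approach as the paper's proof: invoke \cref{wany}(i) to get a wany filter base, use \cref{bad_don't_exist} to find a finitely wany set in $\mathcal{F}$ below each basic set, observe that these form a filter base, and conclude with \cref{wany}(ii). Your unpacking of the negation of ``bad'' is exactly the point, and your remark that the resulting family is a base (not merely a subset of $\mathcal{F}$) is the only thing one needs to check beyond quoting the lemmas.
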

    \begin{proof}
        By \cref{wany}(1), \(\mathcal{F}\) has a filter base $\mathscr{B}$ consisting of wany sets. By \cref{bad_don't_exist}, every set $B\in \mathscr{B}$ is not bad, and so there exists a finitely wany set $A_{B}\in \mathcal{F}$ such that $A_B\subseteq B$. Hence the collection $\{A_{B}: B \in \mathscr{B}\}$ of finitely wany sets is a filter base for $\mathcal{F}$ as well.
        Thus by \cref{wany}(2), \(\mathcal{F}\) can be defined by a waning function.
     \end{proof}

\begin{definition}\label{de-wan-func}
    Let \(\mathcal{T}\) be a second countable semigroup topology on \(I_\N\) containing \(\Itwo\), with filter data \((\mathcal{F}_{\Tau,i})_{i\in \omega}\). By \cref{de-data}, every $\mathcal{F}_{\Tau,i}$ is good, and so, by Corollary \ref{finite_done}, there exists a waning function $f_i: \omega + 1 \to \omega + 1$ such that $\mathcal{F}_{\Tau,i}$ is the set of all neighbourhoods of $\varnothing$ in the topology $\mathcal{T}_{f_i}$.

    We define a function \(f_\mathcal{T}:(\omega +1) \to (\omega +1)\) by 
\begin{enumerate}
    \item \((i)f_{\mathcal{T}}= (0)f_i\) for all \(i\in \omega\);
    \item \((\omega)f_{\mathcal{T}}= \min\{(0)f_{\mathcal{T}},(1)f_{\mathcal{T}}, \ldots\}\).
\end{enumerate}
\end{definition}

One might think that $f_{\mathcal{T}}$ depends on the choice of the waning functions $f_i$ in \cref{de-wan-func}. However, if  \(f, g:\omega+1 \to \omega +1\) are waning functions, then the topologies $\mathcal{T}_{f}$ and $\mathcal{T}_g$ have the same neighbourhoods of \(\varnothing\) if and only if \((0)f=(0)g\) (by \cref{cor-nbhds-zero}). Hence if $f_i$ and $g_i$ are waning functions such that $\mathcal{F}_i$ is the set of neighbourhoods of $\varnothing$ in $\mathcal{T}_{f_i}$ and $\mathcal{T}_{g_i}$, then $(0)f_i = (0)g_i$. It follows that $f_{\mathcal{T}}$ is independent of the choice of the $f_i$ in \cref{de-wan-func}.

\begin{theorem}\label{Thm_peaking}
    If \(\mathcal{T}\) is a second countable semigroup topology on \(I_\N\) containing \(\Itwo\), 
    then \(f_{\mathcal{T}}\) is a waning function.
\end{theorem}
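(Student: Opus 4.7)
The plan is to verify the three defining properties of a waning function in \cref{definition-waning}: non-increasing on $\omega+1$; either constant with value $\omega$ or taking a finite value at some $i \in \omega$; and strict decrease at every $j \in \omega$ with $0 \neq (j)f_\mathcal{T} \in \omega$.

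Non-increasingness is immediate from what has already been established. By \cref{increasing_neighbourhoods}, $\mathcal{F}_{\mathcal{T},i} \subseteq \mathcal{F}_{\mathcal{T},i+1}$, so \cref{cor-nbhds-zero} applied to the waning functions $f_i, f_{i+1}$ (chosen so that $\mathcal{F}_{\mathcal{T},i}$, $\mathcal{F}_{\mathcal{T},i+1}$ are the neighbourhood filters of $\varnothing$ in $\mathcal{T}_{f_i}$, $\mathcal{T}_{f_{i+1}}$) yields $(i)f_\mathcal{T} = (0)f_i \geq (0)f_{i+1} = (i+1)f_\mathcal{T}$. At the top, $(\omega)f_\mathcal{T} \leq (i)f_\mathcal{T}$ for all finite $i$ is built into \cref{de-wan-func}. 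If every $(i)f_\mathcal{T}$ equals $\omega$ then $(\omega)f_\mathcal{T}=\omega$ and $f_\mathcal{T}$ is constant $\omega$; otherwise some $(i)f_\mathcal{T}$ is already finite, so the second clause of \cref{definition-waning} holds.

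The main obstacle is the strict decrease: if $j \in \omega$ and $0 < k := (j)f_\mathcal{T} < \omega$, then $(j+1)f_\mathcal{T} < k$. Writing $k' := (j+1)f_\mathcal{T} = (0)f_{j+1}$, the case $k' = 0$ is trivial, so assume $k' \geq 1$. The idea is to exploit continuity of multiplication in $\mathcal{T}$ at the pair $(b, a)$ with
\[
a := \id_j \cup \{(j+1, j)\}, \qquad b := \id_j \cup \{(j, j+2)\},
\]
both of size $j+1$, satisfying $ba = \id_j$. Set $r := k' + 2 \geq 3$ and let $N$ be the basic neighbourhood $\{h \supseteq \id_j : \dom(h) \cap [j, j+r) = \varnothing,\ |\im(h) \cap [j, j+r)| \leq k\}$ of $\id_j$ in $\mathcal{T}$ from \cref{neighbourhood_basis}. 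Pulling back $W_{f_{j+1}, \varnothing, s}$ via $d_a^{-1}$ and $d_b^{-1}$ (which are homeomorphisms onto the open sets ${\uparrow}a$ and ${\uparrow}b$) yields nested basic neighbourhoods $U_s$ of $a$ and $V_s$ of $b$ in $\mathcal{T}$, and continuity of multiplication provides some $s \geq r$ with $V_s U_s \subseteq N$. Unwinding the definitions of $d_a$ and $d_b$ shows
\[
U_s = \{h \supseteq a : \dom(h) \cap \{j, j+2, \ldots, j+s\} = \varnothing,\ |\im(h) \cap \{j+1, \ldots, j+s\}| \leq k'\},
\]
\[
V_s = \{h \supseteq b : \dom(h) \cap \{j+1, \ldots, j+s\} = \varnothing,\ |\im(h) \cap T(s)| \leq k'\},
\]
where $T(s) = \{j, j+1, j+3, j+4, \ldots, j+s+1\}$ (valid for $s \geq 3$).

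The final step is to exhibit specific $v \in V_s$ and $u \in U_s$ with $|\im(vu) \cap [j, j+r)| = k' + 1$; together with $vu \in N$ this forces $k' + 1 \leq k$. Pick distinct $z_1, \ldots, z_{k'} \in \{j+1, \ldots, j+r-1\}$ (possible as $r - 1 = k' + 1$) and define
\[
u := a \cup \{(j+s+1+i,\ z_i) : 1 \leq i \leq k'\},
\]
\[
v := b \cup \{(j+s+2k'+2,\ j+1)\} \cup \{(j+s+2k'+2+i,\ j+s+1+i) : 1 \leq i \leq k'\}.
\]
Membership checks are routine: $u$ saturates its $k'$-mistake budget inside $\{j+1,\ldots,j+s\}$, while $v$ uses exactly one mistake in $T(s)$, namely at $j+1$, since every additional image value $j+s+1+i$ for $i \geq 1$ exceeds $\max T(s) = j+s+1$. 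A direct computation gives
\[
vu = \id_j \cup \{(j+s+2k'+2,\ j)\} \cup \{(j+s+2k'+2+i,\ z_i) : 1 \leq i \leq k'\},
\]
so $\im(vu) \cap [j, j+r) = \{j, z_1, \ldots, z_{k'}\}$ has exactly $k' + 1$ elements, as required. The main subtlety is the careful unwinding of $d_a$ and $d_b$ to obtain the explicit forms of $U_s$ and $V_s$; once this is set up, everything else is bookkeeping.
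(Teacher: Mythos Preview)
Your proof is correct, with one inconsequential slip: the set $T(s)$ should be $\{j, j+1, j+3, \ldots, j+s\}$ (with maximum $j+s$, so $s$ elements), since $i_{\im(b)}(\{0,\ldots,s-1\})$ has $s$ elements and $i_{\im(b)}(s-1) = j+s$ for $s \geq 3$. Your stated $T(s)$ has one extra element $j+s+1$; this does not damage the argument, because with the larger $T(s)$ your $V_s$ coincides with the pullback of $W_{f_{j+1}, \varnothing, s+1}$ rather than of $W_{f_{j+1}, \varnothing, s}$, which is still a basic neighbourhood of $b$, and your membership check for $v$ goes through verbatim. Also, the reference to \cref{neighbourhood_basis} for $N$ being a $\mathcal{T}$-neighbourhood of $\id_j$ is mislabelled: that lemma concerns topologies of the form $\mathcal{T}_f$, whereas here you need the filter-data mechanism you (correctly) invoke for $U_s$ and $V_s$, namely $N = (W_{f_j, \varnothing, r})d_{\id_j}^{-1}$ and $W_{f_j, \varnothing, r} \in \mathcal{F}_{\mathcal{T}, j}$.

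Your route to the strict-decrease step is genuinely different from the paper's. The paper stays inside a single open set: it takes an open $U$ with $\id_i \in U \subseteq N$, uses $(i)f_\mathcal{T} > 0$ to locate some $g \in U$ with $|g|=i+1$ and $i \in \im(g)$, and then reads off $(i+1)f_\mathcal{T} \leq (i)f_\mathcal{T} - 1$ directly from the fact that $N \cap {\uparrow}g$ is a neighbourhood of $g$ (one unit of the mistake budget is already consumed at the point $i$). You instead exploit continuity of the semigroup operation at a carefully chosen factorisation $ba = \id_j$ with $|a|=|b|=j+1$, transporting size-$(j+1)$ neighbourhood data to a size-$j$ constraint via multiplication. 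The paper's argument is shorter and more self-contained; yours makes the role of the semigroup structure explicit and involves no appeal to openness to produce an auxiliary element.
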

\begin{proof}
If $(\mathcal{F}_i)_{i\in\omega}$ is the filter data of $\mathcal{T}$, then by \cref{increasing_neighbourhoods}, $\mathcal{F}_m\subseteq \mathcal{F}_n$ for all $m\leq n$. So, by \cref{cor-nbhds-zero}, if $f_m$ and $f_n$ are waning functions corresponding to $\mathcal{F}_m$ and $\mathcal{F}_n$ (as in \cref{de-wan-func}), then 
$(0)f_m \geq (0)f_n$. In other words, $f_{\mathcal{T}}$ 
 is non-increasing. 

It therefore suffices to show that if \(i\in \N\) and \((i)f_{\mathcal{T}}\in \mathbb{N} \setminus \{0\}\), then \((i+1)f_{\mathcal{T}}<(i)f_\mathcal{T}\).

 \begin{claim}\label{neighbouthood+check}
  If \(g\in I_\N^{<\infty}\) and \(N\subseteq I_\N\), then \(N\) is a neighbourhood of \(g\) with respect to \(\Tau\) if and only if \(N\) contains the sets
  \[\makeset{h\in I_\N}{\(h{\restriction}_r= g{\restriction}_r\) and \(|\im(h)\cap (r\setminus \im(g))|\leq (|g|)f_\mathcal{T}\)}\]
  for all but finitely many \(r\in \N\).
 \end{claim}
 \begin{proof}
 Suppose that  \(g\in I_{\N}^{<\infty}\). 
Then by \cref{good_filter_def} and \cref{filter_independant_of_size},
\[\mathcal{F}_{|g|}=
\mathcal{F}_{\mathcal{T}, g}
= 
\{ (N\cap {\uparrow}g)d_{g} :N\text{ is a neighbourhood of } g \text{ with respect to }\Tau\}.
\] Hence $N$ is a neighbourhood of $g$ if and only if $(N\cap {\uparrow}g)d_{g}\in\mathcal{F}_{|g|}$. 
But $\mathcal{F}_{|g|}$ is the set of all neighbourhoods of $\varnothing$ in $\mathcal{T}_{f_{|g|}}$ for some waning function $f_{|g|}$ (as in \cref{de-wan-func}). 
By the definition of $f_{\Tau}$, \((|g|)f_{\mathcal{T}} = (0)f_{|g|}\), and so \cref{neighbourhood_basis} implies that 
 \(N\in \mathcal{F}_{|g|}\) if and only if \(N\) contains a set of the form
\[W_{f_{|g|}, \varnothing, r} = \makeset{h\in I_\N}{\(h{\restriction}_r= \varnothing\) and \(|\im(h)\cap r|\leq (0)f_{|g|}=(|g|)f_\mathcal{T}\)}\]
for some large enough \(r\in \N\). 
It follows that 
 \(N\) is a neighbourhood of \(g\) (with respect to \(\Tau\)) if and only if \((N\cap {\uparrow}g)d_g\) contains a set $W_{f_{|g|}, \varnothing, r}$ for large enough $r$.
 
We claim that the set
 \((N\cap {\uparrow}g)d_g\) contains a set $W_{f_{|g|}, \varnothing, r}$ for all but finitely many $r\in \N$
 if and only if
 \(N\) contains a set of the form
\[\makeset{h\in I_\N}{\(h{\restriction}_r= g{\restriction}_r\) and \(|\im(h)\cap (r\setminus \im(g))|\leq (|g|)f_\mathcal{T}\)}\]
for all but finitely many \(r\in \N\), as we can check that if \(r>|g|\) then
     \[\makeset{h\in I_\N}{\(h{\restriction}_r= g{\restriction}_r\) and \(|\im(h)\cap (r\setminus \im(g))|\leq (|g|)f_\mathcal{T}\)}=W_{f_{|g|, \varnothing, r-|g|}}d_g^{-1}.\]
Hence    
      \[\big(\makeset{h\in I_\N}{\(h{\restriction}_r= g{\restriction}_r\) and \(|\im(h)\cap (r\setminus \im(g))|\leq (|g|)f_\mathcal{T}\)}\big )d_g=W_{f_{|g|, \varnothing, r-|g|}}.\]
 \end{proof}
 

By \cref{neighbouthood+check}, the set 
\[N:=\makeset{f\in I_\N}{\(\id_i\subseteq f\) and \(|\im(f)\cap \{i, i+1, \ldots, i+ (i)f\}|\leq (i)f_\mathcal{T}\)}\]
is a neighbourhood of \(\id_i\) with respect to \(\Tau\). Let \(U\in \Tau\) be such that \(\id_i\in U\subseteq N\). 
Since \(U\) is a neighbourhood of \(\id_i\) and \((i)f_{\Tau}\neq 0\), we can find \(g\in U\) such that \(\im(g)=i+1\). As \(U\) is open, it is also a neighbourhood of \(g\). So \(N\) is a neighbourhood of \(g\).

It follows that
\begin{align*}
    N'&:=\makeset{f\in I_\N}{\(g\subseteq f\) and \(|\im(f)\cap \{i, i+1, \ldots, i+ (i)f_\mathcal{T}\}|\leq (i)f_\mathcal{T}\)}\\
    &= \makeset{f\in I_\N}{\(g\subseteq f\) and \(|\im(f)\cap \{ i+1, \ldots, i+ (i)f_\mathcal{T}\}|\leq (i)f_\mathcal{T}-1\)}
\end{align*}
is a neighbourhood of \(g\) with respect to \(\Tau\). Since \(|g|=i+1\), it follows that \((i+1)f_\mathcal{T}\leq (i)f_\mathcal{T}-1\) as required.
\end{proof}

\begin{theorem}
    If \(\mathcal{T}\) is a second countable semigroup topology on \(I_\N\) containing \(\Itwo\), then \(\mathcal{T}=\mathcal{T}_{f_{\mathcal{T}}}\). In particular every second countable, semigroup topology on \(I_\N\) containing \(\Itwo\) can be defined by a waning function.
\end{theorem}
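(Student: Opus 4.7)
The plan is to show $\mathcal{T}=\mathcal{T}_{f_{\mathcal{T}}}$ by verifying that the two topologies induce the same neighborhood filter at every $g\in I_\N$; since a topology is determined by its neighborhood system, this yields equality.

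For finite $g\in I_\N^{<\infty}$, the claim proved inside the proof of \cref{Thm_peaking} characterizes the $\mathcal{T}$-neighborhoods of $g$ as exactly the sets containing $W_{f_{\mathcal{T}},g,r}$ for all but finitely many $r\in\N$. By \cref{neighbourhood_basis}, these same sets form a neighborhood basis of $g$ in $\mathcal{T}_{f_{\mathcal{T}}}$, so $\mathcal{T}$ and $\mathcal{T}_{f_{\mathcal{T}}}$ have the same neighborhood filter at every finite element.

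For infinite $h\in I_\N$, since $f_{\mathcal{T}}$ is waning (\cref{Thm_peaking}) we have $(|h|)f_{\mathcal{T}}=(\omega)f_{\mathcal{T}}\in\{0,\omega\}$. By \cref{remarknew}, the $\mathcal{T}_{f_{\mathcal{T}}}$-neighborhoods of $h$ coincide with \Ifour-neighborhoods of $h$ if $(\omega)f_{\mathcal{T}}=0$, and with \Itwo-neighborhoods of $h$ if $(\omega)f_{\mathcal{T}}=\omega$ (in which case $\mathcal{T}_{f_{\mathcal{T}}}=\Itwo$). The sandwich $\Itwo\subseteq\mathcal{T}\subseteq\Ifour$ (the right-hand inclusion from Theorem~5.1.5(vii) of~\cite{main}) gives one direction of the agreement for free in each subcase: \Itwo-neighborhoods of $h$ are automatically $\mathcal{T}$-neighborhoods, and $\mathcal{T}$-neighborhoods of $h$ are automatically \Ifour-neighborhoods.

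The main obstacle is the reverse direction at infinite $h$, which must be handled by combining continuity of multiplication in $\mathcal{T}$ with the already-established finite-level agreement. Factor $h=\id_{\dom(h)}\cdot h$; by continuity there are $\mathcal{T}$-open $V_1\ni\id_{\dom(h)}$ and $V_2\ni h$ with $V_1V_2$ inside any prescribed $\mathcal{T}$-open neighborhood of $h$. Since $\id_F\to\id_{\dom(h)}$ in $\Ifour$ (and hence in $\mathcal{T}\subseteq\Ifour$) for finite $F\subseteq\dom(h)$, one can choose $F$ with $\id_F\in V_1$ large enough that the finite restriction $g':=h|_{\max(F)+1}$ satisfies $(|g'|)f_{\mathcal{T}}=(\omega)f_{\mathcal{T}}$. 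The finite-element characterization of $\mathcal{T}$-neighborhoods applied to $g'$ then identifies them with \Itwo- or \Ifour-neighborhoods of $g'$, and combining this with the identity $\id_F\cdot V_2=\{k|_F:k\in V_2\}\subseteq V_1V_2$ and the dual factorization $h=h\cdot\id_{\im(h)}$ propagates the required identification from $g'$ up to $h$, completing the proof.
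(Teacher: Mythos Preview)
Your plan for the finite elements is exactly right and matches the paper: \cref{neighbouthood+check} together with \cref{neighbourhood_basis} shows $\mathcal T$ and $\mathcal T_{f_{\mathcal T}}$ have the same neighbourhood filter at every $g\in I_\N^{<\infty}$.

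The gap is in your treatment of infinite $h$, where the two subcases require genuinely different mechanisms and your sketch conflates them.

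In the subcase $(\omega)f_{\mathcal T}=0$ you must show that every $\Ifour$-neighbourhood of $h$ is a $\mathcal T$-neighbourhood. Your proposed step ``by continuity there are $\mathcal T$-open $V_1,V_2$ with $V_1V_2$ inside any prescribed $\mathcal T$-open neighbourhood'' presupposes you are starting from a $\mathcal T$-open set; but here you are starting from an $\Ifour$-open set whose $\mathcal T$-openness is precisely what is at issue, so the continuity-of-multiplication argument as written is circular. The paper avoids this by a different device: for $a\notin\im(h)$ and a finite $g\subseteq h$ with $|g|=N$ and $(N)f_{\mathcal T}=0$, the right-multiplication $\phi:f\mapsto f\cdot\id_{\im(g)\cup\{a\}}$ is $\mathcal T$-continuous on ${\uparrow}g$ and lands in the \emph{finite} elements of size $\ge N$, where $\mathcal T$ and $\Ifour$ already agree. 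Pulling back the $\Ifour$-open set $\{f:a\notin\im(f)\}$ along $\phi$ then exhibits $\{f\in{\uparrow}g:a\notin\im(f)\}$ as $\mathcal T$-open. Your phrase ``the dual factorisation $h=h\cdot\id_{\im(h)}$ propagates the identification'' may be gesturing at this, but as stated it does not supply the key idea of mapping into the finite part.

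In the subcase $(\omega)f_{\mathcal T}=\omega$ you want every $\mathcal T$-neighbourhood $W$ of $h$ to contain an $\Itwo$-neighbourhood of $h$. Your factorisation $h=\id_{\dom(h)}\cdot h$ followed by $\id_F\cdot V_2=\{k{\restriction}_F:k\in V_2\}\subseteq W$ only shows that \emph{restrictions} of elements of $V_2$ lie in $W$; it does not produce an $\Itwo$-basic set around $h$ inside $W$, since $h$ is not such a restriction. The paper instead factors $h=\id_\N\cdot h$, so that the left neighbourhood can be taken of the form ${\uparrow}\id_n$ (these form a base at $\id_\N$ in both $\Itwo$ and $\Ifour$), finds a finite $g'\subseteq h$ in the right factor $U$, uses the finite-level agreement to get an $\Itwo$-basic set $U_{k,g'}\subseteq U$, and then proves the nontrivial inclusion $U_{n,g'}\subseteq({\uparrow}\id_n)\cdot U_{m,g'}$ by an explicit construction. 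This last combinatorial step, which actually produces the $\Itwo$-neighbourhood of $h$, has no analogue in your outline.
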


\begin{proof}
By the definition of \(f_{\mathcal{T}}\), \cref{neighbourhood_basis}, and \cref{neighbouthood+check}, we know that for all \(g\in I_{\N}^{<\infty}\), \(g\) has the same neighbourhoods with respect to \(\mathcal{T}\) as \(\mathcal{T}_{f_{\mathcal{T}}}\) .
Thus we need only consider the neighbourhoods of the infinite elements of \(I_{\N}\).

We split the proof into two cases.
\begin{claim}
    If \((\omega)f_{\mathcal{T}}= 0\), then \(\mathcal{T}=\mathcal{T}_{f_{\mathcal{T}}}\).
\end{claim}
\begin{proof}
Let \(N\in\N\) be such that \((N)f_{\mathcal{T}}= 0\). Let \(a\in \N\) be arbitrary.
 We show that the set 
 \[\makeset{f\in I_\N}{\(N\leq |f|\) and \(a\not\in \im(f)\)}\]
  is open with respect to \(\Tau\).
  Note that this is sufficient as the \(\Ifour\) neighbourhoods of any infinite element of \(I_\N\) are generated by these sets and sets from \(\Itwo\).

  Let \(F_{N,a}:=\makeset{f\in I_\N}{\(|f|=N\) and \(a\notin \im(f)\)}\). Observe that
  
  \[\makeset{f\in I_\N}{\(N\leq |f|\) and \(a\not\in \im(f)\)}=\bigcup_{g\in F_{N, a}} \makeset{f\in I_\N}{\(g\subseteq f\) and \(a\not\in \im(f)\)}.\] 
  Thus we need only show that the sets in the above union are open.

  Endow the set \(\makeset{f\in I_\N}{\(N\leq|f|<\infty\)}\) with the subspace topology inherited from \((I_\N, \Ifour)\) and fix any \(g\in F_{N, a}\). 
  Define \(\phi:\makeset{f\in I_\N}{\(g\subseteq f\)} \to \makeset{f\in I_\N}{\(N\leq|f|<\infty\)}\) by 
  \[(f)\phi= f\circ \operatorname{id}_{\im(g)\cup \{a\}}.\]
  Note that \(\phi\) is continuous. It follows that 
  \[\big(\makeset{f\in I_\N}{\(a\notin \im(f)\)}\big)\phi^{-1}=\makeset{f\in I_\N}{\(g\subseteq f\) and \(a\not\in \im(f)\)}\]
  is open in \(\makeset{f\in I_\N}{\(g\subseteq f\)}\), which is also open in \(\Tau\supseteq \Itwo\) so the result follows.
\end{proof}
  \begin{claim}
      If \((\omega)f_{\Tau}=\omega\), then \(\mathcal{T}=\mathcal{T}_{f_{\mathcal{T}}} = \Itwo\).
  \end{claim}
  \begin{proof}
       
    Let \(W\in \Tau\), and \(g\in W\) be infinite. 
    We need only show that \(W\) is an \(\Itwo\) neighbourhood of \(g\). Consider the product \(\id_\N g=g\). As $\Itwo\subseteq\Tau\subseteq \Ifour$ and $\mathcal B:=\{{\uparrow}\id_n:n\in\N\}$ is an open neighborhood base of $\id_\N$ in both $\Itwo$ and $\Ifour$, we get the family $\mathcal B$ also forms a base at $\id_\N$ in $\Tau$.
    Since $\Tau$ is a semigroup topology, there is \(U\in \Tau\) and \(n\in \N\) such that
    \[({\uparrow} \id_n )U\subseteq W.\]
   Since $U\in\Ifour$, there exists finite \(g'\subseteq g\) such that \(g{\restriction}_{n}=g'{\restriction}_n\) and \(g'\in U\).
    As \(g'\) is finite and $(|g'|)f_\Tau=\w$, it follows that \(U\) is a \(\Itwo\) neighbourhood of \(g'\). So there is some finite \(k\in \N\) such that 
    \[U_{k,g'}:=\makeset{f\in I_\N}{\(f{\restriction}_{k}=g'{\restriction}_k\)}\subseteq U.\]
    In particular
    \[({\uparrow} \id_n )U_{k, g'}\subseteq W.\]
    Let \(m:= \max\{\{k,n\}\cup \dom(g')\cup \im(g')\}+1\). We have
    \[({\uparrow} \id_n )U_{m, g'}\subseteq W.\]
    It therefore suffices to show that \(U_{n, g'}\subseteq ({\uparrow} \id_n )U_{m, g'}\), as $U_{n, g'}$ is open in \(\Itwo\) and contains \(g\).
    Let \(h\in U_{n, g'}\) be arbitrary. Put \(h_1=g{\restriction}_n=g'{\restriction}_n=h{\restriction}_n\) and \(h_2=h\setminus h_1\). We have that 
    \[h=h_1 \cup h_2.\]
    Let \(l:\im(h_2) \to \N\setminus m\) be an injective function.

    Note that 
    \[\varnothing=\dom(\id_n)\cap \dom(h_2l)=\im(\id_n)\cap \im(h_2l)=\dom(l^{-1})\cap \dom(g')=\im(l^{-1})\cap \im(g').\]
    It follows that
    \[h=h_1 \cup h_2= (\id_n \circ g')\cup (h_2l\circ l^{-1})=(\id_n \cup h_2l) \circ (g'\cup l^{-1})\in ({\uparrow} \id_n)U_{m, g'}.\]
    Hence  \(U_{n, g'}\subseteq ({\uparrow} \id_n )U_{m, g'}\).
  \end{proof}
  The two claims above complete the proof of the theorem.
\end{proof}

\section{Proofs of the remaining main results}\label{section-corollaries}
In this section, we prove Theorems~\ref{newone}, \ref{newversion},  and \ref{corollary-order}. We start with Theorem~\ref{newone}, which states that the symmetric inverse monoid $I_\N$ endowed with any \(T_1\) second countable semigroup topology is homeomorphic to the Baire space $\N^\N$.

\begin{proof}[Proof of Theorem~\ref{newone}]
By \cref{Polish}, the spaces $(I_\N,\Tau_{f})$ 
and $(I_\N,\Tau_f ^{-1})$ are Polish and possess a base consisting of clopen sets for every waning function $f$. 
By the Alexandrov-Urysohn Theorem (see, for example,~\cite[Theorem 7.7]{Kechris}), a non-empty Polish space $X$ possessing a base consisting of clopen sets is homeomorphic to the Baire space $\N^\N$ if and only if each compact subset of $X$ has empty interior. Recall that for each waning function $f$ the spaces $(I_\N,\Tau_f)$ and  $(I_\N,\Tau_f^{-1})$ are homeomorphic. By \cref{main-theorem}, it is enough to show that $(I_\N,\Tau_f)$ is homeomorphic to the Baire space for each waning function $f$. 
Since all the topologies \(\Tau_f\) fall between \(\Itwo\) and \(\Ifour\), it suffices to check that the sets which are compact with respect to \(\Itwo\) have empty interior with respect to \(\Ifour\). If this was not the case, then there would be a basic open set $U$ in \(\Ifour\) whose closure in \((I_\N,\Itwo)\) is compact. 
Then there exist $n\in\N$, a finite subset $X$ of $N$ and a finite partial bijection $h: n\to \N\setminus X$ such that 
$$U=\makeset{f\in I_{\mathbb{N}}}{\(f{\restriction}_n=h\) \hbox{ and } \(\im(f)\cap X=\varnothing\)}.$$ 
It is straightforward to check that the set \(U\) is closed with respect to \(\Itwo\) and the open cover \[\mathcal U=\makeset{\makeset{f\in I_{\mathbb{N}}}{\((n)f= m\)}}{\(m\in \N\)} \cup\left\{\makeset{f\in I_{\mathbb{N}}}{\(n\notin \dom(f)\)}\right\}\] has no finite subcover. 
\end{proof}

Recall that $\mathfrak{P}$ denotes the poset of Polish semigroup topologies on $I_{\N}$, with respect to containment, and
$\mathfrak{W}$ the set of all waning functions equipped with the partial order $\preceq$, where $f\preceq g$ if and only if $(n)f\geq (n)g$ for all $n\in\omega+1$.
We proceed with~\cref{newversion}, which states that

(i) the function $f \mapsto \mathcal{T}_f$ is an order-isomorphism from $\mathfrak{W}$
to the subposet of $\mathfrak{P}$ consisting of Polish semigroup topologies which contain $\Itwo$ and are contained in $\Ifour$;

(ii) the function $f \mapsto \mathcal{T}_f ^ {-1}$ is an order-isomorphism from
$\mathfrak{W}$ to the subposet of $\mathfrak{P}$ of Polish semigroup topologies lying between $\Ithree$ and $\Ifour$.

\begin{proof}[Proof of Theorem~\ref{newversion}]

\vspace{\baselineskip}\noindent
(i) \cref{lem-order-preserving} and \cref{newcol} imply that the function $f \mapsto \mathcal{T}_f$ is injective and order preserving. By~\cref{main-theorem}, this function is surjective and, as such, an order-isomorphism from
$\mathfrak{W}$ to the subposet of $\mathfrak{P}$ of Polish semigroup topologies lying between $\Itwo$ and $\Ifour$. 

\vspace{\baselineskip}\noindent
(ii) Observe that $\Itwo^{-1}=\Ithree$, $\Ifour^{-1}=\Ifour$ and $\Tau_f^{-1}\subseteq \Tau_g^{-1}$ if and only if $\Tau_f\subseteq \Tau_g$ for any waning functions $f$ and $g$. At this point the proof of item (ii) is dual to the one of (i).  
\end{proof}

Finally, we prove \cref{corollary-order}, which states that the poset $\mathfrak{P}$ contains the following:
  \begin{enumerate}[\rm (a)]
  \item infinite descending linear orders;
  \item only finite ascending linear orders; and 
  \item every finite partial order.
  \end{enumerate}

\begin{proof}[Proof of~\cref{corollary-order}.]
By \cref{newversion}, it suffices to prove that $\mathfrak{W}$ satisfies $(a), (b)$, and $(c)$. 

\vspace{\baselineskip}
\noindent
\((a)\) For every $n \in \N$, define $f_n: \omega +1 \rightarrow \omega+1$ by $(0)f_n=n$ and $(i)f_n=0$ for all $i>0$. Then each $f_n$ is waning and $m\geq n$ implies that $(i)f_m\geq (i)f_n$ for all $i\in \N$. Thus $(f_n)_{n\in \N}$ is an infinite descending chain in $\mathfrak{W}$.

\vspace{\baselineskip}
\noindent
\((b)\)
Suppose for a contradiction that \((f_i)_{i\in \N}\) is an infinite ascending linear order with respect to \(\mathfrak{W}\). In other words, with respect to the pointwise order, we have \(f_0> f_1>f_2 \ldots\).
Note that as \(f_1<f_0\) with respect to the pointwise order,  there is some \(n\in \mathbb{N}\) such that \((n)f_1\neq \omega\). 

Let \(k=\min \makeset{j\in \N}{\(\exists p\in \mathbb{N}\ (j)f_p\neq \omega\)}\). Let \(p\in \N\) be such that \((k)f_p<\omega\).
Then for all \(i\in \N\) and \(j<k\), we have \((j)f_i=\omega\).

It follows from \(f_p\) being a waning function that \((k+ (k)f_p)f_p = 0\), and hence there is \(m\in \mathbb{N}\) for which \((i)f_p=0\) for all \(i\geq m\).
Observe that there are at most \(((k)f_p+1)((k+1)f_p+1)\ldots ((m)f_p+1)<\infty\) functions in our sequence bounded above pointwise by \(f_p\), which yields a contradiction.

\vspace{\baselineskip}
\noindent
\((c)\) Let \((X, \leq)\) be a finite poset. 
First note that the map 
\(x\mapsto {\downarrow} x\)
is an embedding of \((X, \leq)\) into \((\mathcal{P}(X), \subseteq)\). Moreover the map from \(\mathcal{P}(X)\) to \(\{0, 1\}^X\) which sends each set \(S\) to its characteristic function (\(x\mapsto 1 \iff x\in S\)) is an order isomorphism with respect to the coordinate-wise order on \(\{0, 1\}^X\). So it is sufficient to embed  \(\{0, 1\}^X\) into $\mathfrak{W}$. Assume without loss of generality that \(X = n\) and note that for example the set
\[\makeset{f\in \mathfrak{W}}{\((n)f=0\) and for all \(i\in n\), we have \((i)f\in \{3(n-i) +1, 3(n-i) +2\}\)}\]
is order isomorphic to \(\{0, 1\}^n\), as required.
\end{proof}

\end{document}